\documentclass[a4paper,11pt,oneside]{article}
\usepackage[a4paper, margin=1in]{geometry}
\usepackage{setspace}
\usepackage{textcomp}
\usepackage{enumerate}
\usepackage{amsmath, amssymb,amsfonts,amsthm,mathrsfs}
\usepackage{cancel, cases}
\usepackage{soul}
\usepackage{float,graphicx,subcaption,epstopdf}
\usepackage{longtable,tabu,threeparttable,rotating,adjustbox}
\usepackage[normalem]{ulem}
\usepackage[dvipsnames]{xcolor}
\usepackage{tcolorbox} 



\usepackage{hyperref}

\usepackage{setspace,caption}

\doublespace
\onehalfspacing

\pagestyle{plain}

\newcommand\cF{{\cal F}}

\newcommand\tr{\text{tr}}

\newcommand{\bi}{\begin{itemize}}
\newcommand{\ei}{\end{itemize}}

\theoremstyle{plain}
\newtheorem{theorem}{Theorem}
\newtheorem{corollary}[theorem]{Corollary}
\newtheorem{lemma}[theorem]{Lemma}
\newtheorem{proposition}[theorem]{Proposition}

\theoremstyle{definition}
\newtheorem{definition}[theorem]{Definition}
\newtheorem{assumption}[theorem]{Assumption}

\theoremstyle{remark}
\newtheorem{remark}{Remark}
\newtheorem{notation}{Notation}

\numberwithin{equation}{section}
\numberwithin{theorem}{section}

\DeclareGraphicsRule{.tif}{png}{.png}{`convert #1 `dirname #1`/`basename #1 .tif`.png}

\usepackage[round, authoryear]{natbib}


\title{On the Separation of Estimation and Control in Risk-Sensitive Investment Problems under Incomplete Observation\footnote{The authors wish to express their sincere gratitude to Hideo Nagai for his comments, suggestions, clarifications, and ongoing support.}}

\author{S\'ebastien Lleo\thanks{Finance Department, NEOMA Business School, 59 rue Pierre Taittinger, 51100 Reims, France; Email: sebastien.lleo@NEOMA-bs.fr.} \thanks{Corresponding author.} \and Wolfgang J. Runggaldier\thanks{Departments of Mathematics, University of Padova, Via Trieste 63, 35121 Padova, Italy, and Fellow Institut Louis Bachelier.}}
\date{\today}
\begin{document}
\maketitle

\begin{abstract}
A typical approach to tackle stochastic control problems with partial observation is to separate the control and estimation tasks. However, it is well known that this separation generally fails to deliver an actual optimal solution for risk-sensitive control problems. This paper investigates the separability of a general class of risk-sensitive investment management problems when a finite-dimensional filter exists. We show that the corresponding separated problem, where instead of the unobserved quantities, one considers their conditional filter distribution given the observations, is strictly equivalent to the original control problem. We widen the applicability of the so-called Modified Zakai Equation (MZE) for the study of the separated problem and prove that the MZE simplifies to a PDE in our approach. Furthermore, we derive criteria for separability. We do not solve the separated control problem but note that the existence of a finite-dimensional filter leads to a finite state space for the separated problem. Hence, the difficulty is equivalent to solving a complete observation risk-sensitive problem. Our results have implications for existing risk-sensitive investment management models with partial observations in that they establish their separability. Their implications for future research on new applications is mainly to provide conditions to ensure separability.

\end{abstract}

\textbf{MSC2000}: 93E11, 93E20, 60H99.

\textbf{JEL}: C02, C32, C61, G11.

\textbf{Keywords}: portfolio selection, risk-sensitive control, partial observation, nonlinear filtering, finite-dimensional filter.

\section{Introduction}\label{sec:introduction}

Stochastic control problems with partial observation require a joint estimation of the unobservable state of the system and an optimization of the state-dependent control criterion. This joint estimation-optimization problem is particularly challenging. The go-to solution is to separate the problems into two sequential steps: estimate the state variable via filtering and optimize the criterion with the unobservable state replaced by its estimate \citep{wonhamSeparationTheoremStochastic1968,Lindquist1973,fl75,davis77}. However, in so doing, one may lose full optimality. For instance, the class of risk-sensitive control problems is not separable in general \citep{bevs85,Bensoussan1992}. If one still achieves full optimality we say that a (strict) separation property holds. For example, a few risk-sensitive control problems are separable \citep{na00,nape02} or appear separable \citep{dall_BLcontinuous,dall_BBL_2020,davisRisksensitiveBenchmarkedAsset2021}.

A more complete approach is to pass to the so-called \textit{separated problem} where, in place of the unobserved state variable, one considers its conditional or filter distribution, given the available observations. Yet, little care is usually given to showing that, on the basis of the information provided by the available observations, the so-obtained separated problem is indeed equivalent to the original one under partial observations. In this paper, we deal with this issue as one of our objectives. Even if equivalence holds, there is also the problem that whereas the unobserved state is typically finite-dimensional, the filter distribution is, in general, infinite-dimensional. In a wider sense, one can then consider a separation property to hold if, in the separated problem, the filter distribution can be parametrized by a finite-dimensional parameter process. This is particularly relevant in view of solving the control part of the problem because then these parameters become a finite-dimensional state variable process that can be considered observable because it can be computed on the basis of the observations. Such state variables typically satisfy stochastic dynamic equations. So the control part of the problem can be approached by standard methods for full observation problems, such as an HJB approach.

An alternative approach to solve partially observed control problems, in particular of the risk-sensitive type, results from the work of Nagai and Peng (see \citeauthor{na00}, \citeyear{na00} and \citeauthor{nape02}, \citeyear{nape02}). They introduce what they call a \textit{Modified Zakai Equation (MZE)}. This MZE turns out to be a useful tool since the objective function of the control problem can be expressed in terms of the solution to the MZE.  However, this MZE is a stochastic PDE, so obtaining an explicit solution is generally difficult. Nagai and Peng succeed in deriving an explicit solution for the case of a linear-Gaussian model, thereby also obtaining strict separability. We shall show that in our situation, the MZE reduces to a deterministic PDE.

In the given context, and inspired by the work of Nagai and Peng, the objective of our study is to consider more general nonlinear models for which a finite-dimensional filter exists, characterized by a filter parameter process ${\zeta}_t$. The best-known finite-dimensional filters exist for linear-Gaussian models (Kalman-Bucy filter) and for finite-state Markov processes \citep[for general results, in this case, see][]{elliottNewFinitedimensionalFilters1993}. Other classical finite-dimensional filters are those in \citet{benesExactFinitedimensionalFilters1981} and \citet{daumExactFinitedimensionalNonlinear1986}. Since exact finite-dimensional filters are difficult to obtain, finite-dimensional approximations have been extensively studied, starting from the Extended Kalman Filter [EKF] obtained via a linearization approach. Other approaches rely on Markov chain approximations \citep{kudu01}, projections on a finite-dimensional manifold \citep{brigoApproximateNonlinearFiltering1999}, and Fourier expansions \citep{Mikulevicius1993}. The finite-dimensionality of filters has been studied in particular in discrete-time models; such a model may also result from a time discretization of a continuous-time model (actual observations are naturally mostly in discrete time), see Section 2.3. in \citet{runggaldierApproximationsDiscreteTime1994} and \citet{runggaldierSufficientConditionsFinite2001}.   

Our aim, which also represents our contribution with respect to the literature, in particular \citet{na00} and \citet{nape02},  is to:
\begin{enumerate}
    \item[i)] show the equivalence of the original and separated problems;
    \item[ii)] find a criterion to analyze under which conditions and in what form a separation property holds between estimation and control; and
    \item[iii)] widen the applicability of the MZE approach by deriving an MZE that applies whenever a finite-dimensional filter exists.
\end{enumerate}
Furthermore, we derive a generalized Kallianpur-Striebel formula for the problem in its original version, and we also establish the separability of existing risk-sensitive investment management models.

Concerning the separated problem, given our assumption on the existence of a finite-dimensional filter process ${\zeta}_t$, we shall consider this process ${\zeta}_t$ as the state variable process for the separated problem. We then perform a measure change in the spirit of \citet{kuna02} to express the thus obtained separated risk-sensitive control problem in standard form.

As in the work of Nagai and Peng, our paper considers a partially observed stochastic control problem of the risk-sensitive type that arises from investment in financial markets. A peculiarity of such problems with respect to more engineering-type control problems under partial observation is that the control, which is given by the investment strategy, does not affect the observations. This has an implication also for the issue of separability \citep[in this context see e.g.][]{georgiouSeparationPrincipleStochastic2013}. While the issue of separability for investment problems with a risk-sensitive criterion also appears in \citet{na00} and \citet{nape02}, those studies consider only linear-Gaussian models. Our work generalizes such a setting by allowing for more general models for which we assume the existence of a finite-dimensional filter, parametrized by ${\zeta}_t$. 

Specifically, the set of risk-sensitive control problems we consider in this paper generalizes the setup in \citet{davisRisksensitiveBenchmarkedAsset2021}.  This setup provides a convenient starting point because it already includes most risk-sensitive asset management models \citep{bipl99,na00,kuna02,dall_RSBench}, contains a subclass of Linear-Exponential-Gaussian (LEG) models \citep{ja73,Bensoussan1992}, and because it also connects to the recent literature on optimal investment with expert opinions \citep{fgw12, dall_BLcontinuous, Gabihetal2014, Sass_etal_2017, sass2020diffusion,dall_BBL_2020} which relies extensively on filtering. We extend this setup in two main ways. We remove the linearity assumption to consider general nonlinear dynamics and introduce a mix of observable and unobservable state variables. The reader will find a detailed correspondence between our approach and existing risk-sensitive investment management models in Appendix \ref{app:correspondence}.

Concerning the MZE in our setup, it will be a deterministic PDE, contrary to Nagai and Peng's stochastic PDE. Its solution allows us to determine the value of the objective function of the control problem for each given strategy. We do not discuss how to obtain its solution, but it is a deterministic PDE, so one ends up with the same degree of difficulty as when solving an HJB equation for a complete observation stochastic control problem.

Our paper is organized as follows. Section \ref{sec:setting} describes the model setup. Section \ref{sec:filtering} introduces the finite-dimensional filter, and Section \ref{sec:controlsetting} presents the setup of our risk-sensitive stochastic control problem. Subsection \ref{S.4.1} recalls the main features of the problem in the pre-filter setting with the unobserved state/factor process $X_t$.   Subsection \ref{S.4.2} then presents the setup of the separated problem based on the filter parameter process ${\zeta}_t$ and  in Subsection \ref{S.4.3} we prove the equivalence of the two problem settings, the original one of Subsection \ref{S.4.1}, and the separated one of Subsection \ref{S.4.2}; the proof itself can be found in Appendix \ref{app:proofs:controlpb:equival}. Section \ref{sec:Zakai} then concerns the modified Zakai equation (MZE) for the separated problem. We also propose a generalized Kallianpur-Striebel formula, with the proof in Appendix \ref{app:proof:GKSF}, and establish a relation between the original and separated problems. With this last objective in mind, we summarize the key aspects of the MZE for the problem in its original form, due to Nagai and Peng, in Appendix \ref{app:MZE:NagaiPeng}. Section \ref{sec:Separability} outlines criteria for separability. Finally, Section \ref{sec:examples} provides examples of implementation.

\section{Model Setting}\label{sec:setting}

Let $\left(\Omega,  \mathcal{F}, \left(\mathcal{F}\right)_{t = 0}^T, \mathbb{P} \right)$ be a filtered complete probability space. We consider on this space
a $\mathbb{R}^d$-valued $\mathcal{F}_t$-Wiener process $W_t$ with components $W_t(i), i = 1, \ldots d$, and where, for $k, \ell \geq 0$, $n, m \geq 1$ to be specified below, $d := \ell + n + m + k + 1$.

\subsection{Factor Process}\label{sec:setting:factor}

In our model, asset and benchmark drifts are nonlinear functions of $n$ factors. These factors can be macroeconomic variables (GDP, inflation, interest rates,...). They can also be returns driven by empirical asset pricing factors (market risk premium, value premium, momentum,...). Finally, these factors can be latent variables (obtained, for example, from a principal component analysis). 

Similarly to \citet{PlatenRunggaldier2004,PlatenRunggaldier2007}, we assume that $n \geq 1 $ factors are unobservable while $\ell \geq 0$ factors are observable. The unobservable factors $X$ and observable factors $F$ have the following dynamics:
\begin{numcases}{}
dX_t = b(t,X_t,F_t)dt + \Lambda(t,X_t,F_t) dW_t,    \quad X_0 \sim \mathcal{D}(\mu)    
                                    \label{eq:state:X}\\
dF_t = b^f(t,X_t,F_t)dt + \Lambda^f(t,F_t) dW_t,    \quad F_0 = f_0,
                                    \label{eq:state:F}
\end{numcases}
where $b(t,x,f): [0,T] \times \mathbb{R}^n \times \mathbb{R}^\ell \to \mathbb{R}^n$, $b^f(t,x,f) : [0,T] \times \mathbb{R}^n \times \mathbb{R}^\ell \to \mathbb{R}^\ell$, and the matrix-valued functions $\Lambda(t,x,f): [0,T] \times \mathbb{R}^n \times \mathbb{R}^\ell \to \mathbb{R}^{n \times d}$, and $\Lambda^f(t,f): [0,T] \times \mathbb{R}^\ell \to \mathbb{R}^{\ell \times d}$ are suitable $C^{1,1,1}_b$ or $C^{1,1}_b$ functions that ensure the existence of a strong solution to \eqref{eq:state:X}-\eqref{eq:state:F}.  The random initial value of the unobservable factors, $X_0$ follows a known distribution $\mathcal{D}$ with parameters $\mu$. The initial value $X_0$ is also independent of the Wiener process $W_t$.

\begin{remark}
As customary in filtering, the diffusion coefficient of an observation process cannot depend on the unobservable variable. Such dependence generates a noise-free observation via the quadratic variation of the process and thus causes the filter to degenerate.       
\end{remark}

\subsection{Financial Assets and Benchmark}\label{sec:setting:assets}

The financial market consists of $m$ risky financial assets. Their \emph{discounted} prices follow a geometric dynamics: 
\begin{align}\label{eq:dS}
     \frac{dS^i_t}{S^i_t} 
    &= a_{i}(t,X_t,F_t)dt
    + \sum_{j=1}^{d} \sigma_{ij}\left(t,F_t\right) dW^j_t,
    \qquad
    S_i(0) = s_i,
    i = 1, \ldots, m,
\end{align}
where $a(t,x,f) : [0,T] \times \mathbb{R}^n \times \mathbb{R}^\ell \to \mathbb{R}^m$ and $\Sigma\left(t,f\right) = \left( \sigma_{ij}\left(t,f\right)\right), i = 1, \ldots, m; j = 1, \ldots, d : [0,T] \times \mathbb{R}^{\ell} \to \mathbb{R}^{m \times d}$ are suitable $C^{1,1,1}_b$ or $C^{1,1}_b$ functions that ensures the existence of a strong solution to \eqref{eq:dS}. We also assume that no two assets have an identical risk profile:   
\begin{assumption}\label{as:sigma:posdef}
    The matrix $\Sigma\Sigma'\left(t,F_t\right)$ is positive definite.
\end{assumption}

To tidy the notation in Assumption \ref{as:sigma:posdef}, we denoted the matrix multiplication $\Sigma\left(t,F_t\right)\Sigma'\left(t,F_t\right)$ by $\Sigma\Sigma'\left(t,F_t\right)$. We adopt similar notational shortcuts throughout the paper. 

As in \citet{dall_BBL_2020,davisRisksensitiveBenchmarkedAsset2021}, we distinguish between tradable and non-tradable assets. The investor is allowed to trade an investment universe of $0 < m_1 \leq m$ assets, but not the remaining $m_2 = m - m_1 \geq 0$ assets. Accordingly, we express the securities price vector as $S_t := \begin{pmatrix} S^{(1)'}_t    & S^{(2)'}_t \end{pmatrix}'$, where $S^{(1)}_t$ is the $m_1$-vector process of tradable securities prices and $S^{(2)}$ is the $m_2$-vector process of untradable, but observable, securities prices. We perform a similar decomposition for the vector- and matrix-valued functions $a$ and $\Sigma$ and introduce the following 
\begin{notation}\label{N.1}
Denote by $a^{(1)}$ and $\Sigma^{(1)}$  the subvector and submatrix corresponding to the $m_1$ tradable assets, with an analogous definition for $a^{(2)}$ and $\Sigma^{(2)}.$ \end{notation}

The distinction between tradable and non-tradable assets introduces a constraint in the stochastic control problem, but it does not affect the filtering problem. Investors observe the price trajectory of all the assets to estimate the unobservable factors.


The investor manages a portfolio of financial assets against a benchmark index, typically a financial index or a custom-built passive portfolio. We model the benchmark's \emph{discounted} level as:
\begin{align}\label{eq:dL}
\frac{dL_t}{L_t} &= c(t,X_t, F_t)dt + \Xi\left(t,F_t\right) dW_t,
\quad L(0) = l,
\end{align}
where $c(t,x, f) : [0,T] \times \mathbb{R}^n \times \mathbb{R}^\ell \to \mathbb{R}$ and $\Xi\left(t,f\right): [0,T] \times \mathbb{R}^{\ell} \to \mathbb{R}^{d}$ are suitable $C^{1,1,1}_b$ or $C^{1,1}_b$ functions that ensure the existence of a strong solution.

\subsection{Observation Process}\label{sec:setting:observation}

The investor infers the value of the $n$ unobservable factors $X_t$ from information contained in the observable factors, financial asset prices, benchmark value, and in non-market observations such as expert forecasts and alternative data. We model these observations as the $\mathbb{R}^{m^Y}$-valued process $Y_t := \begin{pmatrix} F_t', \ln S_t', \ln L_t', E_t' \end{pmatrix}'$, where
\begin{enumerate}[i)]
    \item $\ln S_t$ is the $\mathbb{R}^m$-valued vector with elements equal to the logarithm of the  discounted securities prices, that is $\ln S_t^i, i = 1, \ldots, m$.  This vector process solves the SDE
    \begin{align}\label{eq:dfrakS}
            d \ln S_t 
        &= \left[a(t, X_t, F_t) -\frac{1}{2} d_{\Sigma}\left(t,F_t\right) \right]dt
        	+ \Sigma\left(t,F_t\right) dW_t,
            \quad \ln S_0 = \ln s,
    \end{align}
    where $d_{\Sigma}\left(t,F_t\right)$ is the vector containing the elements on the main diagonal of the square matrix $\Sigma\Sigma'\left(t,F_t\right)$, that is, $d_{\Sigma}\left(t,f\right) := \begin{pmatrix} \left( \Sigma\Sigma'\right)_{11}\left(t,f\right) & \left(\Sigma\Sigma'\right)_{22}\left(t,f\right) & \ldots & \left(\Sigma\Sigma' \right)_{mm}\left(t,f\right) \end{pmatrix}'$;

    \item $\ln L_t$ is the logarithm of the discounted benchmark level, with dynamics 
    \begin{align}\label{eq:dfrakL}
        d \ln L_t 
        &= \left[c(t, X_t, F_t) -\frac{1}{2} \Xi\Xi'\left(t,F_t\right)\right]dt
        	+ \Xi\left(t,F_t\right) dW_t,
        \quad \ln L_0 = \ln l;
    \end{align}

    \item  $E_t$ models $k$ expert forecasts. It solves the SDE:  
    \begin{align}\label{eq:dZ}
        dE_t = a^{E}(t,X_t,F_t)dt + \Sigma^{E}\left(t,F_t\right) dW_t,
        \quad E_0 = e_0,
    \end{align}
    where $a^{E}(t,x,f) : [0,T] \times \mathbb{R}^n \times \mathbb{R}^\ell \to \mathbb{R}^k$ and $\Sigma^{E}\left(t,f\right): [0,T] \times \mathbb{R}^{\ell} \to \mathbb{R}^{k \times d}$ are suitable $C^{1,1,1}_b$ and $C^{1,1}_b$ functions that ensure the existence of a strong solution.
\end{enumerate}
Hence, $m^Y := \ell + m + k + 1$. This also justifies the fact of having put $d = \ell + n + m + k + 1$ at the beginning of this section. We express the dynamics of $Y_t$ succinctly as: 
\begin{align}\label{eq:obs}
dY_t &= a^Y(t,X_t,F_t)dt + \Sigma^Y\left(t,F_t\right) dW_t, \; Y_0 = y_0,
\end{align}
where
\begin{align}
a^Y(t,X_t,F_t) &:= \left(
b^f(t,X_t,F_t)',
a(t,X_t,F_t)' - \frac{1}{2}d_{\Sigma}\left(t,F_t\right), 
c(t,X_t,F_t)' - \frac{1}{2} \Xi\Xi'\left(t,F_t\right), 
a^{E}(t,X_t,F_t)' 
\right)',
                        \label{eq:Y:coef:decomp:a}    \\
\Sigma^Y\left(t,F_t\right) &:= \begin{pmatrix} 
\Lambda^f\left(t,F_t\right)', &
\Sigma\left(t,F_t\right)', &  
\Xi\left(t,F_t\right)', & 
\Sigma^{E}\left(t,F_t\right)'
\end{pmatrix}'.
                                \nonumber
\end{align}

\subsection{Summary of the Model}\label{sec:setting:summary}

Table \ref{table:setting:summary} presents the essential information related to the variables used in our model.

\begin{table}[h!]
\begin{center}
\resizebox{1.00\textwidth}{!}{%
\begin{tabular}{| c| l | c | c | c | }
\hline
      Variable
    & Interpretation
    & Dimension
    & Observable?
    & Traded by 
    \\
    &
    &
    &
    & the investor?
    \\
\hline\hline
      $X_t$
    & Factors (hidden) 
    & $n  \geq 1$
    & No
    & No
\\
\hline
      $F_t$
    & Factors (observable) 
    & $\ell \geq 0$
    & Yes
    & No
\\
\hline
      $S_t = \begin{pmatrix}
          S_t^{(1)'} & S_t^{(2)'}
      \end{pmatrix}$
    & Securities 
    & $m = m_1 + m_2$
    & Yes
    & See below
\\
      \hspace{3mm} $S_t^{(1)'}$
    & \hspace{3mm} Tradable securities  
    & $m_1 \geq 1$
    & Yes
    & Yes
\\
      \hspace{3mm} $S_t^{(2)'}$ 
    & \hspace{3mm} Untradable securities 
    & $m_2 \geq 0$
    & Yes
    & No
\\
\hline
      $L_t$
    & Investment benchmark 
    & 1
    & Yes
    & No
\\
\hline
      $E_t$
    & Expert forecasts 
    & $k \geq 0$
    & Yes
    & No
\\
\hline
\end{tabular}%
}
\end{center}
\caption{Summary of the model.}
\label{table:setting:summary}
\end{table}

\section{Filter Setup}\label{sec:filtering}

Let $\mathcal{F}^Y_t=\sigma\{Y(u),0\leq u \leq t\}$ be the filtration generated by the observation process\footnote{By a slight abuse of notation, we refer to $\mathcal{F}^Y_t$ as the \emph{filtration} to identify the filtration $\left( \mathcal{F}^Y \right)_{t=0}^T$ -- by contrast with $\left( \mathcal{F} \right)_{t=0}^T$ -- and point to the $\sigma$-algebra $\mathcal{F}^Y_t$ which tracks the information available at time $t$.}. We embed our partial observation problem in a slightly more general class, where the coefficients may depend on the entire observation process $Y_t$. Now we express $b$, $a^Y$, $\Lambda$, and $\Sigma^Y$ as functions of $Y_t$ instead of just $F_t$, with a slight abuse of notation. Thus, we have
\begin{align}\label{eq:POpb}
\begin{cases}
dX_t &= b(t,X_t,Y_t)dt + \Lambda(t,X_t,Y_t) dW_t,    \quad X_0 \sim D(\mu)
                                    \\
dY_t &= a^Y(t,X_t,Y_t)dt + \Sigma^Y\left(t,Y_t\right) dW_t, \; Y_0 = y_0.
\end{cases}   
\end{align}

We shall base ourselves on the following:
\begin{assumption}\label{as:FDfilter}
There exists a finite-dimensional filter of $X_t$ given $\mathcal{F}^Y_t$, parametrized by $\zeta = \left( \zeta^1, \zeta^2, \ldots, \zeta^q \right)$ for some $q \in \mathbb{N}$, i.e.
\begin{align}\label{eq:proba:FDfilter}
    p_{X_t \mid \mathcal{F}^Y_t} \left(X\right) \sim p\left(X;\zeta\right).
\end{align}
\end{assumption}

Let $ Z := \left(Y_1, \ldots, Y_{m^Y}, \zeta^1, \ldots, \zeta^q \right)'$. Since our model is a diffusion-type model, analogously to what is the case with the Kalman-Bucy filter, we may reasonably assume that $\zeta$ can be represented as a $\mathbb{R}^q$-valued diffusion process with dynamics
\begin{align}\label{eq:zeta}
    d \zeta_t = G(t,Z_t)dt + H(t, Z_t) dY_t,
\end{align}
with initial value $\zeta_0$, and for suitable functions $G$ and $H$ such that a solution exists.

\begin{remark}\label{Wonham}
The dynamics at \eqref{eq:zeta} includes the known finite-dimensional filter dynamics for those cases where the not directly observable process $X_t$ follows a diffusion dynamics as in \eqref{eq:POpb}. It includes also the classical Wonham filter \citep[see][]{wonhamApplicationsStochasticDifferential1964}, where $X_t$ is a finite-state Markov process with states $X_t\in \{x^1,\cdots,x^k\}$ for some $k\in \mathbb{N}$ rather than a diffusion. This process $X_t$, or a function thereof, is assumed to be observed in additive Gaussian noise. More precisely let, for a measurable function $f(\cdot)$ and a constant $\sigma$, the observations be given by $Y_t$ with
$$dY_t=f(X_t)\,dt+\sigma\,d\beta_t$$
where $\beta_t$ is a Wiener process. In this case the filter parameters $\zeta_t$ correspond to the conditional state probabilities $p_t=(p_t^1,\cdots,p_t^k)$ with $p_t^i=P\{X_t=x^i\mid{\cal F}_t^Y\}$ that evolve on a simplex. Even so, recalling that $Z=(Y,\zeta)$ and denoting by $Q=(q_{ij})\in{\mathbb{R}}^{k\times k}$ the generator of $X_t$, they satisfy a dynamics of the form \eqref{eq:zeta} given by (see e.g. equation (2) in \citeauthor{zhangTwotimescaleApproximationWonham2007},\citeyear{zhangTwotimescaleApproximationWonham2007})
$$dp_t=p_tQ\,dt-\frac{1}{\sigma^2}\hat f_tp_tF_tdt+\frac{1}{\sigma^2}p_tF_tdY_t$$
where $\hat f_t=\sum_{i=1}^kf(i)p_t^i$ and $F_t = \text{diag}(f(1),\cdots,f(k))-\hat f_tI$ with $I$ the identity matrix. 
\end{remark}

Next, for a generic integrable function $f\left(t,X,Y\right)$, define
\begin{align}\label{eq:fhat}
    \hat{f}(t,Z) 
    &:= \int  f\left(t,x,Y\right) d p\left(x;\zeta\right)
\end{align}
In particular, we let 
\begin{align}\label{eq:ahat}
    \hat{a}^Y(t,Z) := \int  a^Y\left(t,X,Y\right) d p\left(X;\zeta\right).
\end{align}
Based on \eqref{eq:Y:coef:decomp:a} we decompose $\hat{a}^Y$ as
\begin{align}\label{eq:Y:coef:decomp:tilde}
\hat{a}^Y(t,Z_t) := \begin{pmatrix} 
    \hat{b}^{f}(t,Z_t)', &
    \hat{a}(t,Z_t)' - \frac{1}{2}d_{\Sigma}\left(t, Y_t \right), &
    \hat{c}(t,Z_t) - \frac{1}{2} \Xi\Xi'\left(t, Y_t \right), & 
    \hat{a}^{E}(t,Z_t)' 
\end{pmatrix}'
\end{align}
and consider the further decomposition $\hat{a}(t,Z_t) = \begin{pmatrix} \hat{a}^{(1)'}(t,Z_t) & \hat{a}^{(2)'}(t,Z_t) \end{pmatrix}'$.

Since the accessible filtration is the subfiltration $\mathcal{F}^Y_t \subsetneq \mathcal{F}_t$, we shall consider dynamics restricted to $\mathcal{F}^Y_t$. 

\begin{lemma}[``Innovations Lemma'']\label{lem:Wiener:restricted}

There exists a $\left( \mathbb{P}, \mathcal{F}^Y_t \right)$ $d$-dimensional standard Wiener process $\tilde{W}_t$ such that
\begin{align}\label{eq:lem:Wiener:restricted}
    \Sigma^Y\left(t, Y_t\right) d\tilde{W}_t 
    = \left(a^Y(t,X_t,Y_t) - \hat{a}^Y(t,Z_t) \right)dt + \Sigma^Y\left(t, Y_t\right) dW_t.
\end{align}

\end{lemma}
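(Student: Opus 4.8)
The plan is to carry out the classical Fujisaki--Kallianpur--Kunita innovations construction, taking care of the one nonstandard feature here, namely that the observation-noise matrix $\Sigma^Y$ is rectangular of size $m^Y\times d$ with $d=m^Y+n$ and $n\ge 1$. The starting point is the identity $\hat a^Y(t,Z_t)=\mathbb{E}\!\left[a^Y(t,X_t,Y_t)\mid\mathcal{F}^Y_t\right]$, which is immediate from Assumption \ref{as:FDfilter} together with \eqref{eq:fhat}--\eqref{eq:ahat}: the conditional law of $X_t$ given $\mathcal{F}^Y_t$ has density $p(\cdot;\zeta_t)$ and $Z_t$ is $\mathcal{F}^Y_t$-measurable. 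This is exactly what will make the ``drift'' of the innovations vanish in $\mathcal{F}^Y_t$-conditional mean.

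First I would introduce the innovations process
\[\bar\nu_t:=Y_t-y_0-\int_0^t\hat a^Y(s,Z_s)\,ds=\int_0^t\!\left(a^Y(s,X_s,Y_s)-\hat a^Y(s,Z_s)\right)ds+\int_0^t\Sigma^Y(s,Y_s)\,dW_s\]
and show it is a continuous $(\mathbb{P},\mathcal{F}^Y_t)$-martingale with $\langle\bar\nu\rangle_t=\int_0^t(\Sigma^Y(\Sigma^Y)')(s,Y_s)\,ds$. For the martingale property one runs the usual computation: for $s<t$ and bounded $\mathcal{F}^Y_s$-measurable $\eta$, the stochastic integral contributes nothing to $\mathbb{E}[\eta(\bar\nu_t-\bar\nu_s)]$ after conditioning on $\mathcal{F}_s$, while the drift term is handled by Fubini together with $\mathbb{E}[\eta\,a^Y(u,X_u,Y_u)]=\mathbb{E}[\eta\,\hat a^Y(u,Z_u)]$ for $u\ge s$, giving $\mathbb{E}[\eta(\bar\nu_t-\bar\nu_s)]=0$; the boundedness built into the regularity hypotheses on the coefficients makes the integrability trivial. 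The bracket then follows since the drift of $\bar\nu$ has finite variation.

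The remaining step is to promote the $m^Y$-dimensional martingale $\bar\nu$ to a genuine $d$-dimensional Wiener process. Assuming, as a mild non-degeneracy extending Assumption \ref{as:sigma:posdef}, that $\Sigma^Y(\Sigma^Y)'(t,y)$ is invertible, set $(\Sigma^Y)^+:=(\Sigma^Y)'(\Sigma^Y(\Sigma^Y)')^{-1}$, so that $\Sigma^Y(\Sigma^Y)^+=I_{m^Y}$ and $P:=(\Sigma^Y)^+\Sigma^Y$ is the orthogonal projection onto the row space of $\Sigma^Y$. On an enlargement of the space carrying a $d$-dimensional Wiener process $B$ independent of $\mathcal{F}$, put
\[\tilde W_t:=\int_0^t(\Sigma^Y)^+(s,Y_s)\,d\bar\nu_s+\int_0^t\!\left(I_d-P(s,Y_s)\right)dB_s.\]
This is a continuous local martingale for $\mathcal{F}^Y_t\vee\sigma(B_u:u\le t)$, and from $(\Sigma^Y)^+\Sigma^Y(\Sigma^Y)'((\Sigma^Y)^+)'=P$ and $(I_d-P)(I_d-P)'=I_d-P$ one gets $\langle\tilde W\rangle_t=t\,I_d$, so $\tilde W$ is a $d$-dimensional Wiener process by L\'evy's characterization; finally $\Sigma^Y(\Sigma^Y)^+=I_{m^Y}$ and $\Sigma^Y(I_d-P)=0$ give $\Sigma^Y(t,Y_t)\,d\tilde W_t=d\bar\nu_t$, which is \eqref{eq:lem:Wiener:restricted}.

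I expect the main obstacle to be this last construction, not the martingale property. Since $\Sigma^Y$ maps $\mathbb{R}^d$ onto the strictly smaller $\mathbb{R}^{m^Y}$, the innovations carry only $m^Y$ of the needed $d$ independent noise directions; the missing $n=d-m^Y$ directions lie in $\ker\Sigma^Y$, are invisible both in the observation and in the filter dynamics \eqref{eq:zeta}, and must be supplied by an auxiliary independent Wiener process. The statement should therefore be read with $\mathcal{F}^Y_t$ tacitly augmented by that independent noise; $\tilde W$ is literally $\mathcal{F}^Y_t$-adapted only in the square case $n=0$, which is excluded here. One should also check that $p(\cdot;\zeta_t)$ being a bona fide regular conditional density, as posited in Assumption \ref{as:FDfilter}, is what licenses the identification used in the first step.
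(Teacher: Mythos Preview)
Your argument is correct and, in fact, considerably more detailed than what the paper offers: the paper does not prove this lemma at all but simply cites Theorem~7.12 in Liptser--Shiryaev and the Appendix of Platen--Runggaldier~(2004). Your Fujisaki--Kallianpur--Kunita construction is exactly the standard route those references take, so in spirit you are aligned with the paper's intended proof.

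Where you go beyond the paper is in confronting the rectangularity of $\Sigma^Y$. The paper's Remark following the lemma acknowledges that the $m^Y$-dimensional classical innovation $U_t$ differs from the $d$-dimensional $\tilde W_t$, but it never explains how to manufacture the extra $n=d-m^Y$ directions. Your pseudoinverse-plus-auxiliary-Wiener construction is the right fix, and your caveat that $\tilde W$ is then only adapted to $\mathcal{F}^Y_t$ augmented by the independent $B$ is a genuine and accurate observation about the statement as written. Since every subsequent use of $\tilde W$ in the paper occurs through expressions of the form $\Sigma^Y d\tilde W$, $\Sigma^{(1)} d\tilde W$, $\Xi\, d\tilde W$, or $H\Sigma^Y d\tilde W$ (see Remark~\ref{rk:Wtilde:blocks} and equations \eqref{eq:obs:tilde}--\eqref{eq:zeta:wtilde}), the auxiliary component in $\ker\Sigma^Y$ is annihilated everywhere it matters, so the enlargement is harmless for the paper's purposes --- but you are right that, read literally, the lemma overstates the adaptedness of $\tilde W$ itself.
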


For the proof, see Theorem 7.12 in \cite{LiShI04} or the Appendix in \cite{PlatenRunggaldier2004}.

\begin{remark}\label{rk:innovation}
Notice that the $d$-dimensional ($d=m^Y+n$) Wiener process $\tilde{W}_t$ in the above lemma is closely related but not equal to the traditional innovations process $U_t$ as it appears in the Kalman-Bucy filter setup and where it should have dimension $m^Y$. Following from equation \eqref{eq:lem:Wiener:restricted}, we have
\begin{align}\label{eq:rk:innovation:Kalmaninno} 
    dU_t 
    := \left( \Sigma^{Y}\Sigma^{Y'}\left(t, Y_t\right)\right)^{-\frac{1}{2}} \left(
    dY_t - \hat{a}^Y(t,Z_t) dt \right)
    = \left( \Sigma^{Y}\Sigma^{Y'}\left(t, Y_t\right)\right)^{-\frac{1}{2}}\Sigma^{Y}\left(t, Y_t\right) d\tilde{W}_t,
\end{align}
where all the quantities are on $\mathcal{F}^Y_t$. We nevertheless called the lemma ``innovations lemma'' since our $\tilde{W}_t$ serves the same purpose as the traditional innovations process, namely to standardize the difference between the actual observation drift $a^Y(t,X_t,Y_t)$ which, contrary to the Kalman filter setting, is here in general nonlinear, and its expectation is $\hat a^Y(t,Z_t)$. Furthermore, although $\tilde{W}_t$ is a translation of $W_t$, it is not a translation coming from a Girsanov measure transformation. Both $W_t$ and $\tilde{W}_t$ are Wiener processes under $\mathbb{P}$. However, $W_t$ is defined on the filtration $\cF_t$, while $\tilde{W}_t$ is defined on  $\cF^Y_t \subsetneq \cF_t$.  
\end{remark}

\begin{remark}\label{rk:Wtilde:blocks}
In block matrix form, equation \eqref{eq:lem:Wiener:restricted} reads
{\small
\begin{align}\label{eq:lem:Wiener:restricted:blockmat}
    \begin{pmatrix} 
        \Lambda^f\left(t,Y_t\right) \\
        \Sigma\left(t,Y_t\right) \\  
        \Xi\left(t,Y_t\right) \\ 
        \Sigma^{E}\left(t,Y_t\right)
    \end{pmatrix}
    d\tilde{W}_t 
    = 
    \left[
    \begin{pmatrix} 
        b^{f}(t,Y_t) \\
        a(t,Y_t) - \frac{1}{2}d_{\Sigma}\left(t, Y_t \right) \\
        c(t,Z_t) - \frac{1}{2} \Xi\Xi'\left(t, Y_t \right) \\ 
        a^{E}(t,Y_t) 
    \end{pmatrix}  
    - 
    \begin{pmatrix} 
        \hat{b}^{f}(t,Y_t) \\
        \hat{a}(t,Y_t) - \frac{1}{2}d_{\Sigma}\left(t, Y_t \right) \\
        \hat{c}(t,Y_t) - \frac{1}{2} \Xi\Xi'\left(t, Y_t \right) \\ 
        \hat{a}^{E}(t,Y_t) 
    \end{pmatrix} 
    \right]dt 
    +
    \begin{pmatrix} 
        \Lambda^f\left(t,Y_t\right) \\
        \Sigma\left(t,Y_t\right) \\  
        \Xi\left(t,Y_t\right) \\ 
        \Sigma^{E}\left(t,Y_t\right)
    \end{pmatrix}
    dW_t.
\end{align}
}
In particular, the second and third rows of the block matrix imply that
{\small
\begin{align}
    \Sigma\left(t,Y_t\right) d\tilde{W}_t 
    = \left( a(t,Y_t) - \hat{a}(t,Y_t)\right) + \Sigma\left(t,Y_t\right) dW_t
    \qquad
    \Xi\left(t,Y_t\right) d\tilde{W}_t 
    = \left( c(t,Y_t) - \hat{c}(t,Y_t)\right) + \Xi\left(t,Y_t\right) dW_t.
                                            \nonumber
\end{align}
}
We use these relations throughout the paper to ensure that individual component processes $F_t, S_t, L_t, E_t$ of the observation $Y_t$ are well defined in terms of subvectors of $\Sigma^Y\left(t, Y_t\right) d\tilde{W}_t$. 
\end{remark}

Applying Lemma \ref{lem:Wiener:restricted}, we express \eqref{eq:obs} and \eqref{eq:zeta} as: 
\begin{numcases}{}
dY_t = \hat{a}^Y(t, Z_t) dt + \Sigma^Y\left(t, Y_t\right) d\tilde{W}_t,
                                    \label{eq:obs:tilde} \\
d\zeta_t = \hat{G}(t, Z_t)dt + H(t, Z_t)\Sigma^Y\left(t, Y_t\right) d\tilde{W}_t,
                                    \label{eq:zeta:wtilde}
\end{numcases}
where $\hat{G}(t, Z_t) := G(t, Z_t) + H(t, Z_t)\hat{a}^Y(t, Z_t).$

Furthermore, using the definition of $Y_t$ and  the decomposition \eqref{eq:Y:coef:decomp:tilde}, we have
\begin{align}
\frac{dS_t^{i}}{S_t^{i}}
    = \hat{a}_{i}(t,Z_t)dt
    + \sum_{j=1}^{d} \sigma_{ij}\left(t, Y_t\right) d\tilde{W}_{j}(t),
&& \frac{dL_t}{L_t} = \hat{c}(t,Z_t)dt 
    + \Xi\left(t, Y_t\right) d\tilde{W}_t.
\end{align}


\section{Risk-Sensitive Control Problem}\label{sec:controlsetting}

The control problem that we consider is a partial observation control problem, where the information to an agent is represented by the observation filtration $\mathcal{F}^Y_t.$ It becomes thus natural to look at a problem formulation that is restricted to $\mathcal{F}^Y_t$ by using the filter for the unobserved process $X_t$. Still, in line with \citet{na00} and \citet{nape02}, and in view also of later comparisons, we shall first present the risk-sensitive control problem expressed in terms of the unobserved factor process $X_t$, i.e. before applying the filter. We shall call this problem the \textit{original problem}. We shall then describe the problem in its so-called  \textit{ separated form,} expressed in terms of the parameters of the filter that, according to Assumption \ref{as:FDfilter}, are supposed to be finite-dimensional.
We shall mainly concentrate on this second version of the problem as it is where we make our novel contribution. 

The investment strategy is a main ingredient for both versions of the problem. More precisely, we let the investment strategy $h_t$  be the $m_1$-element vector process representing the proportion of wealth invested in the $m_1$ tradable financial securities. The investment strategy has to be based on the available information that is given by the observations $Y_t$, and so we shall consider it to be $\mathcal{F}^Y_t$-adapted for both versions of the problem in Subsections \ref{S.4.1} and \ref{S.4.2}

\subsection{The problem expressed in terms of the unobserved factor process $X_t$}\label{S.4.1}
The discounted wealth process $V_t$ is the market value of the self-financing investment portfolio subject to the investment strategy $h_t$. It solves the SDE: 
\begin{align}\label{eq:V}
\frac{dV_t}{V_t}
&= \displaystyle{\sum_{i=1}^{m_1} h_t^i\,\frac{dS_t^i}{S_t^i}}= h_t' a^{(1)}(t,X_t, Y_t) dt
+ h_t' \Sigma^{(1)} \left(t, Y_t\right)	 dW_t,
\qquad
V_0 = v.
\end{align}
where $a^{(1)}$ and $\Sigma^{(1)}$ refer to the drift and diffusion of the tradable assets, as introduced in Notation \ref{N.1}. The log excess return $R_t := \ln \frac{V_t}{L_t}$ tracks the portfolio's performance relative to its benchmark. Its dynamics is:
\begin{align}\label{eq:R}
dR_t &= \left[ 
  \left(- \frac{1}{2} h_t'\Sigma^{(1)}\Sigma^{(1)'} \left(t, Y_t\right) h_t + h_t'a^{(1)}(t,X_t, Y_t) + \frac{1}{2}\Xi\Xi'\left(t, Y_t\right) - c(t,X_t, Y_t \right) \right] dt
                                    \nonumber\\
    & + \left(h_t'\Sigma^{(1)}\left(t, Y_t\right) - \Xi\left(t, Y_t\right) \right) dW_t,
\qquad
R_0 = \ln \frac{v}{l} =: r_0.     
\end{align}

The risk-sensitive benchmarked criterion $J$ is then
\begin{align}\label{eq:J}
J(h;T,\theta,r_0) 
&:= -\frac{1}{\theta} \ln \mathbf{E} \left[ r_0^{-\theta} e^{-\theta R_T}\right]
= -\frac{1}{\theta} \ln \mathbf{E} \left[    
r_0^{-\theta} \exp \left\{ \theta \int_{0}^{T} g(t,X_t, Y_t,h_t;\theta) dt \right\} \chi_T^h
\right],
\end{align}
where $\theta \in (-1,0)\cup(0,\infty)$ is the risk sensitivity parameter, $T < \infty$ is a fixed time horizon,

\begin{align}\label{eq:g}
g(t,x,y,h;\theta)
&:= \frac{\theta+1}{2}h'\Sigma^{(1)}\Sigma^{(1)'} \left(t, y\right) h 
- h' a^{(1)}(t,x,y)
- \theta h'\Sigma^{(1)}\Xi'\left(t,y\right)
+ c(t,x,y)
                                        \nonumber\\
&+ \frac{\theta -1}{2}\Xi\Xi'\left(t,y\right),
\end{align}
and the Dol\'eans-Dade exponential $\chi_t^h, \; t \in [0,T]$ is defined via  
\begin{align}\label{eq:expmartX}
    \chi_t^h 
    &:= \exp \bigg\{ -\theta \int_{0}^{t}\left(h_s' \Sigma^{(1)}\left(s, Y_s\right) - \Xi\left(s, Y_s\right) \right)d W_s  
                    \nonumber\\  
    &
        -\frac{1}{2} \theta^2     \int_{0}^{t} \left(h_s' \Sigma^{(1)}\left(s, Y_s\right) - \Xi\left(s, Y_s\right) \right)
        \left(\Sigma^{(1)'}\left(s, Y_s\right) h_s - \Xi'\left(s, Y_s\right) \right)ds 
    \bigg\}.
\end{align}

\begin{remark}
The only case of practical interest is $\theta \in (0,\infty)$. It corresponds to investors with a higher risk aversion than the Kelly/log-utility investor. The case $\theta \in (-1,0)$ leads to overbetting strategies that leverage the Kelly portfolio. These strategies effectively trade expected returns for more volatility  \citep{davisRisksensitiveBenchmarkedAsset2021}. So one should avoid these strategies strictly. Additionally, the choice of model parameters is crucial when $\theta \in (-1,0)$ to ensure the investment problem is well-posed. \citet{kimDynamicNonmyopicPortfolio1996} showed that in some extreme cases, investors could achieve unbounded utility in a finite time. However, they also noted that this situation does not arise in practice.    
\end{remark}

We also introduce the exponentially-transformed criterion $I(h;T,\theta,r_0) := e^{-\theta J(h;T,\theta, r_0)}$ such that:
\begin{align}\label{eq:Criterion:I}
    I(h;T,\theta,r_0) = r_0^{-\theta} \mathbf{E} \left[e^{-\theta R_T}\right]=r_0^{-\theta}\mathbf{E} \left[    
 \exp \left\{ \theta \int_{0}^{T} g(t,X_t, Y_t,h_t;\theta) dt \right\} \chi_T^h\right]    
\end{align}
Hence, for $\theta >0$, maximizing $J(h;T,\theta,r_0)$ is equivalent to minimizing $I(h;T,\theta,r_0)$. The maximization and minimization are reversed for $\theta <0$.

\begin{remark}
The risk-sensitive criteria $J$ and $I$ are consistent with utility maximization. Specifically, rescaling $I$ by the constant $-\frac{r_0^{\theta}}{\theta}$ yields $-\frac{r_0^{\theta}}{\theta} I(h;T,\theta,r_0) = -\frac{1}{\theta}\mathbf{E} \left[ \left(\frac{V_T}{L_T}\right)^{-\theta}\right]$, which is the power utility function with risk aversion parameter $\vartheta = - \theta$, for the investor's wealth relative to the benchmark. This observation connects our research with the rich literature on utility maximization, which started with \citet{Merton1969}. \citet[Section 2.2 in][]{DavisLleoBook2014} discusses this and other advantageous properties of the risk-sensitive criterion.
\end{remark}

Following \citet{na00}, we want $\chi_t^h$ in \eqref{eq:expmartX} to be an exponential martingale so that it can be used to define a new measure $\mathbb{P}^h$ via the Radon-Nikodym derivative
\begin{align}\label{def:Ph}
    \frac{d\mathbb{P}^h}{d\mathbb{P}} \Big{|}_{\mathcal{F}_t}
    = \chi_t^h, \quad t \in [0,T],
\end{align}
and an associated standard $\left(\mathcal{F}_t,\mathbb{P}^h\right)$-Wiener process
\begin{align}\label{eq:Ph:BM:Wh:X}
    W^{h}_t 
    := W_t 
    + \theta \int_{0}^t \left(\Sigma^{(1)'}\left(s, Y_s\right) h_s - \Xi'\left(s, Y_s\right) \right) ds.
\end{align}
For this purpose, in this subsection, we consider as admissible those strategies that correspond to the following
\begin{definition}[Class $\mathcal{A}_X (T)$]\label{def:control:class:AX}
A $\mathbb{R}^{m_1}$-valued control process $h_t$ is in class $\mathcal{A}_X(T)$ if the following conditions are satisfied:
\begin{enumerate}[i)]
    \item $h_t$ is progressively measurable with respect to $\left\{\mathcal{B}([0,t]) \otimes \mathcal{F}^Y_t\right\}_{t \geq 0}$ and is c\`adl\`ag;

    \item $P\left(\int_{0}^{T} \lvert h_s \rvert^2 ds < +\infty \right) = 1$;
    
    \item The following Kazamaki conditions \citep{kazamakiEquivalenceTwoConditions1977} hold: 
    \begin{align}
        \mathbf{E} \left[ \exp \left\{ -\frac{\theta}{2} \int_{0}^{t}\left(h_s' \Sigma^{(1)}\left(s, Y_s\right) - \Xi\left(s, Y_s\right) \right)dW_s\right\} \right] < \infty;
                                        \label{def:control:class:AX:Kaza1}\\
        \mathbf{E}^h \left[ \exp \left\{ 
        - \frac{1}{2} \int_0^t a^{\dagger}(s, X_s,Y_s; h_s)' \left( \Sigma^Y\Sigma^{Y'}(s,Y_s)\right)^{-1}\Sigma^Y\left(s, Y_s\right) dW^h_s \right\}\right] < \infty,     
                                        \label{def:control:class:AX:Kaza2}
    \end{align}
    where $\mathbf{E}^h[\cdot]$ denotes the expectation with respect to the measure $\mathbb{P}^h$ and 
\begin{align}\label{eq:a_dag}
a^\dagger(t, X_t, Y_t; h_t) 
&:= a^Y(t, X_t, Y_t) 
    - \theta \Sigma^Y\left(t, Y_t\right)\left(\Sigma^{(1)'}\left(t, Y_t\right) h_t - \Xi'\left(t, Y_t\right) \right),
\end{align}
        
\end{enumerate}    
\end{definition}

\begin{remark}\label{R.Kazamaki_vs_Novikov}
Like the well-known Novikov condition, the Kazamaki condition provides a sufficient condition for Dol\'eans exponentials to be exponential martingales. The main difference between the two is that while Novikov introduces an assumption on the quadratic variation, Kazamaki sets a slightly stronger condition directly on the Brownian motion  -- see II.5 in \citet{ikwa81} or III.8 in \citet{Pr05}. Hence, the Kazamaki condition is better suited for our investigation into the relation between Brownian motions under various changes of measure.
\end{remark}

\begin{remark}\label{R.4}
    Under the Kazamaki condition \eqref{def:control:class:AX:Kaza1}, the Dol\'eans-Dade exponential $\chi_t^h$ defined at \eqref{eq:expmartX} is an exponential martingale on $(\mathbb{P},\mathcal{F}_t)$ for $t \in [0,T]$. The Kazamaki condition \eqref{def:control:class:AX:Kaza2} guarantees that we can define a new measure $\bar{P}$ for which the observation process is a martingale. This is an essential ingredient in nonlinear filtering and a cornerstone of Nagai and Peng's approach.
\end{remark}

The dynamics of $(X_t, Y_t)$ becomes, always under $\mathbb{P}^h$,
\begin{numcases}{}
dX_t = \left[b(t,X_t,Y_t)dt -\theta \Lambda(t,X_t,Y_t) \left(\Sigma^{(1)'}(t, Y_t) h_t - \Xi'(t, Y_t) \right)\right]dt + \Lambda(t,X_t,Y_t) dW^h_t \label{eq:state:X:Ph}\\
dY_t= \left[a^Y(t,X_t,Y_t)-\theta \Sigma^Y(t,Y_t) \left(\Sigma^{(1)'}(t, Y_t) h_t - \Xi'(t, Y_t)\right) \right] dt+ \Sigma^Y(t,Y_t)dW^h_t\label{eq:obs:Y:Ph} 
\end{numcases}
with $a^Y(t,X_t,Y_t)$ according to \eqref{eq:Y:coef:decomp:a} and the risk-sensitive criteria are
\begin{align}\label{eq:Jh}
    J^h(h;T,\theta,r_0) = -\frac{1}{\theta} \ln \mathbf{E}^h
        \left[ r_0^{-\theta} \exp \left\{ \theta \int_{0}^{T} g(t,X_t,Y_t,h_t;\theta) dt \right\} \right]
\end{align}
Analogously to criterion $I$ at \eqref{eq:Criterion:I}, here as well, we can define an exponentially transformed criterion $I^h$ in connection with $J^h$.

\subsection{The separated control problem for a finitely parametrized filter}\label{S.4.2}

We come now to describe the problem in terms of the filter, parametrized by the vector $\zeta$ according to Assumption \ref{as:FDfilter}. Recalling the definition of $\hat a^Y(t,Z_t)$ at \eqref{eq:ahat} and the ensuing decomposition $\hat{a}(t,Z_t) = \begin{pmatrix} \hat{a}^{(1)'}(t,Z_t) & \hat{a}^{(2)'}(t,Z_t) \end{pmatrix}'$ as well as Lemma \ref{lem:Wiener:restricted}, and paralleling the description in Subsection \ref{S.4.1}, we introduce now what we call the \textit{ separated problem} with dynamics for $Z_t=(Y_t,\zeta_t)'$ given by \eqref{eq:obs:tilde} and \eqref{eq:zeta:wtilde}. 

Recall also that the risk-sensitive criterion was introduced in \eqref{eq:J} in terms of the excess return $R_t$ that satisfies \eqref{eq:R} in the full filtration ${\mathcal{F}_t}$. We shall next consider it in the observation subfiltration ${\mathcal{F}_t^Y}$. For this purpose we shall start from the dynamics of $V_t$ and $R_t$ given at
\eqref{eq:V} and \eqref{eq:R} respectively and express them in terms of ${\mathcal{F}_t^Y}$-adapted quantities. The tool to this effect is given by Lemma \ref{lem:Wiener:restricted} that concerns the entire observation vector $Y_t$ and that extends beyond the market assets. This has to be taken into account when considering the investment strategy in view of the dynamics of $V_t$ as it concerns only the assets in the market. 
For this purpose, we shall introduce the following
\begin{notation}\label{N.2}
Given an $m_1-$dimensional investment strategy $h_t$ as in Section \ref{S.4.1}, let $\tilde{h}_t $ be the $m^Y$-element vector process defined as
    $
    \tilde{h}_t  =
    \begin{pmatrix}
        0_{\ell}'    &
        h_t'         &
        0_{m_2 + k +1}'
    \end{pmatrix}'$
    where $0_{p}$ denotes the zero column vector with $p$ elements for some $p \in \mathbb{N}$, and note that  $\tilde{h}_t'\Sigma^{Y}_t  = h_t \Sigma^{(1)}_t $.
    \end{notation}
    
 Starting from  \eqref{eq:V} and using  Lemma \ref{lem:Wiener:restricted} we then obtain for the same $V_t$, but in the filtration  ${\mathcal{F}_t^Y}$, the following  
 \begin{align}\label{eq:Vtilde}
dV_t
&= V_t h_t' a^{(1)}\left(t,X_t, Y_t\right)dt
+ V_t h_t' \Sigma^{(1)} \left(t, Y_t\right)	 dW_t
                                    \nonumber\\
&= V_t h_t' a^{(1)}\left(t,X_t, Y_t\right)dt
+ V_t \tilde{h}_t' \left[ \left(\hat{a}^Y(t,Z_t) - a^Y(t,X_t,Y_t) \right)dt + \Sigma^Y\left(t, Y_t\right) d\tilde{W}_t \right]
                                \nonumber\\
&= V_t h_t' a^{(1)}\left(t,X_t, Y_t\right)dt\nonumber\\
&\>+ V_t   h_t' \left[ \left(\hat{a}^{(1)}(t,Z_t) - \frac{1}{2} d_{\Sigma^{(1)}}(t,Y_t) \right) 
- \left( a^{(1)}(t,X_t,Y_t)- \frac{1}{2} d_{\Sigma^{(1)}}(t,Y_t) \right) \right] dt
                                + V_t h_t' \Sigma^{(1)}\left(t, Y_t\right) d\tilde{W}_t
                                \nonumber\\
&= V_t h_t' \hat{a}^{(1)}(t,Z_t)dt + V_t h_t' \Sigma^{(1)}\left(t, Y_t\right) d\tilde{W}_t,
\end{align}
In full analogy, for $R_t$ in \eqref{eq:R}, we obtain in the filtration ${\mathcal{F}_t^Y},$
\begin{align}\label{eq:Rtilde}
dR_t &= \left[ 
  - \frac{1}{2} h_t'\Sigma^{(1)}\Sigma^{(1)'} \left(t, Y_t\right) h_t + h_t'\hat{a}^{(1)}(t,Z_t) + \frac{1}{2}\Xi\Xi'\left(t, Y_t\right) - \hat{c}(t,Z_t) \right] dt
                                    \nonumber\\
    & + \left(h_t'\Sigma^{(1)}\left(t, Y_t\right) - \Xi\left(t, Y_t\right) \right) d\tilde{W}_t,
\qquad
R_0 = \ln \frac{v}{l} = r_0.     
\end{align}

The risk-sensitive benchmarked criterion is now
\begin{align}\label{eq:Jhat}
\hat J(h;T,\theta,r_0) 
&:= -\frac{1}{\theta} \ln \mathbf{E} \left[ r_0^{-\theta} e^{-\theta R_T}\right]
= -\frac{1}{\theta} \ln \mathbf{E} \left[    
r_0^{-\theta} \exp \left\{ \theta \int_{0}^{T} \hat g(t,Z_t, h_t;\theta) dt \right\} \hat{\chi}_T^h
\right],
\end{align}
where, again, $\theta \in (-1,0)\cup(0,\infty)$ is the risk sensitivity parameter, $T < \infty$ is a fixed time horizon,
\begin{align}\label{eq:gh}
\hat{g}(t,z,h;\theta)
&:= \frac{\theta+1}{2} h'\Sigma^{(1)}\Sigma^{(1)'} \left(t, y\right) h 
- h' \hat{a}^{(1)}(t,z)
- \theta h'\Sigma^{(1)}\Xi'\left(t,y\right)
+ \hat{c}(t,z)
                                        \nonumber\\
&+ \frac{\theta -1}{2}\Xi\Xi'\left(t,y\right),
\end{align}
the vector $y$ contains the first $m^Y$ components of the vector $z$, and the Dol\'eans-Dade exponential $\hat{\chi}_t^h, \; t \in [0,T]$ is defined via  
\begin{align}\label{eq:expmarthat}
    \hat{\chi}_t^h 
    &:= \exp \bigg\{ -\theta \int_{0}^{t}\left(h_s' \Sigma^{(1)}\left(s, Y_s\right) - \Xi\left(s, Y_s\right) \right)d \tilde{W}_s  
                    \nonumber\\  
    &
        -\frac{1}{2} \theta^2     \int_{0}^{t} \left(h_s' \Sigma^{(1)}\left(s, Y_s\right) - \Xi\left(s, Y_s\right) \right)
        \left(\Sigma^{(1)'}\left(s, Y_s\right) h_s - \Xi'\left(s, Y_s\right) \right)ds 
    \bigg\}.
\end{align}
The exponentially-transformed criterion is here $\hat I(h;T,\theta,r_0) = e^{-\theta \hat J(h;T,\theta, r_0)}$.

As in the previous Subsection \ref{S.4.1}, we want $\hat{\chi}_t^h$ in \eqref{eq:expmarthat} to be an exponential martingale so that it can be used to define a new measure $\hat{\mathbb{P}}^h$ via the Radon-Nikodym derivative
\begin{align}\label{def:Phhat}
    \frac{d\hat{\mathbb{P}}^h}{d\mathbb{P}} \Big{|}_{\mathcal{F}^Y_t} = \hat{\chi}_t^h, \quad t \in [0,T],
\end{align}
with associated standard  $(\hat{\mathbb{P}}^h,{\mathcal{F}_t^Y)}$-Wiener process
\begin{align}\label{eq:Ph:BM:What}
    \tilde{W}^{h}_t
    := \tilde{W}_t 
    + \theta \int_{0}^t \left(\Sigma^{(1)'}\left(s, Y_s\right) h_s - \Xi'\left(s, Y_s\right) \right) ds.
\end{align}
For this purpose, this time we consider as admissible strategies those in a class that we denote by $\mathcal{A}_Z(T)$ and that is given by the following

\begin{definition}[Class $\mathcal{A}_Z(T)$]\label{def:control:class:AZ}
A $\mathbb{R}^{m_1}$-valued control process $h_t$ is in class $\mathcal{A}_Z(T)$ if the following conditions are satisfied:
\begin{enumerate}[i)]
    \item $h_t$ is progressively measurable with respect to $\left\{\mathcal{B}([0,t]) \otimes \mathcal{F}^Y_t\right\}_{t \geq 0}$ and is c\`adl\`ag;

    \item $P\left(\int_{0}^{T} \lvert h_s \rvert^2 ds < +\infty \right) = 1$;

    \item The following Kazamaki conditions hold: 
    \begin{align}
        \mathbf{E} \left[ \exp \left\{ -\frac{\theta}{2} \int_{0}^{t}\left(h_s' \Sigma^{(1)}\left(s, Y_s\right) - \Xi\left(s, Y_s\right) \right)d \tilde{W}_s\right\} \right] < \infty;
                                        \label{def:control:class:AZ:Kaza1}\\
        \hat{\mathbf{E}}^h \left[ \exp \left\{ 
        - \frac{1}{2} \int_0^t \check{a}^{Y'}(s, Z_s; h_s) \left( \Sigma^Y\Sigma^{Y'}(s,Y_s)\right)^{-1}\Sigma^Y\left(s, Y_s\right) d\tilde{W}^h_s \right\}\right] < \infty,
                                        \label{def:control:class:AZ:Kaza2}
    \end{align}
    where $\hat{\mathbf{E}}^h[\cdot]$ denotes the expectation with respect to the measure $\hat{\mathbb{P}}^h$  and
    \begin{align}\label{eq:a_check}
\check{a}^Y(t, Z_t; h_t) 
&:= \hat{a}^Y(t, Z_t) 
    - \theta \Sigma^Y\left(t, Y_t\right)\left(\Sigma^{(1)'}\left(t, Y_t\right) h_t - \Xi'\left(t, Y_t\right) \right).
\end{align}
\end{enumerate}    
\end{definition}

\begin{remark}\label{R.5}
    As already seen in Remark \ref{R.4}, the Kazamaki conditions give a sufficient condition for the Dol\'eans-Dade exponential to be an exponential martingale. Under condition \eqref{def:control:class:AZ:Kaza1}, $\hat{\chi}_t^h$ defined at \eqref{eq:expmarthat} is an exponential martingale on $\left(\mathbb{P}, \mathcal{F}^Y_t\right)$ for $t \in [0,T]$. Under condition \eqref{def:control:class:AZ:Kaza2}, $\bar{\chi}^Z_t$ defined at \eqref{def:PbarZ} below is an exponential martingale on $\left(\hat{\mathbb{P}}^h, \mathcal{F}^Y_t\right)$ for $t \in [0,T]$.
\end{remark} 

\begin{remark}
      The class $\mathcal{A}_Z(T)$ is appropriate for existing risk-sensitive investment management models. Conditions i) and ii) are standard. The first condition in iii) comes from Kuroda and Nagai's solution technique, which articulates around a change of measures \citep{kuna02}. The optimal control satisfies this condition in diffusion models \citep{kuna02,dall_RSBench,dall_BLcontinuous, dall_BBL_2020, davisRisksensitiveBenchmarkedAsset2021}. As long as we have diffusion-type models, the control is a function of $X_t$, a diffusion, so the first condition in iii) holds under standard assumptions on the coefficients. The second condition in iii) is required at \eqref{def:PbarZ} to perform a change of measure and derive the Modified Zakai Equation below in Section \ref{sec:Zakai}. The only difference with the first part of condition iii) is that $a$ plays a role via the estimate $\hat{a}^Y$.
\end{remark}

The $\hat{\mathbb{P}}^h$-dynamics of the filter parameters $\zeta_t$ and the observations $Y_t$ are then
\begin{numcases}{}
    d \zeta_t = \left[\hat{G}(t, Z_t)- \theta H(t, Z_t)\Sigma^Y\left(t, Y_t\right) \left(\Sigma^{(1)'}\left(t, Y_t\right) h_t - \Xi'\left(t, Y_t\right) \right) \right] dt
                            \nonumber\\
    \phantom{d \zeta_t = } + H(t, Z_t)\Sigma^Y\left(t, Y_t\right) d\tilde{W}^h_t, 
                    \label{eq:zeta:wh}\\
    dY_t = \check{a}^Y(t, Z_t; h_t) dt 
    + \Sigma^Y\left(t, Y_t\right) d\tilde{W}^h_t,
                    \label{eq:obs:wh}
\end{numcases}
and  $\check{a}^Y(t, Z_t; h_t) $  is as defined at \eqref{eq:a_check}
Moreover, the risk-sensitive control criterion becomes
\begin{align}
    \hat J^h(h;T,\theta,r_0) 
    := -\frac{1}{\theta} \ln \hat{\mathbf{E}}^h
        \left[ r_0^{-\theta} \exp \left\{ \theta \int_{0}^{T} \hat{g}(t,Z_t,h_t;\theta) dt \right\} \right]   
    =: - \frac{1}{\theta} \ln  \hat I^h(h;T,\theta,r_0) 
                                    \label{eq:Jhh}
\end{align}

\subsection{Equivalence of the original and the separated problems}\label{S.4.3}

The criterion \eqref{eq:Jhh} for the separated problem was obtained directly from the corresponding criterion \eqref{eq:Jh} in the original problem 
using the crucial Lemma \ref{lem:Wiener:restricted}. Nevertheless, to obtain the full equivalence of the two problem formulations under the information structure given by the filtration 
 $\mathcal{F}_t^Y$, we still have to show first that the measure ${\mathbb{P}}^h$ defined at \eqref{def:Ph}  is identical to  $\hat{\mathbb{P}}^h$   defined at \eqref{def:Phhat} and that, since $h_t$ has to be $\mathcal{F}_t^Y$-adapted in both cases, the admissible class of strategies  $\mathcal{A}_X (T)$ in the original problem (see Definition \ref{def:control:class:AX} ) coincides with $\mathcal{A}_Z (T)$ given in Definition \ref{def:control:class:AZ}. To this effect, we show the following

\begin{proposition}\label{prop:P_h:unique}
\hspace{0cm}

\begin{enumerate}[i)]
    \item The Kazamaki conditions \eqref{def:control:class:AX:Kaza1} , \eqref{def:control:class:AX:Kaza2} in Definition \ref{def:control:class:AX} and \eqref{def:control:class:AZ:Kaza1}, \eqref{def:control:class:AZ:Kaza2} in Definition \ref{def:control:class:AZ} are equivalent.    
    \item The relation between the Wiener process $W^h_t$ on $(\mathbb{P}^h, \mathcal{F}_t)$ and the Wiener process $ \tilde{W}^h_t $ on $(\hat{\mathbb{P}}^h, \mathcal{F}^Y_t)$ is
    \begin{align}\label{eq:Wh_to_tildeWh}
        \Sigma^Y\left(t, Y_t\right) d \tilde{W}_t^h 
        = \left(a^Y(t,X_t,Y_t) - \hat{a}^Y(t,Z_t) \right)dt + \Sigma^Y\left(t, Y_t\right) dW^h_t.
    \end{align}
    and the measures $\mathbb{P}^h$ defined via \eqref{def:Ph} and $\hat{\mathbb{P}}^h$ defined via \eqref{def:Phhat} are identical, that is, $\hat{\mathbb{P}}^h=\mathbb{P}^h$.
    \end{enumerate}
    
\end{proposition}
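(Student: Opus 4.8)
The plan is to treat part (ii) first — the kinematic identity \eqref{eq:Wh_to_tildeWh} being immediate, while the measure equality $\hat{\mathbb{P}}^h=\mathbb{P}^h$ is the heart of the matter — and then to deduce part (i) from it. For \eqref{eq:Wh_to_tildeWh} I would simply add $\theta\,\Sigma^Y(t,Y_t)\bigl(\Sigma^{(1)'}(t,Y_t)h_t-\Xi'(t,Y_t)\bigr)\,dt$ to both sides of the Innovations identity \eqref{eq:lem:Wiener:restricted}: by the definition \eqref{eq:Ph:BM:What} of $\tilde{W}^h_t$ the left side becomes $\Sigma^Y(t,Y_t)\,d\tilde{W}^h_t$, and by \eqref{eq:Ph:BM:Wh:X} the right side becomes $\bigl(a^Y(t,X_t,Y_t)-\hat{a}^Y(t,Z_t)\bigr)\,dt+\Sigma^Y(t,Y_t)\,dW^h_t$. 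The one point to record is that the common Girsanov drift $\theta(\Sigma^{(1)'}h_t-\Xi')$ is $\mathcal{F}^Y_t$-adapted — because $h_t$ is and $\Sigma^{(1)},\Xi$ depend only on $Y$ — so that the change of measure producing $W^h$ on $\mathcal{F}_t$ and the one producing $\tilde{W}^h$ on $\mathcal{F}^Y_t$ are each legitimate.

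For the measure identity, since $\hat{\mathbb{P}}^h$ is carried by $\mathcal{F}^Y_t$ the assertion is to be read as $\mathbb{P}^h\big|_{\mathcal{F}^Y_t}=\hat{\mathbb{P}}^h$, which by the tower rule for Radon--Nikodym densities is equivalent to $\mathbf{E}\bigl[\chi^h_t\mid\mathcal{F}^Y_t\bigr]=\hat{\chi}^h_t$. I would establish this through the reference-measure route that also underpins the MZE in Section \ref{sec:Zakai}. Using the Kazamaki conditions, form from $\mathbb{P}^h$ the measure $\bar{\mathbb{P}}$ with density the reciprocal of the Dol\'eans exponential whose martingale property is equivalent to \eqref{def:control:class:AX:Kaza2}, so that under $\bar{\mathbb{P}}$ the observation has null drift, $dY_t=\Sigma^Y(t,Y_t)\,d\bar{W}_t$; form likewise from $\hat{\mathbb{P}}^h$ the measure $\bar{\mathbb{P}}^Z$ attached to \eqref{def:control:class:AZ:Kaza2}, under which $dY_t=\Sigma^Y(t,Y_t)\,d\bar{W}^Z_t$. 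Under either measure $Y$ is a continuous martingale with bracket $\Sigma^Y\Sigma^{Y'}(t,Y_t)\,dt$, so weak uniqueness for this SDE forces $\bar{\mathbb{P}}\big|_{\mathcal{F}^Y_t}=\bar{\mathbb{P}}^Z$. It then remains to identify the $\mathcal{F}^Y_t$-density of $\mathbb{P}^h$ over $\bar{\mathbb{P}}$, namely $\mathbf{E}_{\bar{\mathbb{P}}}\bigl[(\bar{\chi}_t)^{-1}\mid\mathcal{F}^Y_t\bigr]$, with that of $\hat{\mathbb{P}}^h$ over $\bar{\mathbb{P}}^Z$: since the unnormalised density $(\bar{\chi}_t)^{-1}$ is driven solely by $dY_t$ and couples to the hidden state only through $a^\dagger$ in \eqref{eq:a_dag}, whose $\mathbb{P}^h$-conditional mean given $\mathcal{F}^Y_t$ enters precisely through $\hat{a}^Y$ — i.e. through $\check{a}^Y$ in \eqref{eq:a_check} — the generalised Kallianpur--Striebel formula of Section \ref{sec:Zakai} identifies this conditional expectation with the reciprocal of the Dol\'eans exponential attached to \eqref{def:control:class:AZ:Kaza2}; combining with $\bar{\mathbb{P}}\big|_{\mathcal{F}^Y_t}=\bar{\mathbb{P}}^Z$ yields $\mathbb{P}^h\big|_{\mathcal{F}^Y_t}=\hat{\mathbb{P}}^h$. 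An alternative, more direct route is to note that $\hat{\chi}^h$ solves $d\hat{\chi}^h_t=-\theta\,\hat{\chi}^h_t\bigl(h_t'\Sigma^{(1)}-\Xi\bigr)(t,Y_t)\,d\tilde{W}_t$, while the $\mathcal{F}^Y$-optional projection $\pi_t:=\mathbf{E}[\chi^h_t\mid\mathcal{F}^Y_t]$ of the $\mathbb{P}$-martingale $\chi^h$, whose coupling with the observation is $d\langle\chi^h,Y\rangle_t=-\theta\,\chi^h_t\,\kappa_t'\Sigma^Y\Sigma^{Y'}(t,Y_t)\,dt$ with $\kappa_t=\tilde{h}_t-e_L$ the $\mathcal{F}^Y_t$-adapted selector of Notation \ref{N.2}, satisfies — by the filtering equations together with the fact that the $\mathbb{P}$-filter of $a^Y$ is $\hat{a}^Y$ — the same linear SDE with $\pi_0=1$, so $\pi_t=\hat{\chi}^h_t$ by uniqueness. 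In either case $\mathbf{E}^{\mathbb{P}^h}[G]=\mathbf{E}[\chi^h_t G]=\mathbf{E}[\hat{\chi}^h_t G]=\mathbf{E}^{\hat{\mathbb{P}}^h}[G]$ for every bounded $\mathcal{F}^Y_t$-measurable $G$, which is the assertion.

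Part (i) then follows by Kazamaki's criterion: \eqref{def:control:class:AX:Kaza1} holds iff $\chi^h$ is a true $(\mathbb{P},\mathcal{F}_t)$-martingale, i.e. iff $\mathbf{E}[\chi^h_T]=1$, and \eqref{def:control:class:AZ:Kaza1} holds iff $\hat{\chi}^h$ is a true $(\mathbb{P},\mathcal{F}^Y_t)$-martingale, i.e. iff $\mathbf{E}[\hat{\chi}^h_T]=1$; since $\mathbf{E}[\chi^h_T]=\mathbf{E}\bigl[\mathbf{E}[\chi^h_T\mid\mathcal{F}^Y_T]\bigr]=\mathbf{E}[\hat{\chi}^h_T]$ by the measure identity, these are equivalent. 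The same reasoning, now under the common measure $\mathbb{P}^h=\hat{\mathbb{P}}^h$ and using $a^\dagger-\check{a}^Y=a^Y-\hat{a}^Y$ together with \eqref{eq:Wh_to_tildeWh}, shows that the Dol\'eans exponentials appearing in \eqref{def:control:class:AX:Kaza2} and \eqref{def:control:class:AZ:Kaza2} have the same $\mathcal{F}^Y_t$-optional projection, so those two conditions coincide as well; hence $\mathcal{A}_X(T)=\mathcal{A}_Z(T)$.

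The delicate step — the main obstacle — is the measure identity itself, equivalently the statement that projecting onto $\mathcal{F}^Y_t$ the density $\chi^h$ built from the full Brownian motion $W$ reproduces exactly the density $\hat{\chi}^h$ built from the innovation $\tilde{W}$. Making this rigorous needs the Innovations Lemma, the generalised Kallianpur--Striebel formula, and a careful check that the admissibility (Kazamaki) hypotheses make all the Dol\'eans exponentials in play genuine martingales; the structural fact being exploited throughout is that $h_t'\Sigma^{(1)}-\Xi$ is $\mathcal{F}^Y_t$-adapted and acts on the driving noise only through $\Sigma^Y$.
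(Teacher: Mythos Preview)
Your derivation of the kinematic identity \eqref{eq:Wh_to_tildeWh} --- adding the common $\mathcal{F}^Y_t$-adapted Girsanov drift to both sides of \eqref{eq:lem:Wiener:restricted} --- is correct and is essentially the paper's computation compressed into one line. For the measure identity the paper takes a much more direct path than either of your two proposals: it applies the Innovations Lemma \emph{under} $\mathbb{P}^h$ to produce a $(\mathbb{P}^h,\mathcal{F}^Y_t)$-Wiener process $\tilde{W}^X_t$, then uses exactly your add-the-drift computation to check that $\tilde{W}^X_t=\tilde{W}^h_t$, and concludes $\hat{\mathbb{P}}^h=\mathbb{P}^h$ because Lemma \ref{lem:Wiener:restricted} alters the filtration but not the measure. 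Your reference-measure route via $\bar{\mathbb{P}}$ and Kallianpur--Striebel is heavier machinery and, since it leans on Section \ref{sec:Zakai}, invites a circularity worry; the optional-projection route is viable in principle (the integrand $h_t'\Sigma^{(1)}-\Xi$ being $\mathcal{F}^Y_t$-adapted is indeed what makes the Kushner--Stratonovich equation collapse to the SDE for $\hat{\chi}^h$), but is more work than the paper's two-line commutativity argument.

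There is, however, a genuine gap in your treatment of part (i). You write that \eqref{def:control:class:AX:Kaza1} ``holds iff $\chi^h$ is a true $(\mathbb{P},\mathcal{F}_t)$-martingale'': this is false. Kazamaki's criterion is \emph{sufficient} for the martingale property, not equivalent to it. So even granting $\mathbf{E}[\chi^h_T]=\mathbf{E}[\hat{\chi}^h_T]$ from the measure identity, you cannot back out the equivalence of the Kazamaki expectations themselves --- both could fail while the exponentials are nonetheless true martingales. The paper proves (i) directly and independently of (ii): it substitutes the Innovations relation into the exponential of \eqref{def:control:class:AZ:Kaza1}, uses the block selector $\tilde{h}_t-\mathbf{1}_\Xi$ of Notation \ref{N.2} to cancel the $\Sigma^Y\Sigma^{Y'}(\Sigma^Y\Sigma^{Y'})^{-1}$ factor, and splits the result into the exponential of \eqref{def:control:class:AX:Kaza1} times a strictly positive bounded factor $\exp\{-\tfrac{\theta}{2}\int_0^t(\tilde{h}_s-\mathbf{1}_\Xi)'(a^Y-\hat{a}^Y)\,ds\}$ --- bounded because $a^Y\in C^{1,1,1}_b$. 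Finiteness of one expectation then forces finiteness of the other, and the same substitution handles the pair \eqref{def:control:class:AX:Kaza2}/\eqref{def:control:class:AZ:Kaza2}. Your argument for (i) needs to be replaced by this direct computation.
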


\begin{proof}
See Appendix \ref{app:proofs:controlpb:equival}.
\end{proof}

\begin{remark}
Although the measure $\mathbb{P}^h$ is the same, the filtrations $\mathcal{F}_t$ for $(W_t, W^h_t)$ and $\mathcal{F}^Y_t$ for $(\tilde{W}_t,  \tilde{W}^h_t )$ differ.
\end{remark}

As an immediate corollary to this Proposition  \ref{prop:P_h:unique}  we have
\begin{corollary}\label{equivad}
    The admissible classes $\mathcal{A}_X(T)$ in Definition \ref{def:control:class:AX} and $\mathcal{A}_Z (T)$ in Definition \ref{def:control:class:AZ} are strictly equivalent.
\end{corollary}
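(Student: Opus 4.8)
The plan is to verify that $\mathcal{A}_X(T)$ and $\mathcal{A}_Z(T)$ impose exactly the same restrictions on an $\mathbb{R}^{m_1}$-valued process $h_t$, so that the two sets coincide; strict equivalence then follows immediately. First I would observe that conditions i) and ii) in Definitions \ref{def:control:class:AX} and \ref{def:control:class:AZ} are stated verbatim identically: in both cases $h_t$ must be $\left\{\mathcal{B}([0,t]) \otimes \mathcal{F}^Y_t\right\}_{t \geq 0}$-progressively measurable and c\`adl\`ag, and must satisfy $P\left(\int_0^T |h_s|^2\,ds < \infty\right) = 1$. Hence these conditions are trivially equivalent across the two definitions, and the only point requiring argument is the pair of Kazamaki conditions in part iii).

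Second, I would invoke Proposition \ref{prop:P_h:unique}. Its part i) states precisely that the Kazamaki conditions \eqref{def:control:class:AX:Kaza1} and \eqref{def:control:class:AX:Kaza2} of Definition \ref{def:control:class:AX} hold if and only if the Kazamaki conditions \eqref{def:control:class:AZ:Kaza1} and \eqref{def:control:class:AZ:Kaza2} of Definition \ref{def:control:class:AZ} hold. Combining this with the trivial equivalence of conditions i) and ii), we conclude that, for any process $h_t$, membership in $\mathcal{A}_X(T)$ is equivalent to membership in $\mathcal{A}_Z(T)$; that is, the two admissible classes coincide as sets, which is the asserted strict equivalence.

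It is worth flagging the one subtlety, which however is already absorbed in Proposition \ref{prop:P_h:unique}. The second Kazamaki conditions \eqref{def:control:class:AX:Kaza2} and \eqref{def:control:class:AZ:Kaza2} are formulated under two a priori different measures, $\mathbb{P}^h$ with driving process $W^h_t$ on $\mathcal{F}_t$, versus $\hat{\mathbb{P}}^h$ with driving process $\tilde{W}^h_t$ on $\mathcal{F}^Y_t$. The chain of equivalences must therefore be taken in the right order: the equivalence of the first Kazamaki conditions \eqref{def:control:class:AX:Kaza1} $\Leftrightarrow$ \eqref{def:control:class:AZ:Kaza1} is what guarantees that $\chi_t^h$ and $\hat{\chi}_t^h$ are simultaneously genuine exponential martingales, so that $\mathbb{P}^h$ and $\hat{\mathbb{P}}^h$ are both well defined; part ii) of the Proposition then identifies $\hat{\mathbb{P}}^h = \mathbb{P}^h$ and supplies the translation \eqref{eq:Wh_to_tildeWh} between $W^h_t$ and $\tilde{W}^h_t$, which is exactly what is needed to rewrite the stochastic integral appearing in \eqref{def:control:class:AX:Kaza2} as the one in \eqref{def:control:class:AZ:Kaza2}. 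Since all of this machinery is packaged in Proposition \ref{prop:P_h:unique}, no further work is required, and this is why the statement is an immediate corollary: the real content, and the only obstacle, lies in the proof of the Proposition rather than in deducing the corollary from it.
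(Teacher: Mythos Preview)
Your proposal is correct and matches the paper's approach: the paper presents Corollary \ref{equivad} as an immediate consequence of Proposition \ref{prop:P_h:unique} without further argument, and your write-up simply makes explicit why---conditions i) and ii) are identical verbatim, while part i) of the Proposition handles the Kazamaki conditions. Your added paragraph on the logical ordering (first Kazamaki equivalence, then identification of the measures, then second Kazamaki equivalence) is a helpful clarification that the paper leaves implicit.
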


With the help of Proposition \ref{prop:P_h:unique} and Corollary \ref{equivad} we can now obtain the main result of this subsection regarding the equivalence of the original and the separated control problems when the information structure is given by the filtration  $\mathcal{F}^Y_t$, namely
\begin{theorem}\label{tequiv}
    Under Assumption \ref{as:FDfilter}, for an admissible control $h_t$ satisfying conditions (i), (ii), and \eqref{def:control:class:AX:Kaza1} in Definition \ref{def:control:class:AX} (or equivalently conditions (i), (ii), and \eqref{def:control:class:AZ:Kaza1} in Definition \ref{def:control:class:AZ}), we have $
        J^h(h;T,\theta,r_0) = \hat{J}^h(h;T,\theta,r_0) 
    $ and therefore also
    $
        I^h(h;T,\theta,r_0) = \hat{I}^h(h;T,\theta,r_0). 
    $
\end{theorem}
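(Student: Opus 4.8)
The plan is to reduce each of $J^h$ and $\hat J^h$, via its own change of measure, to the single quantity $-\tfrac1\theta\ln\mathbf{E}\big[r_0^{-\theta}e^{-\theta R_T}\big]$, where $R_t=\ln(V_t/L_t)$ is the log excess return, and then to note that this quantity is literally the same object on the two sides. As a preliminary I would record that, under the stated hypothesis, the first Kazamaki condition holds in both of its equivalent forms \eqref{def:control:class:AX:Kaza1} and \eqref{def:control:class:AZ:Kaza1} (Proposition \ref{prop:P_h:unique}(i)); hence, by Remarks \ref{R.4} and \ref{R.5}, $\chi_t^h$ is an exponential martingale on $(\mathbb{P},\mathcal{F}_t)$ and $\hat\chi_t^h$ one on $(\mathbb{P},\mathcal{F}^Y_t)$, so that $\mathbb{P}^h$ in \eqref{def:Ph} and $\hat{\mathbb{P}}^h$ in \eqref{def:Phhat} are probability measures of total mass one on $\mathcal{F}_T$, respectively on $\mathcal{F}^Y_T$.

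Next I would carry out the two reductions. For the original problem, starting from \eqref{eq:Jh} and applying the Radon--Nikodym relation \eqref{def:Ph} to the $\mathcal{F}_T$-measurable integrand $r_0^{-\theta}\exp\{\theta\int_0^T g(t,X_t,Y_t,h_t;\theta)\,dt\}$, I get $J^h(h;T,\theta,r_0)=-\tfrac1\theta\ln\mathbf{E}\big[r_0^{-\theta}\exp\{\theta\int_0^T g\,dt\}\,\chi_T^h\big]$, and the second equality in \eqref{eq:J} (equivalently \eqref{eq:Criterion:I}) identifies the right-hand side with $-\tfrac1\theta\ln\mathbf{E}[r_0^{-\theta}e^{-\theta R_T}]=J(h;T,\theta,r_0)$. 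For the separated problem the argument is symmetric, with one point requiring care: the change of measure \eqref{def:Phhat} lives only on $\mathcal{F}^Y_T$, so I must check that the integrand $r_0^{-\theta}\exp\{\theta\int_0^T\hat g(t,Z_t,h_t;\theta)\,dt\}$ in \eqref{eq:Jhh} is $\mathcal{F}^Y_T$-measurable. It is, because $\hat g$ depends only on $(t,Z_t,h_t)$ with $Z_t=(Y_t,\zeta_t)$ and $h_t$ both $\mathcal{F}^Y_t$-adapted ($\zeta_t$ being $\mathcal{F}^Y_t$-measurable by Assumption \ref{as:FDfilter}, cf.\ \eqref{eq:zeta:wtilde}), and $\hat\chi_T^h$ is $\mathcal{F}^Y_T$-measurable by \eqref{eq:expmarthat}. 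This yields $\hat J^h(h;T,\theta,r_0)=-\tfrac1\theta\ln\mathbf{E}\big[r_0^{-\theta}\exp\{\theta\int_0^T\hat g\,dt\}\,\hat\chi_T^h\big]$, which by the second equality in \eqref{eq:Jhat} equals $-\tfrac1\theta\ln\mathbf{E}[r_0^{-\theta}e^{-\theta R_T}]=\hat J(h;T,\theta,r_0)$.

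Finally I would close the chain by observing that the random variable $R_T$ occurring in these two identities is one and the same: $R_t=\ln(V_t/L_t)$ is fixed once $h$ is fixed, and \eqref{eq:Rtilde} is not a new model but merely the re-expression of \eqref{eq:R} in the subfiltration $\mathcal{F}^Y_t$ produced by the Innovations Lemma \ref{lem:Wiener:restricted} (cf.\ Remark \ref{rk:Wtilde:blocks}); in particular $R_T$ is $\mathcal{F}^Y_T$-measurable, which is also what makes the representation used in the second equality of \eqref{eq:Jhat} legitimate. Hence $J^h=J=\hat J=\hat J^h$, and applying $x\mapsto e^{-\theta x}$ gives $I^h=\hat I^h$. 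I expect the only genuinely substantive point to be precisely this assertion that \eqref{eq:R} and \eqref{eq:Rtilde} describe the same $R_T$; it rests on Lemma \ref{lem:Wiener:restricted} together with pathwise uniqueness and is already available, so the remainder is bookkeeping. It is worth stressing that the equality $J^h=\hat J^h$ does \emph{not} follow from a naive ``the conditional expectation passes through'' argument: $\hat g$ is a conditional mean of $g$ only pointwise in time, while exponentials of time-integrals do not commute with conditioning; the correct reason is that, in their respective filtrations, the Dol\'eans exponentials $\chi_T^h$ and $\hat\chi_T^h$ reassemble exactly the same random variable $e^{-\theta R_T}$. (Alternatively one could run the whole argument directly under the common measure, since $\mathbb{P}^h=\hat{\mathbb{P}}^h$ by Proposition \ref{prop:P_h:unique}(ii), but this is not needed here.)
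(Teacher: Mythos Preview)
Your argument is correct and is essentially the route the paper intends: reduce both $J^h$ and $\hat J^h$, via their respective Radon--Nikodym derivatives $\chi_T^h$ and $\hat\chi_T^h$ (which are genuine martingales by the Kazamaki conditions established in Proposition~\ref{prop:P_h:unique}(i)), to the common quantity $-\tfrac1\theta\ln\mathbf{E}[r_0^{-\theta}e^{-\theta R_T}]$, and then observe that $R_T$ is literally the same random variable in both representations because \eqref{eq:Rtilde} is just \eqref{eq:R} rewritten in $\mathcal{F}^Y_t$ via Lemma~\ref{lem:Wiener:restricted}. The paper does not spell out a proof of Theorem~\ref{tequiv}; it presents the result as following from Proposition~\ref{prop:P_h:unique} and Corollary~\ref{equivad}, after having already remarked (at the opening of Subsection~\ref{S.4.3}) that ``the criterion \eqref{eq:Jhh} for the separated problem was obtained directly from the corresponding criterion \eqref{eq:Jh} in the original problem using the crucial Lemma~\ref{lem:Wiener:restricted}.'' Your write-up makes this chain explicit.

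One organizational nuance: you correctly note that Proposition~\ref{prop:P_h:unique}(ii) (the identity $\mathbb{P}^h=\hat{\mathbb{P}}^h$) is not actually needed for the criterion equality, only part~(i). The paper packages both parts together before stating Theorem~\ref{tequiv}, giving the impression that (ii) is required; in fact (ii) serves the broader aim of establishing full equivalence of the two problem \emph{formulations} (same measure, same Brownian motions, same admissible class), not just equality of the criteria. Your parenthetical remark that one could alternatively argue under the common measure $\mathbb{P}^h=\hat{\mathbb{P}}^h$ is a fair observation, though as you say it would still require identifying the integrands, so it is no shortcut.
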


Finally, as a further corollary to Proposition \ref{prop:P_h:unique} we obtain also the following property for the change of Brownian motion, namely
\begin{corollary}
The translations of the Brownian motion induced by Lemma 3.2 and by the change of measure from $\mathbb{P}$ to $\mathbb{P}^h$ are commutative. Said otherwise, it does not matter whether we apply Lemma 3.2 before or after performing the change of measure; we will get to the same Brownian motion $\tilde{W}_t^h$. 
\end{corollary}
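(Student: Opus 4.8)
The statement compares two routes from the $(\mathbb{P},\mathcal{F}_t)$-Wiener process $W_t$ to an $\mathcal{F}^Y_t$-adapted Wiener process under the investor's measure. Route (A): first apply Lemma~\ref{lem:Wiener:restricted} to obtain $\tilde{W}_t$ on $(\mathbb{P},\mathcal{F}^Y_t)$, then perform the change of measure $\mathbb{P}\to\hat{\mathbb{P}}^h$ of \eqref{def:Phhat}, which by \eqref{eq:Ph:BM:What} turns $\tilde{W}_t$ into $\tilde{W}_t^h=\tilde{W}_t+\theta\int_0^t(\Sigma^{(1)'}(s,Y_s)h_s-\Xi'(s,Y_s))\,ds$. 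Route (B): first perform the change of measure $\mathbb{P}\to\mathbb{P}^h$ of \eqref{def:Ph}, which by \eqref{eq:Ph:BM:Wh:X} turns $W_t$ into $W_t^h=W_t+\theta\int_0^t(\Sigma^{(1)'}(s,Y_s)h_s-\Xi'(s,Y_s))\,ds$, then apply Lemma~\ref{lem:Wiener:restricted} under $\mathbb{P}^h$ to the system $(X_t,Y_t)$ with dynamics \eqref{eq:state:X:Ph}--\eqref{eq:obs:Y:Ph}, producing some $(\mathbb{P}^h,\mathcal{F}^Y_t)$-Wiener process $W'_t$. The plan is to show $W'_t=\tilde{W}_t^h$; since Proposition~\ref{prop:P_h:unique}(ii) already gives $\hat{\mathbb{P}}^h=\mathbb{P}^h$, the two routes operate under the same measure and this identification is all that remains.

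First I would verify that the hypotheses of Lemma~\ref{lem:Wiener:restricted} survive the passage to $\mathbb{P}^h$: the change of measure is a bona fide Girsanov transformation with $\mathbb{P}^h\sim\mathbb{P}$, and the observation diffusion $\Sigma^Y(t,Y_t)$ in \eqref{eq:obs:Y:Ph} still does not depend on $X_t$, so the lemma (whose proof only uses this structure) applies verbatim and yields a process $W'_t$ with
\[
\Sigma^Y(t,Y_t)\,dW'_t=\bigl(a^\dagger(t,X_t,Y_t;h_t)-\mathbf{E}^h[a^\dagger(t,X_t,Y_t;h_t)\mid\mathcal{F}^Y_t]\bigr)\,dt+\Sigma^Y(t,Y_t)\,dW^h_t,
\]
with $a^\dagger$ as in \eqref{eq:a_dag}. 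Next I would compute the conditional mean: writing $a^\dagger=a^Y-\theta\Sigma^Y(\Sigma^{(1)'}h-\Xi')$ and using that the second summand is $\mathcal{F}^Y_t$-measurable ($\Sigma^Y,\Sigma^{(1)},\Xi$ are functions of $Y_t$ and $h_t$ is $\mathcal{F}^Y_t$-adapted), the Girsanov drift cancels in the innovation and one is left with $a^\dagger-\mathbf{E}^h[a^\dagger\mid\mathcal{F}^Y_t]=a^Y-\mathbf{E}^h[a^Y(t,X_t,Y_t)\mid\mathcal{F}^Y_t]$. The crucial input---exactly the filtering fact established in the proof of Proposition~\ref{prop:P_h:unique}---is that the $\mathcal{F}^Y_t$-conditional law of $X_t$ is unchanged by the measure change (the added drift $-\theta(\Sigma^{(1)'}h-\Xi')$ being $\mathcal{F}^Y_t$-adapted, the Kallianpur--Striebel functional representing the filter coincides under $\mathbb{P}$ and $\mathbb{P}^h$), so $\mathbf{E}^h[a^Y(t,X_t,Y_t)\mid\mathcal{F}^Y_t]=\hat{a}^Y(t,Z_t)$. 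Hence $W'_t$ satisfies
\[
\Sigma^Y(t,Y_t)\,dW'_t=\bigl(a^Y(t,X_t,Y_t)-\hat{a}^Y(t,Z_t)\bigr)\,dt+\Sigma^Y(t,Y_t)\,dW^h_t,
\]
which is precisely \eqref{eq:Wh_to_tildeWh}; by Proposition~\ref{prop:P_h:unique}(ii) the process $\tilde{W}_t^h$ satisfies the same identity. Since Lemma~\ref{lem:Wiener:restricted} builds the $d$-dimensional Wiener process as the canonical translation $d(\cdot)=dW^h+\Sigma^{Y'}(\Sigma^Y\Sigma^{Y'})^{-1}(a^Y-\hat{a}^Y)\,dt$ (cf. Remark~\ref{rk:innovation} and the construction in the proof of Lemma~\ref{lem:Wiener:restricted}), and both $W'_t$ and $\tilde{W}_t^h$ are translations of $W_t^h$ by this same, uniquely determined amount, it follows that $W'_t=\tilde{W}_t^h$, $\mathbb{P}^h$-a.s., which is the asserted commutativity.

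The hard part will be the invariance of the $\mathcal{F}^Y_t$-conditional law of $X_t$ under $\mathbb{P}\to\mathbb{P}^h$---this is what lets the nonlinear estimate $\hat{a}^Y$ reappear unchanged along route (B)---but it is not automatic and hinges on the $\mathcal{F}^Y_t$-adaptedness of the Girsanov drift; in the write-up this can simply be quoted from the proof of Proposition~\ref{prop:P_h:unique} rather than re-derived. A minor secondary issue is the dimension bookkeeping: $\Sigma^Y(t,Y_t)$ is $m^Y\times d$ with $d>m^Y$, so $\Sigma^Y dW'_t=\Sigma^Y d\tilde{W}^h_t$ does not by itself force $W'_t=\tilde{W}^h_t$; this is resolved by noting that Lemma~\ref{lem:Wiener:restricted} fixes the translation through the right inverse $\Sigma^{Y'}(\Sigma^Y\Sigma^{Y'})^{-1}$, which is the same object in both constructions. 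In short, the corollary is essentially a re-reading of Proposition~\ref{prop:P_h:unique}(ii): equation \eqref{eq:Wh_to_tildeWh} is the statement that running Lemma~\ref{lem:Wiener:restricted} after the change of measure lands on $\tilde{W}_t^h$, while \eqref{eq:Ph:BM:What} is the statement that running it before does so as well.
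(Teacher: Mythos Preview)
Your proposal is correct and follows the same route as the paper. The paper does not give a separate proof of this corollary; it simply states it as a further consequence of Proposition~\ref{prop:P_h:unique}, whose proof in Appendix~\ref{app:proofs:controlpb:equival} already carries out exactly your Route~(B) computation---applying Lemma~\ref{lem:Wiener:restricted} under $\mathbb{P}^h$ to obtain a process $\tilde{W}^X_t$, then showing $\tilde{W}^X_t=\tilde{W}^h_t$ by unwinding the definitions of $W^h_t$ and $\tilde{W}_t$. Your final sentence is spot on: the corollary is a re-reading of Proposition~\ref{prop:P_h:unique}(ii).

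One remark on a point where you are more careful than the paper: you correctly flag that applying Lemma~\ref{lem:Wiener:restricted} under $\mathbb{P}^h$ produces the innovation $a^\dagger-\mathbf{E}^h[a^\dagger\mid\mathcal{F}^Y_t]$, and that identifying this with $a^Y-\hat{a}^Y$ requires both the $\mathcal{F}^Y_t$-measurability of the Girsanov drift (trivial) and the invariance $\mathbf{E}^h[a^Y(t,X_t,Y_t)\mid\mathcal{F}^Y_t]=\hat{a}^Y(t,Z_t)$. The paper's proof of Proposition~\ref{prop:P_h:unique}(ii) simply writes $\hat{a}^Y(t,Z_t)$ at this step without comment, so your observation that this filter-invariance is the ``hard part'' is a genuine clarification rather than something you can quote from the paper. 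Your justification via $\mathcal{F}^Y_t$-adaptedness of the Girsanov drift is the right one. Your handling of the dimension issue (that $\Sigma^Y$ is not square, so equality of $\Sigma^Y\,dW'$ and $\Sigma^Y\,d\tilde{W}^h$ must be supplemented by the canonical right-inverse construction) is also a useful addition that the paper leaves implicit.
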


\section{Modified Zakai Equation}\label{sec:Zakai}

The so-called modified Zakai equation (MZE) was introduced in \citet{na00} and \citet{nape02} as a useful tool in view of the solution of stochastic control problems under partial observation, in particular for the solution of risk-sensitive problems of the type considered in this paper. Appendix \ref{app:MZE:NagaiPeng} summarizes this approach and its main results. In this section, we derive an MZE in the more general setting where, according to Assumption \ref{as:FDfilter} a finite-dimensional filter process $\zeta_t$ exists. We adapt the approach by Nagai  
and Peng to this setting, more precisely to the separated problem with  
a finite-dimensional filter as formulated in Subsection \ref{S.4.2}. A major difference exists between our setting and that of Nagai and Peng. In Nagai and Peng's case, the filter enters only at the level of the solution of the MZE, whereas in our case, the filter intervenes right from the beginning in the formulation of the problem in its separated form. This difference implies that the approach of Nagai and Peng results in a stochastic partial differential equation for a certain operator. This SPDE is intricate to formalize and challenging to solve in general. Nevertheless, Nagai and Peng show that in the linear-Gaussian case, one obtains an explicit analytic solution that exploits the specificity of the Kalman Filter. As a byproduct, they also show that a separation of estimation and control holds in this case. In fact, one could adapt their result to the more general case of nonlinear, but Gaussian, dynamics where the filter is the Kalman filter applied after linearizing the nonlinear coefficients (\textit{Extended Kalman Filter}).

The fact that we let the filter intervene right from the beginning in the formulation of the problem in its separated form leads to a deterministic PDE given in Theorem \ref{theo:Zakai:Z} below that concerns the MZE for the separated problem relating to a corresponding operator $q_Z^h(t)(\varphi_t)$ given in Definition  \ref{def:qZ} below. Also this equation is not easy to formalize in a standard way but, by its very derivation,  it is satisfied by the operator $q_Z^h(t)(\varphi_t)$ (see Definition  \ref{def:qZ} below) that, for the test function $\varphi_t=1$, equals the value function of the separated problem which was derived directly via our approach without passing through the MZE. This allows us to investigate better when and in what form a separation of estimation and control holds by using either an exact or an approximate finite-dimensional filter (see Section
\ref{sec:Separability} and the examples in Section \ref{sec:examples}).

Before coming to derive the equation corresponding to the MZE approach for the separated problem,
we also state and prove a result that extends the existing Kallianpur-Striebel formula to our setting. This generalized Kallianpur-Striebel formula relates the risk-sensitive criterion $I^h$ under the control measure $\mathbb{P}^h$ with that evaluated under the filtering measure $\bar{\mathbb{P}}$:

\begin{proposition}[Generalized Kallianpur-Striebel Formula]\label{prop:GKSF}
Assume $\varphi \in C_b^{1,2}$, then
    \begin{align}\label{eq:GKSF}
       \mathbf{E}^h \left[ \varphi(t,X_t;Y_t) e^{\theta \int_0^t  g(s, X_s, Y_s,h_s;\theta) ds} \mid \mathcal{F}^Y_t \right]
       &= \frac{\bar{\mathbf{E}} \left[ \varphi(t,
       X_t; Y_t) e^{\theta \int_0^t g(s, X_s, Y_s, h_s;\theta) ds} \Psi^X_t \mid \mathcal{F}^Y_t \right]}{\bar{\mathbf{E}} \left[\Psi^X_t \mid \mathcal{F}^Y_t \right]}
                                                               \nonumber\\
       &= \frac{q_X^h(t)(\varphi_t)}{\bar{\mathbf{E}} \left[ \Psi^X_t \mid \mathcal{F}^Y_t \right]},
\end{align}
where $\mathbf{E}^h \left[ \cdot \right]$ is the expectation with respect to the measure $\mathbb{P}^h$ and $\bar{\mathbf{E}} \left[ \cdot \right]$ is the expectation with respect to the measure $\bar{\mathbb{P}}$.
\end{proposition}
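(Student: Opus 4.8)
The plan is to adapt the classical Kallianpur–Striebel argument to the present risk-sensitive setting, where the extra exponential weight $e^{\theta\int_0^t g(s,X_s,Y_s,h_s;\theta)\,ds}$ is simply carried along as part of the ``test functional.'' First I would introduce, on the filtration $\mathcal F_t$, the reference (filtering) measure $\bar{\mathbb P}$ via a Radon–Nikodym derivative $\Psi^X_t$ constructed by a Girsanov transformation that removes the drift $a^Y(t,X_t,Y_t)$ of the observation process $Y_t$, so that under $\bar{\mathbb P}$ the process $Y_t$ becomes a (local) martingale — more precisely a $\bar{\mathbb P}$-Wiener process after normalization by $(\Sigma^Y\Sigma^{Y\prime})^{-1/2}$. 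The Kazamaki condition \eqref{def:control:class:AX:Kaza2} (equivalently \eqref{def:control:class:AZ:Kaza2}, by Proposition \ref{prop:P_h:unique}(i)) is exactly what guarantees that the relevant Doléans–Dade exponential $\bar\chi$, hence $\Psi^X_t=(\bar\chi)^{-1}$ viewed the other way, is a true martingale on $[0,T]$, so the change of measure is legitimate. I would then record the abstract Bayes formula: for any $\mathcal F_t$-measurable integrable random variable $\Theta_t$,
\[
\mathbf E^h\!\left[\Theta_t\mid\mathcal F^Y_t\right]
=\frac{\bar{\mathbf E}\!\left[\Theta_t\,\Psi^X_t\mid\mathcal F^Y_t\right]}{\bar{\mathbf E}\!\left[\Psi^X_t\mid\mathcal F^Y_t\right]},
\]
which holds whenever $d\mathbb P^h/d\bar{\mathbb P}|_{\mathcal F_t}=\Psi^X_t$.

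Next I would apply this identity with the specific choice
\[
\Theta_t=\varphi(t,X_t;Y_t)\,\exp\!\Big\{\theta\!\int_0^t g(s,X_s,Y_s,h_s;\theta)\,ds\Big\},
\]
which is $\mathcal F_t$-measurable and, using $\varphi\in C_b^{1,2}$ together with the boundedness assumptions on the coefficients and the Kazamaki/integrability conditions defining the admissible class, is $\mathbb P^h$- and $\bar{\mathbb P}$-integrable. This immediately yields the first equality in \eqref{eq:GKSF}. The second equality is then just the definition of the unnormalized conditional expectation operator,
\[
q_X^h(t)(\varphi_t):=\bar{\mathbf E}\!\left[\varphi(t,X_t;Y_t)\,e^{\theta\int_0^t g(s,X_s,Y_s,h_s;\theta)\,ds}\,\Psi^X_t\mid\mathcal F^Y_t\right],
\]
as introduced in (or to be recalled from) the Nagai–Peng setup summarized in Appendix \ref{app:MZE:NagaiPeng}; so this step is essentially notational.

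The main obstacle, and the place where care is genuinely needed, is the justification of the change of measure itself: showing that $\Psi^X_t$ is a genuine $(\bar{\mathbb P},\mathcal F_t)$-martingale (not merely a local martingale or supermartingale) on the whole interval $[0,T]$, and that the relevant quantities are integrable so that all conditional expectations are well defined and finite. This is where I would lean on Proposition \ref{prop:P_h:unique}(i) to transfer the Kazamaki condition \eqref{def:control:class:AZ:Kaza2} to \eqref{def:control:class:AX:Kaza2}, and on Remark \ref{R.4} (and Remark \ref{R.5}), which already assert that these Kazamaki conditions make the corresponding Doléans–Dade exponentials true martingales. A secondary technical point is verifying that the exponential weight $e^{\theta\int_0^t g\,ds}$ does not destroy integrability — for $\theta>0$ and bounded coefficients the running integral is bounded, and for $\theta\in(-1,0)$ one uses the boundedness of $\Sigma^{(1)}$, $\Xi$ and the $L^2$-control on $h$ from condition (ii), possibly together with a Hölder-inequality splitting between $\Psi^X_t$ and the weight. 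Once these integrability and martingale properties are in place, the Bayes formula applies verbatim and the proposition follows; the detailed verification is deferred to Appendix \ref{app:proof:GKSF}.
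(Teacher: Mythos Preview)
Your proposal is correct and follows essentially the same route as the paper: both arguments reduce the statement to the abstract Bayes formula for conditional expectations under an absolutely continuous change of measure, applied with the weighted test functional $\Theta_t=\varphi(t,X_t;Y_t)\,e^{\theta\int_0^t g\,ds}$. The only presentational difference is that the paper proves the Bayes identity from scratch (first checking $\bar{\mathbf E}[\Psi^X_t\mid\mathcal F^Y_t]>0$ a.s., then verifying the identity by testing against bounded $\mathcal F^Y_t$-measurable $\xi_t$ and using the tower property), whereas you invoke it as a known result and focus on the martingale/integrability hypotheses; the second equality is, as you say, just Definition~\ref{def:qX}.
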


\begin{proof} [Proof of Proposition  \ref{prop:GKSF}]
See Appendix \ref{app:proof:GKSF}.
\end{proof}


We come now to the problem formulation as described in Section \ref{S.4.2}, namely the separated problem formulated in terms of the filter that is characterized by a finite-dimensional parameter vector $\zeta_t$.

Analogously to \citet{na00}, it is convenient to have $Y_t$ a martingale by passing to a new measure. We denote this measure by $\bar{\mathbb{P}}$.
Assuming that $h_t\in  \mathcal{A}_Z(T)$ (see Definition \ref {def:control:class:AZ}) let
\begin{multline}\label{def:PbarZ}
       \bar{\chi}^Z_t
       := \exp \left\{
           - \int_0^t \check{a}^{Y}(s, Z_s; h_s)' \left( \Sigma^Y\Sigma^{Y'}(s,Y_s)\right)^{-1}\Sigma^Y\left(s, Y_s\right) d\tilde{W}^h_s
                           \right. \\
          \left.
          - \frac{1}{2}\int_0^t \check{a}^{Y}(s, Z_s; h_s)' \left( \Sigma^Y \Sigma^{Y'}\left(s, Y_s\right)\right)^{-1}\check{a}^Y(s,Z_s; h_s) ds \right\}
\end{multline}
for $t \in [0,T]$ and with $\check{a}^Y(t, Z_t; h_t) $  as in \eqref{eq:a_check}. It is a martingale on $(\bar{\mathbb{P}}, \mathcal{F}^Y_t)$ corresponding to the Radon-Nikodym derivative $\frac{d\bar{\mathbb{P}}}{d\hat{\mathbb{P}}^h} \Big{|}_{\mathcal{F}^Y_t}.$  The process
\begin{align}\label{eq:Wiener:PbarZ}
       \check{W}_t
       :=\tilde{W}^{h}_t
       + \int_{0}^t \Sigma^{Y'}\left(s, Y_s\right)\left( \Sigma^Y\Sigma^{Y'}\left(s, Y_s\right)\right)^{-1} \check{a}^Y(s, Z_s; h_s) ds
\end{align}
is then a standard $(\bar{\mathbb{P}}, \mathcal{F}^Y_t)-$ Wiener process  and the $\bar{\mathbb{P}}$-dynamics of the filter parameter $\zeta_t$ and of the observation $Y_t$ are
\begin{numcases}{}
       d \zeta_t = G(t, Z_t)dt + H(t, Z_t)dY_t,
       \label{eq:zeta:wbar}\\
       dY_t = \Sigma^Y\left(t, Y_t\right) d\check{W}_t.
       \label{eq:Y1:Pbar}
\end{numcases}
The corresponding risk-sensitive criterion $\bar{J}(h;T,\theta,r_0)$ and exponentially-transformed criterion $\bar{I}(h;T,\theta,r_0)$ are
\begin{numcases}{}
       \bar{J}(h;T,\theta,r_0) =  -\frac{1}{\theta} \ln \bar{\mathbf{E}} \left[ r_0^{-\theta} \exp \left\{ \theta \int_0^T \hat{g}(t, Z_t, h_t;\theta) dt\right\} \Psi^Z_T\right].
       \label{eq:Jbar}\\
       \bar{I}(h;T,\theta,r_0) = r_0^{-\theta} \bar{\mathbf{E}} \left[\exp \left\{ \theta \int_0^T \hat{g}(t, Z_t, h_t;\theta) dt\right\} \Psi^Z_T\right].
       \label{eq:Ibar}
\end{numcases}
where we have put  $\Psi^Z_t:= \left(\bar{\chi}^Z_t\right)^{-1}$ and $\hat g$ is as defined at \eqref{eq:gh}.

Inspired by \citet{na00}, we introduce the following

\begin{definition}\label{def:qZ}
For $h_t \in \mathcal{A}_Z(T)$ and $\varphi(t,\zeta;y): [0,T] \times \mathbb{R}^{q} \times \mathbb{R}^{m^Y}  \to \mathbb{R}$ a $C_b^{1,2}$ test function indexed by $y$, set
\begin{align}\label{eq:qZ}
       q_Z^h(t)(\varphi_t) := \bar{\mathbf{E}} \left[e^{ \theta \int_0^t \hat{g}(s, Z_s, h_s; \theta)ds} \Psi^Z_t \varphi(t, \zeta_t; Y_t)  \right],
\end{align}
so that
$
\bar{I}(h;T,\theta,r_0)
       = r_0^{-\theta}  \left[ q_Z^h(T)(1) \right].
$
\end{definition}

\begin{remark}\label{R-operators}
Contrary to \citet{na00}, here  the operator $q_Z^h(t)(\varphi_t)$ is deterministic and has therefore to satisfy a deterministic PDE which will be given in the next Theorem \ref{theo:Zakai:Z} and which concerns the MZE
for the separated problem.
\end{remark}

In view of this theorem let  $\hat L^h$ be the generator of the process $\zeta_t$ under the measure $\hat{\mathbb{P}}^h$, with $y$ treated as a fixed parameter. We have now

\begin{theorem}\label{theo:Zakai:Z}
The operator $q_Z^h(t)(\varphi_t)$ from Definition  \ref{def:qZ} satisfies the following PDE
\begin{align}\label{eq:Zakai:PDE:Z:SPDE}
&q_Z^h(t)(\varphi_t) - q_Z^h(0)(\varphi_0)
                    \nonumber\\
&= \int_0^t q_Z^h(s)\left( 
        \frac{\partial \varphi}{\partial t}(s,\zeta_s;Y_s) 
        + \hat L^h\varphi(s,\zeta_s,Y_s) 
        + \theta \hat g(s,Z_s,h_s;\theta) \varphi(s,\zeta_s;Y_s)\right) ds.
\end{align}  
where
\begin{align}\label{eq:operator:Lh:Z}
    \hat L^h\varphi(t,\zeta;y) 
    :=& \frac{1}{2} \tr \left\{ H(t, \zeta;y)\Sigma^Y\left(t; y\right)\Sigma^{Y'}\left(t; y\right) H'(t, \zeta;y) D^2\varphi \right\}
                    \nonumber\\
    +& \left[\hat{G}(t, \zeta;y)- \theta H(t, \zeta;y)\Sigma^Y\left(t;y\right) \left(\Sigma^{(1)'}\left(t;y\right) h - \Xi'\left(t;y\right) \right) \right] D\varphi
                    \nonumber\\
    =& \frac{1}{2} \sum_{i,j=1}^q \left\{ H(t, \zeta;y)\Sigma^Y\left(t; y\right)\Sigma^{Y'}\left(t; y\right) H'(t, \zeta;y)\right\}_{ij} \frac{\partial^2 \varphi}{\partial \zeta_i \partial \zeta_j}
                    \nonumber\\
    +& \sum_{i=1}^q \left\{\hat{G}(t, \zeta;y)- \theta H(t, \zeta;y)\Sigma^Y\left(t;y\right) \left(\Sigma^{(1)'}\left(t;y\right) h - \Xi'\left(t;y\right) \right) \right\}_i \frac{\partial \varphi}{\partial \zeta_i}
\end{align}
is the $\hat{\mathbb{P}}^h$-generator related to the process $\zeta_t$, $D\varphi = \begin{pmatrix} \frac{\partial \varphi}{\partial \zeta_1} & \ldots & \frac{\partial \varphi}{\partial \zeta_i} & \ldots & \frac{\partial \varphi}{\partial \zeta_q} \end{pmatrix}$ and $D^2\varphi = \left[ \frac{\partial^2 \varphi}{\partial \zeta_i \zeta_j} \right], i,j = 1, \ldots, q$.
\end{theorem}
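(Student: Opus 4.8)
The plan is to apply the Itô formula to the process $t \mapsto e^{\theta \int_0^t \hat g(s,Z_s,h_s;\theta)\,ds}\,\Psi^Z_t\,\varphi(t,\zeta_t;Y_t)$ under the filtering measure $\bar{\mathbb{P}}$, and then to take the $\bar{\mathbf{E}}$-expectation. Under $\bar{\mathbb{P}}$, the dynamics of the observation $Y_t$ is the driftless equation \eqref{eq:Y1:Pbar}, $dY_t = \Sigma^Y(t,Y_t)\,d\check W_t$, while the filter parameter $\zeta_t$ satisfies \eqref{eq:zeta:wbar}, $d\zeta_t = G(t,Z_t)\,dt + H(t,Z_t)\,dY_t = G(t,Z_t)\,dt + H(t,Z_t)\Sigma^Y(t,Y_t)\,d\check W_t$. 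The key structural point, mirroring the device in \citet{na00}, is that here $\varphi$ is a test function of $(t,\zeta;y)$ only with $y$ treated as a \emph{fixed parameter}, so when one applies Itô to $\varphi(t,\zeta_t;Y_t)$ one only differentiates in $t$ and in the $\zeta$-argument; the dependence on $Y_t$ enters solely through the coefficients $G,H,\Sigma^Y$ evaluated along the trajectory, and this is exactly what the $\hat{\mathbb{P}}^h$-generator $\hat L^h$ of $\zeta_t$ in \eqref{eq:operator:Lh:Z} records. (One should note that $\hat L^h$, and not the $\bar{\mathbb{P}}$-generator, appears in the final statement; the explanation is that the $\bar{\mathbb{P}}$-drift of $\zeta_t$ together with the drift contributed by $\Psi^Z_t = (\bar\chi^Z_t)^{-1}$ recombine — via the definition \eqref{def:PbarZ} of $\bar\chi^Z$ and \eqref{eq:Wiener:PbarZ} relating $\check W$ to $\tilde W^h$ — into exactly the $\hat{\mathbb{P}}^h$-drift $\hat G - \theta H\Sigma^Y(\Sigma^{(1)'}h - \Xi')$. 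This is the point where one must be careful.)

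Concretely, the steps are: (i) compute $d\Psi^Z_t$ from \eqref{def:PbarZ}, i.e. since $\Psi^Z_t = (\bar\chi^Z_t)^{-1}$ and $\bar\chi^Z$ is the Doléans exponential with integrand $-\check a^{Y\prime}(\Sigma^Y\Sigma^{Y\prime})^{-1}\Sigma^Y$, obtain $d\Psi^Z_t = \Psi^Z_t\,\check a^{Y\prime}(t,Z_t;h_t)(\Sigma^Y\Sigma^{Y\prime})^{-1}\Sigma^Y(t,Y_t)\,d\check W_t$ plus the appropriate quadratic-variation correction, and re-express the driving noise in terms of $\tilde W^h$ using \eqref{eq:Wiener:PbarZ}; (ii) apply the Itô product/chain rule to the triple product $M_t := e^{\theta\int_0^t \hat g\,ds}\,\Psi^Z_t\,\varphi(t,\zeta_t;Y_t)$, collecting the $dt$-terms into $\frac{\partial\varphi}{\partial t} + (\text{second-order }\zeta\text{-term}) + (\text{first-order }\zeta\text{-term}) + \theta\hat g\,\varphi$, where the cross-variation between $d\Psi^Z$ and $d\varphi(t,\zeta_t;Y_t)$ produces precisely the drift-correction that turns the $\bar{\mathbb{P}}$-coefficients of $\zeta$ into the $\hat{\mathbb{P}}^h$-coefficients; (iii) verify that the resulting $dt$-integrand is $M_s/\varphi(s,\zeta_s;Y_s)$ times $\bigl(\partial_t\varphi + \hat L^h\varphi + \theta\hat g\,\varphi\bigr)(s,\zeta_s;Y_s)$ — equivalently $e^{\theta\int_0^s\hat g}\Psi^Z_s$ times that bracket — so that after taking $\bar{\mathbf{E}}$ the $dt$-part becomes $\int_0^t q_Z^h(s)\bigl(\partial_t\varphi + \hat L^h\varphi + \theta\hat g\,\varphi\bigr)\,ds$; (iv) argue the stochastic integral is a true $\bar{\mathbb{P}}$-martingale with zero expectation — here one invokes the boundedness of $\varphi\in C_b^{1,2}$, the assumed boundedness of the coefficients (the $C^{1,1,1}_b$/$C^{1,1}_b$ hypotheses), and the integrability guaranteed by the Kazamaki condition \eqref{def:control:class:AZ:Kaza2} which, by Remark \ref{R.5}, makes $\bar\chi^Z$ (hence $\Psi^Z$) well-behaved; a localization-plus-uniform-integrability argument then removes the stopping times.

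The main obstacle I anticipate is step (ii)–(iii): bookkeeping the drift terms so that the $\bar{\mathbb{P}}$-generator of $\zeta_t$ is correctly replaced by $\hat L^h$. The subtlety is that $M_t$ is a product of three factors each of which contributes both drift and martingale parts, and the Itô cross-variations $d\langle \Psi^Z, \varphi(\cdot,\zeta_\cdot;Y_\cdot)\rangle_t$ must be matched term-by-term against the difference between the $\hat{\mathbb{P}}^h$-drift of $\zeta$ in \eqref{eq:zeta:wh} and its $\bar{\mathbb{P}}$-drift in \eqref{eq:zeta:wbar}. Equivalently — and this is probably the cleaner route — one first changes measure from $\bar{\mathbb{P}}$ back to $\hat{\mathbb{P}}^h$ inside the expectation defining $q_Z^h(t)(\varphi_t)$, writing $q_Z^h(t)(\varphi_t) = \hat{\mathbf{E}}^h\!\bigl[e^{\theta\int_0^t\hat g}\varphi(t,\zeta_t;Y_t)\bigr]$ (the $\Psi^Z_t = (\bar\chi^Z_t)^{-1}$ exactly cancels the Radon–Nikodym density $\bar\chi^Z_t = d\bar{\mathbb{P}}/d\hat{\mathbb{P}}^h|_{\mathcal{F}^Y_t}$), so that the computation is carried out directly under $\hat{\mathbb{P}}^h$ where $\zeta_t$ has generator $\hat L^h$ by construction (equation \eqref{eq:zeta:wh}); then Itô applied to $e^{\theta\int_0^t\hat g}\varphi(t,\zeta_t;Y_t)$ under $\hat{\mathbb{P}}^h$ immediately yields the $dt$-integrand $\partial_t\varphi + \hat L^h\varphi + \theta\hat g\,\varphi$, and taking $\hat{\mathbf{E}}^h$ gives \eqref{eq:Zakai:PDE:Z:SPDE}. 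On this route the only remaining technical point is the vanishing of the $\hat{\mathbb{P}}^h$-martingale term, handled as in step (iv).
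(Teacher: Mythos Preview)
Your primary approach (steps (i)--(iv): apply It\^o under $\bar{\mathbb{P}}$ to the triple product $e^{\theta\int_0^t \hat g\,ds}\,\Psi^Z_t\,\varphi(t,\zeta_t;Y_t)$, with the cross-variation $d\langle\Psi^Z,\varphi\rangle_t$ converting the $\bar{\mathbb{P}}$-drift of $\zeta_t$ into the $\hat{\mathbb{P}}^h$-drift, then take $\bar{\mathbf{E}}$ and use Fubini) is exactly the paper's proof. Your alternative route---absorbing $\Psi^Z_t$ into a change of measure back to $\hat{\mathbb{P}}^h$ and applying It\^o there directly---is a valid and arguably cleaner shortcut, but it is not the path the paper takes.
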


\begin{proof}[Proof of Theorem \ref{theo:Zakai:Z}.]
See Appendix \ref{app:proof:Zakai:Z}.
\end{proof}

\begin{remark}
Note that although $q_Z^h(t)(\varphi_t)$ is defined under the measure $\bar{\mathbb{P}}$, the drift in \eqref{eq:Zakai:PDE:Z:SPDE} depends on  $\hat L^h$. How did we get there? The derivation starts with the $\bar{\mathbb{P}}$-operator of $\zeta_t$. This operator then acquires an extra term through the cross variation $d \langle \Psi^Z, \varphi \rangle_t$, yielding the $\hat{\mathbb{P}}^h$-operator $\hat L^h$.
\end{remark}

\begin{remark}
We can express the conditional probability $q_Z^h(t)(\varphi_t)$ in terms of its conditional density $q_Z^h(t,\zeta;y)$ as
\begin{align}
    q_Z^h(t)\left(\varphi(t)\right) = \int q_Z^h(t,\zeta;y) e^{ \theta \int_0^t g(s,y,\zeta,h;\theta)ds} \varphi(t,\zeta;y) d\zeta,
\end{align}
where the density $q_Z^h(t,\zeta;y)$ is with respect to the Lebesgue measure. By standard PDE argument \cite[see for instance Chapter 1 in][]{fr64}, we can show that the conditional density $q_Z^h(t)$ satisfies the parabolic PDE:
\begin{align}\label{eq:Zakai:PDE:Z:PDE}
    \frac{\partial q_Z^h(t)}{\partial t} - \hat L^{h*}q_Z^h(t) = 0
\end{align}
where the operator $\hat L^{h*}$ is the adjoint of the operator $\hat L^{h}$ and is thus given by
\begin{align}\label{eq:operator:Lh:Z:adjoint}
    \hat L^{h*}f(t,\zeta;y) 
    :=& \frac{1}{2} \sum_{i,j=1}^q \frac{\partial^2 }{\partial \zeta_i \partial \zeta_j} \left( \left\{H(t, \zeta;y)\Sigma^Y\left(t; y\right)\Sigma^{Y'}\left(t; y\right) H'(t, \zeta;y)\right\}_{ij} f(t,\zeta;y) \right)
    \nonumber\\
    -& \sum_{i=1}^q \frac{\partial }{\partial \zeta_i} \left( \left\{ \hat{G}(t, \zeta;y)- \theta H(t, \zeta;y)\Sigma^Y\left(t;y\right) \left(\Sigma^{(1)'}\left(t;y\right) h - \Xi'\left(t;y\right) \right) \right\}_i f(t,\zeta;y) \right)
\end{align}
This parabolic PDE is the forward Kolmogorov or Fokker-Planck equation for the filter parametrized by $\zeta_t$ under the measure $\hat{\mathbb{P}}^h$. The existence and uniqueness of a solution are tied to the properties of the drift and diffusion functions $G(t, Y_t, \zeta_t)$, $H(t, Y_t, \zeta_t)$, $\hat{a}^Y(t, Y_t, \zeta_t)$, and $\Sigma^Y\left(t,Y_t\right)$. When it exists, this solution is the filter process's density.
\end{remark}

\section{ Criteria for Separability}\label{sec:Separability}

In this section, we shall derive a criterion according to which one can conclude whether and to what extent a separation property holds for a given risk-sensitive control problem under partial observations, as we consider it in this paper concerning an investment problem. We recall that we consider a separation property to hold if the separated problem, namely the problem where instead of the unobserved factor process $X_t$ one considers its filter distribution, can be expressed in terms of the filter parameters or, equivalently, by one or more synthetic values of the filter distribution. 

For this purpose, recalling the definition at \eqref{eq:ahat} and making explicit $Z_t=(Y_t,\zeta_t)$, we have
\begin{align}\label{a:h}
\hat a(t,Y_t,\zeta_t)=\int a(t,x,Y_t)\,dp(x,\zeta_t)\>\>\>,\quad \hat c(t,Y_t,\zeta_t)=\int c(t,x,Y_t)\,dp(x,\zeta_t)\end{align}
we can state the following 

\begin{proposition}\label{WP1}
The separation property hinges upon how $x$ and $\zeta$ relate to each other via \eqref{a:h}\end{proposition}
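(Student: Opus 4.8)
The plan is to trace through the construction of the separated problem in Subsection \ref{S.4.2} and isolate every place at which the unobserved factor $x$ still intervenes. Once this is done, the assertion reduces to the observation that $x$ enters the separated problem \emph{only} through the conditional averages \eqref{a:h} (and their analogues for the non-controlled components), so that the structure of the separated problem --- and in particular whether it collapses to a classical ``plug-in-the-estimate'' problem --- is governed entirely by the relation between $a(t,\cdot,y)$, $c(t,\cdot,y)$ and the filter law $p(\cdot;\zeta)$.

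First I would enumerate the data of the separated problem: (a) the running cost $\hat g(t,z,h;\theta)$ of \eqref{eq:gh}, which depends on $x$ only through $\hat a^{(1)}(t,Z_t)$ and $\hat c(t,Z_t)$; (b) the controlled dynamics \eqref{eq:zeta:wh}--\eqref{eq:obs:wh} of $Z_t=(Y_t,\zeta_t)$, whose drift involves $\hat G = G + H\hat a^Y$ and $\check a^Y$ and hence depends on $x$ only through the vector $\hat a^Y(t,Z_t)$, i.e. through $\hat b^f,\hat a,\hat c,\hat a^E$; and (c) the value $\bar I(h;T,\theta,r_0)=r_0^{-\theta}\,q_Z^h(T)(1)$ from Definition \ref{def:qZ}, a functional of (a) and (b) alone. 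By \eqref{eq:fhat} each ``hat'' quantity in (a)--(c) is of the form $\int f(t,x,Y)\,dp(x;\zeta)$, and among them only $\hat a$ and $\hat c$ of \eqref{a:h} carry the control $h$ non-trivially. Thus the separated problem is a complete-observation risk-sensitive problem on the finite-dimensional state $(Y_t,\zeta_t)$ whose coefficients are determined entirely by the pairing of $(a,c)$ with $p(\cdot;\zeta)$.

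I would then read off the resulting hierarchy of separation notions. A \emph{strict} separation --- the separated problem coinciding with the original problem of Subsection \ref{S.4.1} after substituting for $X_t$ a deterministic, $\cF^Y_t$-computable statistic $\Phi(\zeta_t)$ --- holds precisely when $\hat a(t,Y_t,\zeta_t)=a(t,Y_t,\Phi(\zeta_t))$ and $\hat c(t,Y_t,\zeta_t)=c(t,Y_t,\Phi(\zeta_t))$; the canonical instance is $a,c$ affine in $x$, for which $\Phi(\zeta)=\int x\,dp(x;\zeta)$ (the conditional mean) works and one recovers the linear-Gaussian/Kalman situation of \citet{na00} and \citet{nape02}. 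When $a$ or $c$ is not affine, \eqref{a:h} no longer factors through a single point estimate, but separation still holds in the wider sense of the Introduction because $\zeta_t$ is finite-dimensional by Assumption \ref{as:FDfilter}; the dimension and interpretation of the reduced state variable is then exactly the minimal set of functionals of $p(\cdot;\zeta)$ needed to evaluate $\hat a,\hat c$ and to close the $\zeta$-dynamics --- which is what ``how $x$ and $\zeta$ relate via \eqref{a:h}'' is meant to capture.

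The main obstacle is definitional rather than analytic: to upgrade the heuristic to a theorem one must first pin down which of these notions of ``separation property'' is intended (classical plug-in separation, reduction to a lower-dimensional sufficient statistic, or mere finite-dimensionality of $\zeta$). Once that choice is fixed, the argument amounts to the bookkeeping of where $x$ occurs carried out above, combined with the elementary fact that $\int a(t,x,y)\,dp(x;\zeta)=a\!\left(t,\int x\,dp(x;\zeta),y\right)$ for all admissible $\zeta$ if and only if $a(t,\cdot,y)$ is affine on the relevant supports; no estimate beyond those already needed for the well-posedness of \eqref{eq:qZ} enters.
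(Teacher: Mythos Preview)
Your proposal is correct and follows essentially the same route as the paper: both arguments amount to bookkeeping showing that the separated criterion $\bar I(h;T,\theta,r_0)$ depends on the unobserved factor $x$ only through the conditional averages $\hat a^Y$ and $\hat c$ of \eqref{a:h}. The paper does this by unpacking the two ingredients $\hat g$ and $\Psi^Z_T$ of $\bar I$ directly (see \eqref{W1}--\eqref{W3}), whereas you route the same observation through the $q_Z^h$ representation and the $Z$-dynamics; your additional discussion of the hierarchy of separation notions anticipates what the paper places immediately after the proposition in Subsection \ref{S.6.1}.
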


\begin{proof} Recall first that the objective criterion in the exponentially transformed form is, for the separated problem, given at  \eqref{eq:Ibar}, namely
\begin{align}
\bar I(h;T,\theta,r_0)
= r_0^{-\theta} \bar{\mathbf{E}} \left[\exp \left\{ \theta \int_0^T \hat{g}(t, Z_t, h_t;\theta) dt\right\} \Psi^Z_T\right]
\end{align}
where, see \eqref{eq:gh},
\begin{align}\label{W1}
\hat g(t,z,h;\theta) 
&= \frac{\theta+1}{2} h'\Sigma^{(1)}\Sigma^{(1)'} \left(t, y\right) h 
- h' \hat{a}^{(1)}(t,z)
- \theta h'\Sigma^{(1)}\Xi'\left(t,y\right)
+ \hat{c}(t,z)
                                                \nonumber\\
&+ \frac{\theta -1}{2}\Xi\Xi'\left(t,y\right)
\end{align}
and, see \eqref{def:PbarZ},
\begin{align}\label{W2}
\Psi_t^Z 
=& \left(\bar{\chi}^Z_t\right)^{-1} = \exp \left\{ 
        \int_0^t \check{a}^{Y}(s, Z_s; h_s)' \left( \Sigma^Y\Sigma^{Y'}(s,Y_s)\right)^{-1}\Sigma^Y\left(s, Y_s\right) d\tilde{W}^h_s
\right. \nonumber\\
& \left.
       + \frac{1}{2}\int_0^t \check{a}^{Y}(s, Z_s; h_s)' \left( \Sigma^Y \Sigma^{Y'}\left(s, Y_s\right)\right)^{-1}\check{a}^Y(s,Z_s; h_s) ds \right\}
\end{align}
The function  $\check{a}^Y(t,Z_t;h_t)$ in \eqref{W2} is, see \eqref{eq:a_check}, given at
\begin{align}\label{W3} 
\check{a}^Y(t, Z_t; h_t) 
= \hat{a}^Y(t, Z_t) 
    - \theta \Sigma^Y\left(t, Y_t\right)\left(\Sigma^{(1)'}\left(t, Y_t\right) h_t - \Xi'\left(t, Y_t\right) \right)
\end{align}
It follows that the two ingredients of the objective criterion in the separated problem depend on $Z_t$ only via $\hat a^Y(t,Z_t)$ and $\hat c^Y(t,Z_t)$.
The conclusion then follows.
\end{proof}

\subsection{Separation Properties}\label{S.6.1}

We examine the implications of Proposition \ref{WP1} in four prominent cases. In the following, we discuss only properties relating to $a(t,x,Y_t)$ since they apply analogously to $c(t,x,Y_t)$.

\begin{enumerate}
\item In the \emph{linear} case with $a(t,x,y)=a(t,y)+A(t,y)\,x$ one has
\begin{align}
\hat a(t,\zeta_t,Y_t)=a(Y_t)+A(Y_t)\,m_t
\quad\mbox{with}\quad m_t = E\left[X_t\mid \mathcal{F}^Y_t\right],
\end{align}
where we made explicit $Z_t=(Y_t,\zeta_t)$. Thus, it follows from Proposition \ref{WP1} that the linear case $a(t,x,y)=a(t,y)+A(t,y)\,x$ is essentially the only case where a \emph{strict} separation property holds for our Gaussian model \eqref{eq:dS}, \eqref{eq:dL}. This finding is in line with the literature, in particular, \citet{nape02} who obtained an explicit analytic solution to the Modified Zakai Equation in the linear-Gaussian case with the use of the Kalman filter (KF). It also aligns with \citet{dall_BLcontinuous,dall_BBL_2020,davisRisksensitiveBenchmarkedAsset2021}.

\item  In the \emph{quadratic} case, $a(t,x,y)=a(t,y)\,x^2+b(t,x)\,x+c(t,y)$. Denoting the filter mean and co-variance by $m_t$ and $P_t$ respectively, we have 
\begin{align}
\hat a(t,\zeta_t,Y_t) = a(t,Y_t)\,[m_t^2+P_t]+b(t,Y_t)\,m_t+c(t,Y_t).
\end{align}
As is natural for the quadratic case, we have the additional additive term $a(t,Y_t)\,P_t$. From Proposition 6.1, we see that we cannot achieve strict separation by replacing $X_t$ by $m_t$ in the dynamics of the wealth process $V_t$ and excess return process $R_t$. However, we can express the separated problem with two synthetic values of the filter distribution, namely mean and covariance. Thus, the separation property holds in a \emph{wider sense}.

\item By extension, the Extended Kalman filter (EKF) performs a \emph{quadratic expansion} of a nonlinear $a(t,x,y)$ around their most recent estimate mean $m_t := m_t := \mathbf{E} \left[ X_t \mid \mathcal{F}^Y_t \right]$, i.e.
\begin{align} a(t,X_t, Y_t)=a(t,m_t,Y_t)+a_x(t,m_t,Y_t)'\,(X_t-m_t)+\frac{1}{2}\,(X_t-m_t)'\,a_{xx}(t,m_t,Y_t)\,(X_t-m_t)\end{align}
where $a_x$ and $a_{xx}$ denote partial derivatives, we have
\begin{align}
    \hat a(t,\zeta_t, Y_t)=a(t,m_t,Y_t)+\frac{1}{2}\, \tr(a_{xx}(t,m_t,Y_t)\, P_t).
\end{align}

\item Finally, the case $a(t,x,y)=\exp[\eta(t,y)\,x]$ subsumes a variety of nonlinear cases. Here, $\hat a((t,\zeta_t,Y_t)$ is the \emph{moment generating function} of the filter distribution $p(x,\zeta_t)$, provided it exists. In the case when one can apply the KF or, more generally, the EKF, the filter distribution is Gaussian and thus determined by the first two moments. One obtains
\begin{equation}\label{W4}
\hat a(t,\zeta_t,Y_t) = \exp\left[\eta(t,Y_t)\,m_t\right]\,\exp\left[\frac{1}{2}\eta^2(t,Y_t)\,P_t\right].
\end{equation}
In the simpler KF case, $P_t$ can be precomputed. So, the second term in \eqref{W4} is simply a multiplicative factor. In the EKF case, $P_t$ is adapted to the observation filtration, inducing a more significant difference than in the KF. Thus, in both cases, we can express the separated problem with two synthetic values of the filter distribution and achieve separation in a \emph{wider sense}.

\end{enumerate}

\section{Implementation Examples}\label{sec:examples}

We discuss some examples on how to use the results from Sections \ref{sec:Zakai} and \ref{sec:Separability}. We consider a general nonlinear Gaussian model as in Section \ref{sec:setting} where, according to Assumption \ref{as:FDfilter}, we assume that a finite-dimensional filter exists or can be derived via an approximation. To be concrete, we assume that such an approximation is obtained via the Extended Kalman Filter (EKF). The possibility then of obtaining an explicit analytic solution of the separated control problem, including an explicit expression for the candidate optimal control strategy, hinges upon the possibility of obtaining an, exact or approximate, analytic solution of a resulting HJB equation. As expected, this is certainly possible in the linear-Gaussian case for which an explicit solution is e.g. derived in Section 3.2 of \citet{davisRisksensitiveBenchmarkedAsset2021}. In this latter reference, the authors simply postulate a separation of estimation and control by stating that, since the investors cannot observe the value of certain factors $X(t)$, they use a modified risk-sensitive benchmarked criterion based on the Kalman filter estimate $\hat X(t)$. With our results, we know now that such a separation holds indeed in the linear-Gaussian case, and we are also able to draw some further conclusions, as mentioned below in this section.

\subsection{Approximate Solution via an Extended Kalman Filter}

Recall the partial observation model constructed in Section \ref{sec:setting}:
\begin{align}\label{eq:POmodel}
\begin{cases}
dX_t &= b\left(t,X_t, F_t\right)dt + \Lambda\left(t,X_t, F_t\right) dW_t,   \qquad X_0 \sim N\left( \mu_0, P_0\right)                                      \\
dY_t &= a^Y\left(t,X_t, F_t\right)dt + \Sigma^Y\left(t,F_t\right) dW_t,   \qquad Y_0 = y_0,
\end{cases}
\end{align}
where the distribution of the initial state value $X_0$ is Gaussian with mean $\mu$ and covariance $P_0$.

To set up the extended Kalman filter, we linearize the nonlinear functions of $X_t$ around their most recent estimates, i.e $m_t := \mathbf{E} \left[ X_t \mid \mathcal{F}^Y_t \right]$, namely
\begin{align}
    b\left(t,X_t, F_t\right) 
    &\approx b\left(t,m_t,F_t\right) + \frac{\partial b}{\partial X}\Big{|}_{\left(t,m_t,F_t\right)} 
    \left( X_t - m_t \right)
                                        \nonumber\\
    &= \left[ b(t,m_t,F_t) - \frac{\partial b}{\partial X}\Big{|}_{\left(t,m_t,F_t\right)} m_t\right] + \frac{\partial b}{\partial X}\Big{|}_{\left(t,m_t,F_t\right)} X_t
                                        \nonumber\\
    &=: \bar{b}\left(t,m_t,F_t\right) + B\left(t,m_t,F_t\right) X_t,
\end{align}
where we defined implicitly $\bar{b}(t,m_t,F_t) := b(t,m_t,F_t) - \frac{\partial b}{\partial X}\Big{|}_{\left(t,m_t,F_t\right)} m_t$ and $B(t,m_t,F_t) := \frac{\partial b}{\partial X}\Big{|}_{\left(t,m_t,F_t\right)}$. 
Similarly, 
\begin{align}
    a^Y\left(t,X_t, F_t\right) 
    &\approx a^Y\left(t,m_t,F_t\right) + \frac{\partial a^Y}{\partial X}\Big{|}_{\left(t,m_t,F_t\right)} 
    \left( X_t - m_t \right)
                                        \nonumber\\
    &= \left[ a^Y(t,m_t,F_t) - \frac{\partial a^Y}{\partial X}\Big{|}_{\left(t,m_t,F_t\right)} m_t\right] + \frac{\partial a^Y}{\partial X}\Big{|}_{\left(t,m_t,F_t\right)} X_t
                                        \nonumber\\
    &=: \bar{a}^Y\left(t,m_t,F_t\right) + A^Y\left(t,m_t,F_t\right) X_t,
\end{align}
where we defined implicitly $\bar{a}^Y \left(t,m_t,F_t \right) := a^Y(t,m_t,F_t) - \frac{\partial a^Y}{\partial X}\Big{|}_{\left(t,m_t,F_t\right)} m_t$ and $A^Y \left(t,m_t,F_t \right) := \frac{\partial a^Y}{\partial X}\Big{|}_{\left(t,m_t,F_t\right)}$. Taking also $\Lambda\left(t, X_t,F_t\right) \approx \Lambda\left(t, m_t,F_t\right)$, we have the following filter model:
\begin{align}\label{eq:POmodel:EKF}
\begin{cases}
dX_t &= \left[\bar{b}\left(t,m_t,F_t\right) + B\left(t,m_t,F_t\right) X_t \right]dt + \Lambda\left(t,m_t, F_t\right) dW_t,   \qquad X_0 \sim N\left( \mu_0, P_0\right)                                      \\
dY_t &= \left[\bar{a}^Y\left(t,m_t,F_t\right) + A^Y\left(t,m_t,F_t\right) X_t \right]dt + \Sigma^Y\left(t,F_t\right) dW_t,   \qquad Y_0 = y_0.
\end{cases}
\end{align}

The approximate filter distribution is Gaussian, with mean vector $m_t \in \mathbb{R}^{n}$ and covariance matrix $\Pi_t \in \mathbb{R}^{n \times n}$, so $\zeta_t = (m_t, \Pi_t)$. The filter mean
\begin{align}\label{eq:condmean:EKF}
    m_t := \mathbf{E} \left[ X_t \mid \mathcal{F}^Y_t \right]
\end{align}
satisfies the SDE
\begin{align}\label{eq:state_estimate:Zakai:sol:EKF}
    dm_t 
    =&
    \left[\bar{b}\left(t,m_t,F_t\right)+  B\left(t,m_t,F\right) m_t \right] dt 
                                            \nonumber\\
    & + \left[ \Pi_t A^{Y'}\left(t,m_t,F_t\right) + \Lambda\left(t,m_t,F_t\right)\Sigma^{Y'}\left(t,F_t\right) \right] \left(\Sigma^Y\Sigma^{Y'}\left(t,F_t\right)\right)^{-1} 
                                            \nonumber\\
    & \times \left[dY_t - \left(\bar{a}^Y\left(t,m_t,F_t\right) - A^Y\left(t,m_t,F_t\right) m_t \right) dt \right]
                                            \nonumber\\
    =& \left[\bar{b}\left(t,m_t,F_t\right)+  B\left(t,m_t,F\right) m_t \right] dt 
    + \hat{\Lambda}\left(t,m_t,\Pi_t,F_t\right) dU_t
                                                        \nonumber\\
    m_0 =& \mu_0,
\end{align}
where 
\begin{align*}
    \hat{\Lambda}\left(t,m_t,\Pi_t,F_t\right) := \left[\Pi_t A^{Y'}\left(t,m_t,F_t\right) + \Lambda\left(t,m_t,F_t\right)\Sigma^{Y'}\left(t,F_t\right) \right]\left(\Sigma^Y\Sigma^{Y'} (t, F_t)\right)^{-\frac{1}{2}},
\end{align*}
the process $U_t$, defined via
\begin{align}\label{eq:Kalmaninno:EKF}
  dU_t = \left(\Sigma^Y\Sigma^{Y'} \left(t,F_t\right)\right)^{-\frac{1}{2}} \left[dY_t -  \left(\bar{a}^Y\left(t,m_t,F_t\right) - A^Y\left(t,m_t,F_t\right) m_t \right) dt \right],  
\end{align}
is a $\left(\mathbb{P}, \mathcal{F}^Y_t\right)$-Brownian motion corresponding to the \emph{Kalman innovation}.

\begin{remark}
The implicit definition of the Kalman innovation $U_t$ at \eqref{eq:Kalmaninno:EKF} performs two actions simultaneously. It defines a new $\left(\mathbb{P}, \mathcal{F}^Y_t\right)$-Brownian motion $U_t$ in terms of the $\left(\mathbb{P}, \mathcal{F}_t\right)$-Brownian motion $W_t$, and it also ensures that the new Brownian motion has the same dimension as the observation process $Y_t$. Therefore, this implicit definition is tantamount to applying Lemma \ref{lem:Wiener:restricted} and performing the matrix transformation at \eqref{eq:rk:innovation:Kalmaninno} in Remark \ref{rk:innovation}.
\end{remark}

The filter covariance
\begin{align}\label{eq:condcovar:EKF}
    \Pi_t := \mathbf{E} \left[ \left(X_t - m_t \right) \left(X_t - m_t \right)' \mid \mathcal{F}^Y_t \right]
\end{align}
satisfies the Riccati equation
\begin{align}
\dot{\Pi}_t
    &=
        \Lambda\Lambda'\left(t,m_t,F_t\right)
        + B\left(t,m_t,F_t\right) \Pi_t
        + \Pi_t B'\left(t,m_t,F_t\right)
        - \hat{\Lambda}\left(t,m_t,\Pi_t,F_t\right)
          \hat{\Lambda}'\left(t,m_t,\Pi_t,F_t\right),
                                                    \nonumber\\
    \Pi_0 &= P_0.
\label{eq:Pi:Zakai:sol:EKF} 
\end{align}

We introduced the linear approximation solely for the purpose of deriving an approximate filter. The rest of the model remains nonlinear. Consequently, $\hat{a}^Y$ at \eqref{eq:ahat} becomes
\begin{align}\label{eq:aYhat:EKF} 
    \hat{a}^Y(t,Z)
    = \hat{a}^Y(t,m_t,\Pi_t, F)
    := \int  a^Y(t,X,F) d p(X;m_t,\Pi_t)
\end{align}
which, when decomposed, provides an expression for the securities' drift coefficient $\hat{a}$ and the benchmark's drift coefficient $\hat{c}$.

We mirror the development at \eqref{eq:Rtilde}, tayloring it to the situation at hand. Applying Lemma \ref{lem:Wiener:restricted}, and using the dynamics of $Y_t$ at \eqref{eq:POmodel}, the definition of $\hat{a}$ at \eqref{eq:aYhat:EKF}, and the block matrix decomposition in Remark \ref{rk:Wtilde:blocks}, we express $R_t$ in the filtration $\mathcal{F}^Y_t$, in terms of the filter mean $m_t$ and Wiener process $\tilde{W}_t$. Equation \eqref{eq:Rtilde} for the log return of the portfolio over its benchmark, $R_t = \ln \frac{V_t}{L_t}$, becomes:
\begin{align}\label{eq:R:EKF}
dR_t &= \left[ 
  \left(- \frac{1}{2} h_t'\Sigma^{(1)}\Sigma^{(1)'} \left(t, F_t\right) h_t + h_t'\hat{a}^{(1)}(t,m_t,\Pi_t,F_t) + \frac{1}{2}\Xi\Xi'\left(t, F_t\right) - \hat{c}(t,m_t,\Pi_t,F_t) \right) \right] dt
                                    \nonumber\\
    & + \left(h_t'\Sigma^{(1)}\left(t, F_t\right) - \Xi\left(t, F_t\right) \right) d\tilde{W}_t,     
\end{align}
with $R_0 = \ln \frac{v}{l} =: r_0$. 

For completeness, we also express $R_t$ in terms of the Kalman innovation $U_t$. Equation \eqref{eq:rk:innovation:Kalmaninno} implies that
\begin{align}\label{eq:innovation:Kalmaninno:EKF}
    \Sigma^{Y}\left(t, F_t\right) d\tilde{W}_t 
    = \left( \Sigma^{Y}\Sigma^{Y'}\left(t, F_t\right)\right)^{\frac{1}{2}} dU_t.   
\end{align}
We use the block matrix definition of $\Sigma^Y$ to decompose the left-hand side of this expression as $$\Sigma^{Y}\left(t, F_t\right) d\tilde{W}_t = 
\begin{pmatrix} 
    \Lambda^f\left(t,F_t\right)', &
    \Sigma\left(t,F_t\right)', &  
    \Xi\left(t,F_t\right)', & 
    \Sigma^{E}\left(t,F_t\right)' 
\end{pmatrix}'
d\tilde{W}_t.$$ 
On the right-hand side of \eqref{eq:innovation:Kalmaninno:EKF} we decompose the $m^Y \times m^Y$ matrix $\left(\Sigma^Y\Sigma^{Y'}\left(t, F_t\right)\right)^{1/2}$ as 
$$\left(\Sigma^Y\Sigma^{Y'}\left(t, F_t\right)\right)^{1/2} := 
\begin{pmatrix} 
    \hat{\Lambda}^f\left(t, F_t\right)', &
    \hat{\Sigma}\left(t, F_t\right)', & 
    \hat{\Xi}\left(t, F_t\right)' &  
    \hat{\Psi}_Z\left(t, F_t\right)'
\end{pmatrix}',
$$ 
where $\hat{\Lambda}^f(\cdot)$, $\hat{\Sigma}(\cdot)$, $\hat{\Xi}(\cdot)$, and $\hat{\Psi}_Z(\cdot)$ are respectively a $\ell \times m^Y$ matrix, a $m \times m^Y$ matrix, a $m^Y$-element column vector, and a $k \times m^Y$ matrix such that $\hat{\Lambda}^f\hat{\Lambda}^{f'}(\cdot) = \Lambda^f\Lambda^{f'}(\cdot)$, $\hat{\Sigma}\hat{\Sigma}'(\cdot) = \Sigma\Sigma'(\cdot)$, $\hat{\Xi}\hat{\Xi}'(\cdot) = \Xi\Xi'(\cdot)$, and $\hat{\Psi}_Z\hat{\Psi}_Z'(\cdot) = \Psi_Z\Psi_Z'(\cdot)$.
Using these decompositions, we restate \eqref{eq:innovation:Kalmaninno:EKF} as
\begin{align}
&
\begin{pmatrix} 
    \Lambda^f\left(t,F_t\right)', &
    \Sigma\left(t,F_t\right)', &  
    \Xi\left(t,F_t\right)', & 
    \Sigma^{E}\left(t,F_t\right)'
\end{pmatrix}'d\tilde{W}_t
                                    \nonumber\\
=&
\begin{pmatrix}     
    \hat{\Lambda}^f\left(t, F_t\right)' ,&
    \hat{\Sigma}\left(t, F_t\right)', & 
    \hat{\Xi}\left(t, F_t\right)', &  
    \hat{\Psi}_Z\left(t, F_t\right)'
\end{pmatrix}'dU_t, 
\end{align}
which implies in particular that $\Sigma\left(t,F_t\right) d\tilde{W}_t = \hat{\Sigma}\left(t, F_t\right) dU_t$ and $\Xi\left(t,F_t\right) d\tilde{W}_t = \hat{\Xi}\left(t, F_t\right) dU_t$. We use these two equations and a further decomposition of $\Sigma$ and $\hat{\Sigma}$ distinguishing between tradable and nontradable assets to rewrite \eqref{eq:R:EKF} as 
\begin{align}\label{eq:R:EKF:U}
dR_t &= \left[ 
  \left(- \frac{1}{2} h_t'\Sigma^{(1)}\Sigma^{(1)'} \left(t, F_t\right) h_t + h_t'\hat{a}^{(1)}(t,m_t,\Pi_t,Y_t) + \frac{1}{2}\Xi\Xi'\left(t, F_t\right) - \hat{c}(t,m_t,\Pi_t,F_t) \right) \right] dt
                                    \nonumber\\
    & + \left(h_t'\hat{\Sigma}^{(1)}\left(t, F_t\right) - \hat{\Xi}\left(t, F_t\right) \right) dU_t.   
\end{align}

Then, the risk-sensitive benchmarked criterion at \eqref{eq:Jhat} becomes
\begin{align}\label{eq:Jhat:EKF}
\hat J(h;T,\theta,r_0) 
&:= -\frac{1}{\theta} \ln \mathbf{E} \left[ r_0^{-\theta} e^{-\theta R_T}\right]
= -\frac{1}{\theta} \ln \mathbf{E} \left[    
r_0^{-\theta} \exp \left\{ \theta \int_{0}^{T} \hat g(t,m_t, \Pi_t, F_t, h_t;\theta) dt \right\} \hat{\chi}_T^h
\right],
\end{align}
where 
\begin{align}\label{eq:g:EKF}
\hat{g}(t,m,\Pi,f,h;\theta)
:=&
\frac{1}{2} \left(\theta+1 \right)h'\Sigma^{(1)}\Sigma^{(1)'}(t, f) h 
- h' \hat{a}^{(1)} \left(t, m, \Pi, f\right)
- \theta h'\Sigma^{(1)}\Xi'(t, f)
                                                \nonumber\\
& 
+ \hat{c} \left(t, m,\Pi,f\right)
+ \frac{1}{2} \left(\theta -1 \right)\Xi\Xi'(t,f),
\end{align}
and $\hat{\chi}_T^h$ is defined via \eqref{eq:expmarthat}. 

\begin{remark}
As expected, 
$\hat{g}(t,m_t,\Pi_t,f,h;\theta)
=
\int g(t,X,F,h;\theta) dp(X;m_t, \Pi_t)$.
\end{remark}

Theorem \ref{tequiv} guarantees that, for a given admissible control policy $h_t \in \mathcal{A}_Z(T)$, the separated criterion $\hat J(h;T,\theta,r_0)$ at \eqref{eq:Jhat:EKF} yields the same value as the original criterion. Thus, we have reformulated the control problem with a possible approximation in the derivation of the finite-dimensional filter, using only the properties of the finite-dimensional filter, without needing to derive and solve the Zakai equation.

\subsection{Linear-Gaussian Model}

The linear-Gaussian model is a corollary to the EKF approximation described above. Here, the drift functions $b(t,X_t,F_t)$, $b^f(t,X_t,F_t)$, $a(t,X_t,F_t)$, $c(t,X_t,F_t)$, $a^E(t,X_t,F_t)$ are all affine in the factor processes $X_t$ and $F_t$, and the diffusion coefficients $\Lambda$, $\Sigma$, $\Xi$, and $\Sigma^E$ are deterministic functions of time.  Specifically, $b(t,X_t,F_t) = b_t + B_t \begin{pmatrix} X_t \\ F_t \end{pmatrix}$, where the time-dependent coefficients $b : [0,T] \to \mathbb{R}^n, B: [0,T] \to \mathbb{R}^{m\times (n+\ell)}$, are $C^1$, with similar assumptions for $b^f(t,X_t,F_t)$, $a(t,X_t,F_t)$, $c(t,X_t,F_t)$, and $a^E(t,X_t,F_t)$. Additionally, $X_0 \sim N(\mu_0, P_0)$ for some known vector $\mu_0 \in \mathbb{R}^n$ and matrix $P_0 \in \mathbb{R}^{n \times n}$. The dynamics of the observation process $Y_t = (F_t', \ln S_t', \ln L_t', E_t')'$ is:
\begin{align}\label{eq:obs:LG}
dY_t &= \left( a^Y_t + A^Y_t \begin{pmatrix} X_t \\ F_t \end{pmatrix} \right)dt + \Sigma^Y_t dW_t, \; Y_0 = y_0,
\end{align}
Hence, the filter model \eqref{eq:POmodel:EKF} holds exactly.

Then,
\begin{align}\label{eq:ahat:LG} 
    \hat{a}^Y(t,Z)
    = \int  a^Y(t,X,Y) d p(X;m_t,\Pi_t)
    = a^Y_t + A^Y_t \begin{pmatrix}
                m_t     \\
                F_t
        \end{pmatrix}.
\end{align}

The risk-sensitive benchmarked criterion at \eqref{eq:Jhat} becomes
\begin{align}\label{eq:Jhat:LG}
\hat J(h;T,\theta,r_0) 
&:= -\frac{1}{\theta} \ln \mathbf{E} \left[ r_0^{-\theta} e^{-\theta R_T}\right]
= -\frac{1}{\theta} \ln \mathbf{E} \left[    
r_0^{-\theta} \exp \left\{ \theta \int_{0}^{T} \hat g(t,m_t, F_t, h_t;\theta) dt \right\} \hat{\chi}_T^h
\right],
\end{align}
where 
\begin{align}\label{eq:g:LG}
\hat{g}(t,m,f,h;\theta)
&:= \frac{\theta+1}{2} h'\Sigma_t^{(1)}\Sigma_t^{(1)'} h 
- h' \left[ a^{(1)}_t + A^{(1)}_t \begin{pmatrix}
                m     \\
                F
        \end{pmatrix} \right]
- \theta h'\Sigma_t^{(1)}\Xi_t'
+ \left[ c_t + C_t \begin{pmatrix}
                m     \\
                F
        \end{pmatrix} \right]
                                        \nonumber\\
&+ \frac{\theta -1}{2}\Xi_t\Xi_t'
\end{align}
and $\hat{\chi}_T^h$ is defined via \eqref{eq:expmarthat}, as before.

Theorem \ref{tequiv} guarantees that the separated criterion $\hat J(h;T,\theta,r_0)$ at \eqref{eq:Jhat:LG} is identical to the original criterion for the linear-Gaussian control problem. Therefore, we have already achieved the same result as Theorem 2.1 in \citet{na00}, without deriving and solving a Zakai equation. All that is left is to solve the control problem as in \citet{davisRisksensitiveBenchmarkedAsset2021}.

\section{Conclusion}
This paper investigated stochastic control problems, in particular of the risk-sensitive type, under incomplete observation. We were mainly inspired by Nagai and Peng \citep{na00,nape02} whose approach, based on a so-called Modified Zakai Equation (MZE), solves the incomplete observation stochastic control problem in the linear-Gaussian case. We considered a general nonlinear model inspired by the risk-sensitive benchmarked asset management model (RSBAM) in  \citet{davisRisksensitiveBenchmarkedAsset2021} for which we assumed the existence of a finite-dimensional filter described by a vector-parameter process $\zeta_t$. The original problem with the unobserved state process $X_t$ was then transformed into the so-called separated problem with the state variable process given by $\zeta_t$.\smallskip

Our {\bf contribution} can be seen as follows:
\begin{enumerate}
    \item[a)] Theorem \ref{tequiv} proves the equivalence of the given problem in its original and separated versions, a fact that is generally overlooked in the literature;
    
    \item[b)] Proposition \ref{WP1} provides a criterion allowing us to identify situations where a separation property holds, perhaps in a wider sense, namely when the separated problem can be expressed in terms of just a finite number of synthetic values of the filter distribution such as the mean and some higher order moments. The state variable process in the separated problem is then finite-dimensional. Therefore, the solution to the separated problem can be obtained by standard methods of completely observed stochastic control problems;
    
    \item[c)] Our derivation of the MZE, given in Theorem \ref{theo:Zakai:Z}, aligns with Nagai and Peng, but with one crucial difference. While Nagai and Peng let the filter intervene only at the level of the solution of the MZE, we take the filter into account from the very beginning thanks to Lemma \ref{lem:Wiener:restricted}. Consequently, the MZE in Nagai and Peng is a stochastic PDE, which is difficult to solve in general. In fact, they obtain an explicit solution only in the linear-Gaussian case thereby implicitly establishing a separation property for this special case. However, in our situation, the MZE reduces to a deterministic PDE. Therefore, when solving the partially observed stochastic control problem by solving this deterministic PDE, one ends up with the same degree of difficulty as when solving an HJB equation for a complete observation stochastic control problem. Furthermore, our approach works for general incomplete observation stochastic control problems, provided there exists a finite-dimensional filter. 
\end{enumerate}

\appendix
\renewcommand{\thesection}{\Alph{section}}

\section{Relation to Existing Risk-Sensitive Investment Management Problems}\label{app:correspondence}

Our approach applies directly to various existing models such as those by \citet{na00}, \citet{nape02}, and \citet{dall_BLcontinuous, dallBBL_JPM_2016, dall_BBL_2020, davisRisksensitiveBenchmarkedAsset2021}. Table \ref{table:parms:mapping} details the parametrization of these models in our general setup. Therefore, the approach describes in our paper establishes the separability of these models, and confirms that the asset allocation derived in \citet{dall_BLcontinuous, dallBBL_JPM_2016, dall_BBL_2020, davisRisksensitiveBenchmarkedAsset2021} is optimal.

\begin{table}[h!]
\begin{center}
\resizebox{1.00\textwidth}{!}{%
\begin{tabular}{| l | c | c | c | c |}
\hline
    & In this paper 
    & \citet{na00, nape02}    
    &   \citet{dall_BLcontinuous,dallBBL_JPM_2016,dall_BBL_2020} 
    &   \citet{davisRisksensitiveBenchmarkedAsset2021}  \\
    &&&&
    \\
\hline\hline
    Type of criterion 
    & Benchmark outperformance
    & Wealth-based  
    & Wealth-based  
    & Benchmark outperformance \\
\hline
    Unobservable factor  $X_t$
    & $b(t, X_t, F_t)$  
    & $b + B X_t$
    & $b + B X_t$
    & $b_t + B_t X_t$   \\
    & $\Lambda(t, X_t, F_t)$ 
    & $\Lambda$
    & $\Lambda$
    & $\Lambda_t$ \\
\hline
    Observable factor  $F_t$
    & $b^f(t, X_t, F_t)$, 
    & -
    & -
    & - \\
    & $\Lambda^f(t, F_t)$
    & -
    & -
    & - \\
\hline
    Financial assets  $S_t$
    & $a(t, X_t, F_t)$  
    & $a + A X_t$
    & $a + A X_t$
    & $a_t + A_t X_t$   \\
    & $\Sigma(t, F_t)$ 
    & $\Sigma$
    & $\Sigma$
    & $\Sigma_t$ \\
\hline
    Benchmark $L_t$
    & $c(t, X_t, F_t)$  
    & -
    & -
    & $c_t + C_t X_t$   \\
    & $\Xi(t, F_t)$ 
    & -
    & -
    & $\Xi_t$ \\
\hline
    Expert forecasts $L_t$
    & $a^E(t, X_t, F_t)$  
    & -
    & $a^E_t + A^E_t X_t$
    & $a^E_t + A^E_t X_t$   \\
    & $\Sigma^E(t, F_t)$ 
    & -
    & $\Sigma^E_t$
    & $\Sigma^E_t$ \\
\hline
    Sources of observations 
    & Financial assets
    & Financial assets   
    & Financial assets 
    & Financial assets    \\
    & Benchmark
    &    
    & Expert forecasts 
    & Benchmark    \\
    & Expert forecasts  
    &           
    &   
    & Expert forecasts    \\
    & Observable factors  
    &           
    &   
    & \\
    &&&&
    \\   
    Observation process $Y_t$
    & $a^Y(t, X_t, F_t)
    = \begin{pmatrix} 
        b^f(t,X_t,F_t) \\
        a(t,X_t,F_t) - \frac{1}{2}d_{\Sigma}(t,F_t) \\
        c_t - \frac{1}{2} \Xi\Xi_t'  \\ 
        a^E_t 
    \end{pmatrix}$
    & $a - \frac{1}{2}d_{\Sigma} + A X_t$ 
    & $\begin{pmatrix} a - \frac{1}{2}d_{\Sigma}\\ a^E_t \end{pmatrix} + \begin{pmatrix} A \\ A^E_t \end{pmatrix} X_t$
    & $\begin{pmatrix} 
        a_t - \frac{1}{2}d_{\Sigma}(t) \\ 
        c_t - \frac{1}{2} \Xi\Xi_t'  \\ 
        a^E_t 
    \end{pmatrix}
    + \begin{pmatrix} A_t \\ C_t \\ A^E_t \end{pmatrix} X_t$
    \\
    &&&&
    \\
    & $\Sigma^Y(t, F_t)$ 
        = $\begin{pmatrix} \Lambda^f(t,F_t)
        \\ \Sigma (t,F_t) 
        \\ \Xi(t,F_t) 
        \\ \Sigma^E (t,F_t)) \end{pmatrix}$
    & $\Sigma$
    & $\begin{pmatrix} \Sigma \\ \Sigma^E_t \end{pmatrix}$
    & $\begin{pmatrix} \Sigma_t \\ \Xi_t \\ \Sigma^E_t) \end{pmatrix}$
    \\
\hline
\end{tabular}%
}
\end{center}
\caption{Correspondence between our model and existing risk-sensitive investment models with partial observation.}
\label{table:parms:mapping}
\end{table}

\newpage

\section{Proof of Proposition \ref{prop:P_h:unique} }\label{app:proofs:controlpb:equival}

\textit{Proof of} $i)$. We start by proving that condition \eqref{def:control:class:AZ:Kaza1} in Definition \ref{def:control:class:AZ} implies condition \eqref{def:control:class:AX:Kaza1} in  Definition \ref{def:control:class:AX}. 
    
    By Lemma \ref{lem:Wiener:restricted}, 
    $
        d\tilde{W}_t = \Sigma^{Y'}\left(t, Y_t\right) \left(\Sigma^Y \Sigma^{Y'}\left(t, Y_t\right)\right)^{-1} \left( \Sigma^Y\left(t, Y_t\right) dW_t + a^Y(t,X_t,Y_t) - \hat{a}^Y(t,Z_t) dt \right),
    $
    thus, condition \eqref{def:control:class:AZ:Kaza1} implies that
    {\small
    \begin{align}\label{eq:prop:P_h:unique:interm1}
        & \mathbf{E} \left[ \exp \left\{ -\frac{\theta}{2} \int_{0}^{t}\left(h_s' \Sigma^{(1)}\left(s, Y_s\right) - \Xi\left(s, Y_s\right) \right)d \tilde{W}_s\right\} \right] 
                                                                    \nonumber\\
        =& \mathbf{E} \left[ \exp \left\{ -\frac{\theta}{2} \int_{0}^{t}\left(h_s' \Sigma^{(1)}\left(s, Y_s\right) - \Xi\left(s, Y_s\right) \right) 
        \right.\right.                                               \nonumber\\
        & \left.\left.\times   
        \left( \Sigma^{Y'}\left(s, Y_s\right) \left(\Sigma^Y \Sigma^{Y'}\left(s, Y_s\right)\right)^{-1} \left( \Sigma^Y\left(s, Y_s\right) dW_t + a^Y(s,X_s,Y_s) - \hat{a}^Y(s,Z_s) ds \right)\right)\right\} \right] < \infty.        
    \end{align}
    }
    
    Next, we simplify the expression inside the expectation at \eqref{eq:prop:P_h:unique:interm1}. Let $\tilde{h}_t $ be the $m^Y$-element vector process according to Notation \ref{N.2}. Define also the $m^Y$-element column indicator vector
    $
    \mathbf{1}_{\Xi} =
    \begin{pmatrix}
        0_{\ell + m}    &
        1               &
        0_{k},
    \end{pmatrix}'
    $,
    that is, $\mathbf{1}_{\Xi}$ is a vector with entry $(\ell + m + 1)$ set to 1 and all other entries set to 0. Note that $\mathbf{1}_{\Xi}'\Sigma^{Y}_t  = \Xi_t$. Then, we express \eqref{eq:prop:P_h:unique:interm1} as:
    {\small
    \begin{align}
        & \mathbf{E} \left[ \exp \left\{ -\frac{\theta}{2} \int_{0}^{t}\left(\tilde{h}_s'\Sigma^{Y}\left(s, Y_s\right)  - \mathbf{1}_{\Xi}'\Sigma^{Y}\left(s, Y_s\right) \right)
        \right.\right.                                               \nonumber\\
        & \left.\left.\times   
        \left( \Sigma^{Y'}\left(s, Y_s\right) \left(\Sigma^Y \Sigma^{Y'}\left(s, Y_s\right)\right)^{-1} \left( \Sigma^Y\left(s, Y_s\right) dW_t + a^Y(s,X_s,Y_s) - \hat{a}^Y(s,Z_s) ds \right)\right)\right\} \right]
                                                                    \nonumber\\
        =& \mathbf{E} \left[ \exp \left\{ -\frac{\theta}{2} \int_{0}^{t}\left(\tilde{h}_s'  - \mathbf{1}_{\Xi}' \right)
        \cancel{\left(\Sigma^{Y}\Sigma^{Y'}\left(s, Y_s\right)\right)}\cancel{\left(\Sigma^Y \Sigma^{Y'}\left(s, Y_s\right)\right)^{-1}}
                \right.\right.                                               \nonumber\\
        & \left.\left.\times  
        \left( \Sigma^Y\left(s, Y_s\right) dW_t + a^Y(s,X_s,Y_s) - \hat{a}^Y(s,Z_s) ds \right)\right\} \right]
                                                                    \nonumber\\
        =& \mathbf{E} \left[ \exp \left\{ -\frac{\theta}{2} \int_{0}^{t}\left(h_s' \Sigma^{(1)}\left(s, Y_s\right) - \Xi\left(s, Y_s\right) \right) dW_s \right\}
        \right.                                               \nonumber\\
        & \left.\times  
        \exp \left\{ -\frac{\theta}{2} \int_{0}^{t}\left(\tilde{h}_s'  - \mathbf{1}_{\Xi}' \right) 
        \left(a^Y(s,X_s,Y_s) - \hat{a}^Y(s,Z_s) ds \right)\right\} \right] < \infty.
                                                                    \nonumber
        \end{align}
    }
    
    The first exponential is finite as long as the second exponential is strictly positive.  Recall that $a^Y$ is $C^{1,1,1}_b$ on $[0,T] \times \mathbb{R}^n \times \mathbb{R}^m_Y$, so the conditional expectation $\hat{a}$ is also bounded, and
    {\small
    \begin{align}
        \exp \left\{ -\frac{\theta}{2} \int_{0}^{t}\left(h_s'  - \mathbf{1}_{\Xi}' \right) 
        \left(a^Y(s,X_s,Y_s) - \hat{a}^Y(s,Z_s) ds \right)\right\} > 0
        \qquad \forall (s,x,y) \text{ a.s.}
    \end{align}
    }
    Consequently, 
    $
        \mathbf{E} \left[ \exp \left\{ -\frac{\theta}{2} \int_{0}^{t}\left(h_s' \Sigma^{(1)}\left(s, Y_s\right) - \Xi\left(s, Y_s\right) \right) dW_s \right\}
        \right] < \infty,      
    $    
    and the Kazamaki condition \eqref{def:control:class:AX:Kaza1} in Definition \ref{def:control:class:AX} holds. Similar reasoning, starting from condition \eqref{def:control:class:AX:Kaza1} in  Definition \ref{def:control:class:AX} and applying Lemma \ref{lem:Wiener:restricted} shows that the converse is true. Therefore, we have proved the equivalence of conditions \eqref{def:control:class:AZ:Kaza1} and \eqref{def:control:class:AX:Kaza1}.  The proof of the equivalence of conditions \eqref{def:control:class:AZ:Kaza2}  and  \eqref{def:control:class:AX:Kaza2} proceeds analogously.

\noindent\textit{Proof of} $ii)$. Under the measure $\mathbb{P}^h$ defined at \eqref{def:Ph}, the $\mathcal{F}_t$-standard Wiener process is (see \eqref{eq:Ph:BM:Wh:X})   
    {\small
    \begin{align*}
        W^{h}_t 
        := W_t 
        + \theta \int_{0}^t \left(\Sigma^{(1)'}\left(s, Y_s\right) h_s - \Xi'\left(s, Y_s\right) \right) ds,
    \end{align*}
    }
    for $h_t \in \mathcal{A}^X(T)$. By Lemma \ref{lem:Wiener:restricted}, there exists a $\left(\mathbb{P}^h,\mathcal{F}^Y_t\right)$ $d$-dimensional standard Wiener process $\tilde{W}^X_t$ such that
    $
        \Sigma^Y\left(t, Y_t\right) d\tilde{W}_t^X 
        = \left(a^Y(t,X_t,Y_t) - \hat{a}^Y(t,Z_t) \right)dt + \Sigma^Y\left(t, Y_t\right) dW^h_t.
    $
    Hence,
    {\small
    \begin{align}\label{eq:P_h:unique:eq1}
        \Sigma^Y\left(t, Y_t\right) d\tilde{W}_t^X 
        =& \left(a^Y(t,X_t,Y_t) - \hat{a}^Y(t,Z_t) \right)dt + \Sigma^Y\left(t, Y_t\right) dW_t 
        + \theta \Sigma^Y\left(t, Y_t\right)\left(\Sigma^{(1)}\left(t, Y_t\right) h_t - \Xi\left(t, Y_t\right) \right)'.
    \end{align}
    }
    However, applying Lemma \ref{lem:Wiener:restricted} under the measure $\mathbb{P}$ also tells us that 
    {\small
    \begin{align}   
        \Sigma^Y\left(t, Y_t\right) dW_t  
        = \left(\hat{a}^Y(t,Z_t) - a^Y(t,X_t,Y_t) \right)dt + \Sigma^Y\left(t, Y_t\right) d\tilde{W}_t   ,                              \nonumber\\
    \end{align}
    }    
    so \eqref{eq:P_h:unique:eq1} becomes 
    {\small
    \begin{align}
        \Sigma^Y\left(t, Y_t\right) d\tilde{W}^X_t 
        &= \Sigma^Y\left(t, Y_t\right) d\tilde{W}_t + \theta \Sigma^Y\left(t, Y_t\right)\left(\Sigma^{(1)'}\left(t, Y_t\right) h_t - \Xi'\left(t, Y_t\right) \right),
    \end{align}
    }
    which is the definition of the standard $(\hat{\mathbb{P}}^h, \mathcal{F}^Y_t)$-Wiener process $\tilde{W}_t^h$ at \eqref{eq:Ph:BM:What}. Hence, $\tilde{W}_t^X = \tilde{W}_t^h$, so $\tilde{W}_t^h$ is also the restriction of the standard $(\mathbb{P}^h, \mathcal{F}_t)$-Wiener process $W_t^h$ to the filtration $\mathcal{F}_t^Y$. As Lemma \ref{lem:Wiener:restricted}, affects only the filtration but not the measure, we conclude that the measures $\mathbb{P}^h$ defined via \eqref{def:Ph} and $\hat{\mathbb{P}}^h$ defined via \eqref{def:Phhat} are identical.

\section{The MZE approach by Nagai and Peng}\label{app:MZE:NagaiPeng}

To recall the approach by Nagai and Peng we have to start from the problem formulation as in Subsection \ref{S.4.1} with the unobserved factor process $X_t$. \citet{na00} finds it convenient to have $Y_t$ a martingale by passing to a new measure $\bar{\mathbb{P}}$. Assuming $h_t \in \mathcal{A}_X(T)$, define $\bar{\mathbb{P}}$ via the Radon-Nikodym derivative $\bar\chi_t^X  = \frac{d\bar{\mathbb{P}}}{d\mathbb{P}^h} \Big{|}_{\mathcal{F}_t}$, where
\begin{multline}\label{def:PbarX}
    \bar\chi_t^X 
    := \exp \left\{ 
        - \int_0^t a^{\dagger}(s, X_s,Y_s; h_s)' \left( \Sigma^Y\Sigma^{Y'}(s,Y_s)\right)^{-1}\Sigma^Y\left(s, Y_s\right) dW^h_s
                        \right. \\
       \left.
       - \frac{1}{2}\int_0^t a^{\dagger}(s, X_s,Y_s; h_s)' \left( \Sigma^Y \Sigma^{Y'}\left(s, Y_s\right)\right)^{-1} a^\dagger(s, X_s,Y_s; h_s) ds \right\}
\end{multline}
with $a^\dagger$ defined at \eqref{eq:a_dag}, and for $t \in [0,T]$ so that the standard $(\bar{\mathbb{P}},\mathcal{F}_t)$-Wiener process is
\begin{align}\label{eq:Wiener:PbarX}
    \bar{W}_t 
    := W^{h}_t 
    + \int_{0}^t \Sigma^{Y'}\left(s, Y_s\right)\left( \Sigma^Y\Sigma^{Y'}\left(s, Y_s\right)\right)^{-1} a^\dagger(s, X_s,Y_s; h_s) ds
\end{align}
implying a corresponding $\bar{\mathbb{P}}$-dynamics for $(X_t, Y_t)$.  The risk-sensitive and exponentially transformed criteria is then
\begin{align}\label{eq:JbarX}
    \bar{J}(h;T,\theta,r_0) =  -\frac{1}{\theta} \ln \bar{\mathbf{E}} \left[ r_0^{-\theta} \exp \left\{ \theta \int_0^T g(t, X_t, Y_t, h_t;\theta) dt\right\} \Psi_T^X\right],
\end{align}
where $\Psi_t^X:=\left(\bar\chi_t^X\right)^{-1}$  is a martingale on $(\bar{\mathbb{P}}, \mathcal{F}_t)$, corresponding to the Radon-Nikodym derivative $\Psi_t^X = \frac{d\mathbb{P}^h}{d\bar{\mathbb{P}}} \Big{|}_{\mathcal{F}_t}$.
We also define the usual exponentially transformed criterion $\bar{I}(h;T,\theta,r_0) := e^{-\theta \bar{J}(h;T,\theta,r_0)}$.

By the Tower Property,
\begin{align}\label{eq:Ibar:TPX}
    \bar{I}(h;T,\theta,r_0) 
    &= r_0^{-\theta} \bar{\mathbf{E}} \left[ \bar{\mathbf{E}} \left[\exp \left\{ \theta \int_0^T g(t, X_t, Y_t, h_t;\theta) dt\right\} \Psi^X_T \mid \mathcal{F}^Y_T \right]\right],    
\end{align}

where $g(\cdot)$ is as in \eqref{eq:g}. 

\begin{definition}\label{def:qX}
For $h_t \in \mathcal{A}_X(T)$, and $\varphi(t,x;y): [0,T] \times \mathbb{R}^{n} \times \mathbb{R}^{m^Y} \to \mathbb{R}$ a $C_b^{1,2}$ test function indexed by $y$, set
\begin{align}\label{eq:qX}
    q_X^h(t)(\varphi_t) := \bar{\mathbf{E}} \left[e^{ \theta \int_0^t g(s,X_s, Y_s,h_s;\theta)ds} \Psi^X_t \varphi(t, X_t; Y_t) \mid \mathcal{F}^Y_t \right].
\end{align}
\end{definition}
From this definition, we immediately see that
$
    \bar{I}(h;T,\theta,r_0) 
    = r_0^{-\theta} \bar{\mathbf{E}} \left[ q_X^h(T)(1) \right].  
$ \citet{na00} now proves the following

\begin{theorem}[Classical formulation of the Modified Zakai Equation (Nagai 2001)]\label{theo:Zakai:X}
The operator $q_X^h(t)(\varphi_t)$ in Definition \ref{def:qX} satisfies the stochastic partial differential equation (SPDE)
\begin{align}\label{eq:Zakai:SPDE:X}
    & q_X^h(t)\left(\varphi(t)\right)
                                                                    \nonumber\\
    =& q_X^h(0)\left(\varphi(0)\right)
                                                                    \nonumber\\
    &   + \int_0^t q_X^h(s)\left(\frac{\partial \varphi}{\partial t}(s,X_s;Y_s)
            + L_X^h\varphi(s,X_s;Y_s)
            + \theta g(s,X_s,Y_s,h_s;\theta)\varphi(s,X_s;Y_s)\right)ds
                       \nonumber\\            
    &   + \int_0^t q_X^h(s)\left( L_Y^h \varphi(s,X_s;Y_s)\right) dY_s,
\end{align}
where 
\begin{align}\label{eq:operator:Lh:X}
    L_X^h\varphi(t,x;y) 
    :=& \frac{1}{2} \tr \left\{ \Lambda\Lambda'(t,x;y) D^2\varphi \right\}
    + \left[b(t,x;y) 
        - \theta \Lambda(t,x;y) \left(\Sigma^{(1)'}\left(t, y\right) h - \Xi'\left(t, y\right) \right)
        \right]' D\varphi
                                                                    \nonumber\\
    =& \frac{1}{2} \sum_{i,j=1}^n \left\{\Lambda\Lambda'(t,x;y)\right\}_{ij} \frac{\partial^2 \varphi}{\partial x_i \partial x_j}
                                                                    \nonumber\\
    &+ \sum_{i=1}^n\left[b(t,x;y) 
        - \theta \Lambda(t,x;y) \left(\Sigma^{(1)'}\left(t, y\right) h - \Xi'\left(t, y\right) \right)
        \right]_i \frac{\partial \varphi}{\partial x_i}
\end{align}
is the $\mathbb{P}^h$-generator related to the process $X_t$, $D\varphi = \begin{pmatrix} \frac{\partial \varphi}{\partial x_1} & \ldots & \frac{\partial \varphi}{\partial x_i} & \ldots & \frac{\partial \varphi}{\partial x_n} \end{pmatrix}$ and $D^2\varphi = \left[ \frac{\partial^2 \varphi}{\partial x_i x_j} \right], i,j = 1, \ldots, n$. The operator $L_Y^h\varphi(t,x;y)$ is defined in vector form as
\begin{align}\label{eq:operator:Lh:Y}
    L_Y^h\varphi(t,x;y) 
    :=& \left[ a^Y(t, x; y) - \theta \Sigma^Y\left(t; y\right) \left(\Sigma^{(1)'}\left(t; y\right) h - \Xi'\left(t; y\right) \right)\right]' 
        \left( \Sigma^Y\Sigma^{Y'}\left(t, y\right)\right)^{-1} \varphi(t,x;y)
                       \nonumber\\            
    &  + D'\varphi(t,x;y) \Lambda(t,x;y) \Sigma^{Y'}\left(t; y\right) \left(\Sigma^Y \Sigma^{Y'}\left(t, y\right)\right)^{-1},
\end{align}
or element-by-element as:
\begin{align}
    \left\{L_Y^h \varphi(t,x;y)\right\}_i 
    :=& \left\{\left[ a^Y(t, x; y) - \theta \Sigma^Y\left(t; y\right) \left(\Sigma^{(1)'}\left(t; y\right) h - \Xi'\left(t; y\right) \right)\right]' 
        \left( \Sigma^Y\Sigma^{Y'}\left(t, y\right)\right)^{-1}\right\}_i \varphi(t,x;y)
                       \nonumber\\         
    &+ \sum_{j=1}^n \frac{\partial \varphi(t,x;y)}{\partial x_j} \left\{\Lambda(t,x;y)\Sigma^{Y'}\left(t; y\right) \left(\Sigma^Y \Sigma^{Y'}\left(t, y\right)\right)^{-1}\right\}_{ji}                    
    \qquad i = 1, \ldots, m^Y.
\end{align}
\end{theorem}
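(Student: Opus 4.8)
I would follow Nagai's derivation of the Zakai equation, adapted to the exponentially weighted functional, treating the $y$-argument of $\varphi$ (and of the coefficients) as a parameter that is not differentiated and is evaluated at $Y_t$, consistently with the operators in \eqref{eq:operator:Lh:X}--\eqref{eq:operator:Lh:Y} and with the convention already used in Theorem \ref{theo:Zakai:Z}. Set $\Gamma_t:=\exp\{\theta\int_0^t g(s,X_s,Y_s,h_s;\theta)\,ds\}$, so that $q_X^h(t)(\varphi_t)=\bar{\mathbf{E}}[\Gamma_t\Psi^X_t\,\varphi(t,X_t;Y_t)\mid\mathcal{F}^Y_t]$. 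The first step is to record the $\bar{\mathbb{P}}$-dynamics of the three factors: from the construction of $\bar{\mathbb{P}}$ and $\bar W_t$ in \eqref{def:PbarX}--\eqref{eq:Wiener:PbarX}, the observation is a $\bar{\mathbb{P}}$-martingale, $dY_t=\Sigma^Y(t,Y_t)\,d\bar W_t$, while
\begin{align*}
dX_t=\Big[\,b-\theta\Lambda(\Sigma^{(1)'}h-\Xi')-\Lambda\Sigma^{Y'}(\Sigma^Y\Sigma^{Y'})^{-1}a^{\dagger}\,\Big](t,X_t,Y_t)\,dt+\Lambda(t,X_t,Y_t)\,d\bar W_t.
\end{align*}
Furthermore $\Psi^X_t=(\bar{\chi}^X_t)^{-1}=\frac{d\mathbb{P}^h}{d\bar{\mathbb{P}}}\big|_{\mathcal{F}_t}$ solves the linear SDE $d\Psi^X_t=\Psi^X_t\,(a^{\dagger})'(t,X_t,Y_t;h_t)(\Sigma^Y\Sigma^{Y'})^{-1}\,dY_t$, and $d\Gamma_t=\Gamma_t\,\theta g(t,X_t,Y_t,h_t;\theta)\,dt$ is of finite variation. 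A preliminary point is to check, using the boundedness of $g$ and $\varphi$ and the Kazamaki conditions of Definition \ref{def:control:class:AX} (which, by Remark \ref{R.4}, make the relevant Dol\'eans exponentials — and one verifies $\Psi^X$ — true martingales), that $q_X^h(t)(\varphi_t)$ is well defined and that the stochastic integrals below are genuine martingales.

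The second step is Itô's product rule for $N_t:=\Gamma_t\Psi^X_t\,\varphi(t,X_t;Y_t)$ under $\bar{\mathbb{P}}$. Since $\Gamma$ is of finite variation, the only cross variation that contributes is $d\langle\Psi^X,\varphi\rangle_t$; writing the $\bar{\mathbb{P}}$-martingale part of $\varphi(t,X_t;\cdot)$ as $D\varphi\,\Lambda\,d\bar W_t$, this cross variation equals $\Gamma_t\Psi^X_t\,D\varphi\,\Lambda\Sigma^{Y'}(\Sigma^Y\Sigma^{Y'})^{-1}a^{\dagger}\,dt$, which cancels exactly the contribution $-\,\Gamma_t\Psi^X_t\,D\varphi\,\Lambda\Sigma^{Y'}(\Sigma^Y\Sigma^{Y'})^{-1}a^{\dagger}\,dt$ produced by the Girsanov drift of $X$; the net effect is that the $\bar{\mathbb{P}}$-generator of $X$ gets replaced by the $\mathbb{P}^h$-generator $L_X^h$ of \eqref{eq:operator:Lh:X}. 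Collecting terms gives $N_t=N_0+\int_0^t A_s\,ds+\int_0^t B_s\,d\bar W_s$ with $A_s=\Gamma_s\Psi^X_s\big(\partial_t\varphi+L_X^h\varphi+\theta g\,\varphi\big)(s,X_s;Y_s)$ and $B_s=\Gamma_s\Psi^X_s\big[\,\varphi\,(a^{\dagger})'(\Sigma^Y\Sigma^{Y'})^{-1}\Sigma^Y+D\varphi\,\Lambda\,\big](s,X_s;Y_s)$, and one verifies directly from \eqref{eq:operator:Lh:Y} that $B_s\,\Sigma^{Y'}(\Sigma^Y\Sigma^{Y'})^{-1}=\Gamma_s\Psi^X_s\,L_Y^h\varphi(s,X_s;Y_s)$.

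The third and final step is to take $\bar{\mathbf{E}}[\,\cdot\mid\mathcal{F}^Y_t]$ in the decomposition of $N_t$. A conditional Fubini argument handles the drift part, $\bar{\mathbf{E}}\big[\int_0^t A_s\,ds\mid\mathcal{F}^Y_t\big]=\int_0^t\bar{\mathbf{E}}[A_s\mid\mathcal{F}^Y_s]\,ds=\int_0^t q_X^h(s)\big(\partial_t\varphi+L_X^h\varphi+\theta g\,\varphi\big)\,ds$; for the martingale part one invokes the Fujisaki--Kallianpur--Kunita filtering representation under $\bar{\mathbb{P}}$, noting that since $Y$ is already a $\bar{\mathbb{P}}$-martingale with $dY_t=\Sigma^Y\,d\bar W_t$ its innovation coincides with $Y$ up to the $(\Sigma^Y\Sigma^{Y'})^{-1/2}$ normalization, so that the optional projection of $\int_0^t B_s\,d\bar W_s$ onto $\mathcal{F}^Y_t$ is $\int_0^t\bar{\mathbf{E}}\big[B_s\,\Sigma^{Y'}(\Sigma^Y\Sigma^{Y'})^{-1}\mid\mathcal{F}^Y_s\big]\,dY_s=\int_0^t q_X^h(s)(L_Y^h\varphi)\,dY_s$; adding the two pieces reproduces exactly \eqref{eq:Zakai:SPDE:X}. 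I expect the main obstacle to be this last step: making rigorous the exchange of the conditional expectation with the Lebesgue and stochastic integrals and establishing the FKK representation in the present weighted setting under only Kazamaki (rather than Novikov) integrability, which will require either a suitable version of the FKK/innovations theorem together with a localization and uniform-integrability argument controlling the weights $\Gamma$ and $\Psi^X$, or a direct reworking of the innovations computation for the pair $(X,Y)$ under $\bar{\mathbb{P}}$; the $\bar{\mathbb{P}}$-dynamics, the product-rule bookkeeping, and the cancellation yielding $L_X^h$ are by contrast routine.
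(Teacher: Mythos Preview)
Your proposal is correct and follows exactly the standard derivation due to Nagai. Note, however, that the paper does not supply its own proof of this theorem: it is stated in Appendix~\ref{app:MZE:NagaiPeng} as a result of \citet{na00} (``\citeauthor{na00} now proves the following''), with existence and uniqueness delegated to \citet{Bensoussan1992} via Remark~\ref{rk:SPDE:uniqueness}. The paper's only in-house proof of a Zakai-type equation is that of Theorem~\ref{theo:Zakai:Z} in Appendix~\ref{app:proof:Zakai:Z}, which is the separated-problem analogue; there the argument is precisely your first two steps (It\^o product rule on $e^{\theta\int_0^t \hat g_s ds}\Psi_t\varphi$, identification of the cross-variation term that promotes the $\bar{\mathbb{P}}$-generator to $\hat L^h$, then integration), but the third step collapses to an \emph{unconditional} expectation and Fubini because $q_Z^h$ is deterministic---no FKK projection is needed. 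Your identification of the FKK/innovations step as the technically delicate part, absent in the $Z$-version, is exactly right and is what distinguishes Nagai's original SPDE from the paper's PDE.
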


\begin{remark}\label{rk:SPDE:operators}
Note that although $q_X^h$ is defined as an expectation under the measure $\bar{\mathbb{P}}$, the drift in the SPDE of Theorem \ref{theo:Zakai:X} depends on the $\mathbb{P}^h$-operator $ L^h_X$. How did we get there? The derivation starts with the $\bar{\mathbb{P}}$-operator of $X_t$. This operator then acquires an extra term through the cross variation $d \langle \Psi^X, \varphi \rangle_t$, yielding the $\mathbb{P}^h$-operator $L^h_X$.
\end{remark}

\begin{remark}\label{rk:Zakai_as_a_standard_SPDE}
We can express the conditional probability $q_X^h(t)(\varphi_t)$ in terms of its conditional density $q_X^h(t,x;y)$ as
\begin{align}
    q_X^h(t)\left(\varphi(t)\right) = \int q_X^h(t,x;y) e^{ \theta \int_0^t g(s,x, y,h;\theta)ds} \varphi(t,x;y) dx,
\end{align}
where the $q_X^h(t,x;y)$ density is with respect to the Lebesgue measure. By standard arguments, we can show that the conditional density $q_X^h(t)$ satisfies the stochastic partial differential equation (SPDE):
\begin{align}\label{eq:Zakai:SPDE:X:SPDE}
    dq_X^h(t) - L_X^{h*}q_X^h(t)dt - L_Y^{h*}q_X^h(t) dY_t = 0
\end{align}
where the operators $L_X^{h*}$ and $L_Y^{h*}$ are the adjoints of the operators $L_X^{h}$ and $L_Y^{h}$ respectively, and thus defined elementwise as:
\begin{align}\label{eq:operator:Lh:X:adjoint}
& L_X^{h*}f(t,x;y) 
                        \nonumber\\
    &:= \frac{1}{2} \sum_{i,j=1}^n  \frac{\partial^2 }{\partial x_i \partial x_j} \left( \left\{ \Lambda\Lambda'(t,x;y)\right\}_{ij} f(t,x;y) \right)
                        \nonumber\\
    &- \sum_{i=1}^n \frac{\partial }{\partial x_i}\left( \left\{b(t,x;y)
        - \theta \Lambda(t,x;y) \left(\Sigma^{(1)'}(t;y)\ h - \Xi'(t;y) \right)
        \right\}_i f(t,x;y) \right)
\end{align}
and
\begin{align}\label{eq:operator:Lh:Y:adjoint}    
& \left\{L_Y^{h*} f(t,x;y)\right\}_i 
                        \nonumber\\
    &:= \left\{\left[ a^Y(t, x; y) - \theta \Sigma^Y\left(t; y\right) \left(\Sigma^{(1)'}\left(t; y\right) h - \Xi'\left(t; y\right) \right)\right]' 
        \left( \Sigma^Y\Sigma^{Y'}\left(t, y\right)\right)^{-1}\right\}_i f(t,x;y)
                       \nonumber\\         
    &- \sum_{j=1}^n \frac{\partial }{\partial x_j} \left( \left\{\Lambda(t,x;y)\Sigma^{Y'}\left(t; y\right) \left(\Sigma^Y \Sigma^{Y'}\left(t, y\right)\right)^{-1}\right\}_{ji} f(t,x;y) \right),                   
    \quad i = 1, \ldots, m^Y.
\end{align}
\end{remark}

\begin{remark}\label{rk:SPDE:uniqueness}
\cite{na00} addresses the important questions of the existence and uniqueness of the solution in two short remarks, located respectively after the proofs of Proposition 2.1 and Theorem 2.1. We also refer the reader to Sections 4.5 and 4.2.2 in \cite{Bensoussan1992} for a thorough treatment of these questions. Section 4.2.2 therein formally constructs the functional space in which a solution is sought. Theorem 4.2.1. then proves the existence and uniqueness of a solution in this functional space under the assumption that the observation and factor noise are uncorrelated. Section 4.5 extends these results to account for correlation and is, therefore, particularly relevant to our treatment. 
\end{remark}

So far the filter for $X_t$ did not intervene. Nagai and Peng show that in the linear-Gaussian case with the Kalman filter,
one can obtain an explicit analytic solution to the MZE in \eqref{eq:Zakai:SPDE:X} that exploits the specificity of the Kalman Filter. Their result can be adapted to the more general case of nonlinear, but Gaussian, dynamics where the filter is the Kalman filter applied after linearizing the nonlinear coefficients (\textit{Extended Kalman Filter}).

\section{Proof of Proposition \ref{prop:GKSF}}\label{app:proof:GKSF}

First, we show that $\bar{\mathbf{E}} \left[\Psi^X_t \mid \mathcal{F}^Y_t \right] \neq 0$ $\mathbb{P}$-a.s. so the right-hand side of \eqref{eq:GKSF} is well-defined. To see this, notice that $\Psi^X_t \geq 0$ and
$
\bar{\mathbf{E}} \left[1_{\left\{ \Psi^X_t = 0 \right\}} \Psi^X_t \right] =0
$. Recall that $\Psi^X_t = \frac{d\mathbb{P}^h}{d\bar{\mathbb{P}}}\Big{|}_{\mathcal{F}_t}$, then 
$
    \bar{\mathbf{E}} \left[1_{\left\{ \Psi^X_t = 0 \right\}} \Psi^X_t \right] 
    = \mathbf{E}^h \left[1_{\left\{ \Psi^X_t = 0 \right\}} \right]
    = \mathbb{P}^h\left[ \Psi^X_t  = 0 \right],
$
so
$
    \mathbb{P}^h\left[ \Psi^X_t  = 0 \right] = 0.
$
Hence, $\Psi^X_t > 0$ $\mathbb{P}^h$-a.s. which proves our assertion.

Next, we prove \eqref{eq:GKSF}, or equivalently,
\begin{align*}
   & \bar{\mathbf{E}} \left[ \varphi(t,X_t;Y_t) e^{\theta \int_0^t    
     \mathbf{E}^h \left[e^{\theta \int_0^t g(s, X_s, Y_s, h_s;\theta) ds} \mid \mathcal{F}^Y_t \right] ds} \Psi^X_t \mid \mathcal{F}^Y_t \right]\\
    &\hspace{4cm}= 
    \mathbf{E}^h \left[ \varphi(t,X_t;Y_t) e^{\theta \int_0^t g(s, X_s, Y_s, h_s;\theta) ds} \mid \mathcal{F}^Y_t \right]
    \bar{\mathbf{E}} \left[\Psi^X_t \mid \mathcal{F}^Y_t \right]
    \quad \bar{\mathbb{P}}-a.s.
\end{align*}

Both sides are $\mathcal{F}^Y_t$-measurable, so this is equivalent to showing that for any bounded, $\mathcal{F}^Y_t$-measurable random variable $\xi_t$,
\begin{align*}
   & \bar{\mathbf{E}} \left[ \mathbf{E}^h \left[ \varphi(t,X_t;Y_t) e^{\theta \int_0^t     \mathbf{E}^h \left[e^{\theta \int_0^t g(s, X_s, Y_s, h_s;\theta) ds} \mid \mathcal{F}^Y_t \right] ds} \mid \mathcal{F}^Y_t \right]
    \bar{\mathbf{E}} \left[\Psi^X_t \mid \mathcal{F}^Y_t \right] \xi_t \right]\\
&\hspace{2cm}=\bar{\mathbf{E}} \left[ \bar{\mathbf{E}} \left[ \varphi(t,X_t;Y_t) e^{\theta \int_0^t     \mathbf{E}^h \left[ e^{\theta \int_0^t g(s, X_s, Y_s, h_s;\theta) ds} \mid \mathcal{F}^Y_t \right] ds} \Psi^X_t \mid \mathcal{F}^Y_t \right] \xi_t \right]
\end{align*}

To show this equality, we start under the measure $\mathbb{P}^h$. By the Tower property,
\begin{align*}
    &\mathbf{E}^h \left[
        \mathbf{E}^h \left[ \varphi(t,X_t;Y_t) e^{\theta \int_0^t     \mathbf{E}^h \left[  e^{\theta \int_0^t g(s, X_s, Y_s, h_s;\theta) ds} \mid \mathcal{F}^Y_t \right] ds} \mid \mathcal{F}^Y_t \right]
        \xi_t    
    \right]\\
    &\hspace{2cm}= 
    \mathbf{E}^h \left[
        \varphi(t,X_t;Y_t) e^{\theta \int_0^t     \mathbf{E}^h \left[ e^{\theta \int_0^t g(s, X_s, Y_s, h_s;\theta) ds} \mid \mathcal{F}^Y_t \right] ds} \xi_t
    \right]
\end{align*}
Writing this relation under the measure $\bar{\mathbb{P}}$, we get
\begin{align*}
    & \bar{\mathbf{E}} \left[
        \mathbf{E}^h \left[ \varphi(t,X_t;Y_t) e^{\theta \int_0^t     \mathbf{E}^h 
        \left[  e^{\theta \int_0^t g(s, X_s, Y_s, h_s;\theta) ds} \mid \mathcal{F}^Y_t \right] ds} \mid \mathcal{F}^Y_t \right]
        \xi_t \Psi^X_t     \right]\\
   &\hspace{2cm} = 
    \bar{\mathbf{E}} \left[
        \varphi(t,X_t;Y_t) e^{\theta \int_0^t     \mathbf{E}^h \left[e^{\theta \int_0^t g(s, X_s, Y_s, h_s;\theta) ds} \mid \mathcal{F}^Y_t \right] ds}
        \xi_t \Psi^X_t    
    \right].
\end{align*}

By the Tower Property and the assumption that $\xi_t$ is $\mathcal{F}^Y_t$-measurable, we conclude that 
\begin{align*}
    & \bar{\mathbf{E}} \left[
        \mathbf{E}^h \left[ \varphi(t,X_t;Y_t) e^{\theta \int_0^t     \mathbf{E}^h \left[ e^{\theta \int_0^t g(s, X_s, Y_s, h_s;\theta) ds} \mid \mathcal{F}^Y_t \right] ds} \mid \mathcal{F}^Y_t \right] 
        \bar{\mathbf{E}} \left[\Psi^X_t \mid \mathcal{F}^Y_t \right]
        \xi_t 
    \right]\\
  &\hspace{2cm}  = 
    \bar{\mathbf{E}} \left[
        \bar{\mathbf{E}} \left[ \varphi(t,X_t;Y_t) e^{\theta \int_0^t     \mathbf{E}^h \left[  e^{\theta \int_0^t g(s, X_s, Y_s, h_s;\theta) ds} \mid \mathcal{F}^Y_t \right] ds} \Psi^X_t \mid \mathcal{F}^Y_t \right]
        \xi_t
    \right],
\end{align*}
which proves the result $\bar{\mathbb{P}}$-a.s.

\section{Proof of Theorem \ref{theo:Zakai:Z}}\label{app:proof:Zakai:Z}

Inspired by \citet{na00} we start from the Ito differential
{\small
\begin{align*}
& d\left( e^{\theta \int_0^t \hat g_s ds} \Psi_t \varphi(t,\zeta_t;Y_t) \right)
                                            \nonumber\\
= \phantom{} & d\left(e^{\theta \int_0^t \hat g_s ds}\right) \Psi_t \varphi(t,\zeta_t;Y_t)
+   e^{\theta \int_0^t g_s ds} d\Psi_t \varphi(t,\zeta_t;Y_t)
+   e^{\theta \int_0^t \hat g_s ds} \Psi_t d\varphi(t,\zeta_t;Y_t)
+   e^{\theta \int_0^t \hat g_s ds} d \langle \Psi, \varphi \rangle_t                               
                                                \nonumber\\
= \phantom{} &  \theta  e^{\theta \int_0^t \hat g_s ds} \Psi_t \varphi(t,\zeta_t;Y_t) \,\hat g_t\, dt
+  e^{\theta \int_0^t \hat g_s ds} \Psi_t \varphi(t,\zeta_t;Y_t) \check{a}^{Y'}(t,Z_t;h_t) \left( \Sigma^Y\Sigma^{Y'}\left(t, Y_t\right)\right)^{-1} dY_t
                                                \nonumber\\
&+ e^{\theta \int_0^t \hat g_s ds} \Psi_t \left( \frac{\partial \varphi}{\partial t}(t,\zeta_t;Y_t) + \bar{L}\varphi(t,\zeta_t;Y_t) \right)dt
+ e^{\theta \int_0^t \hat g_s ds} \Psi_t D'\varphi(t,\zeta_t;Y_t) H'(t, Z_t) dY_t
                                \nonumber\\
&+   e^{\theta \int_0^t \hat g_s ds} \Psi_t \check{a}^{Y'}(t,Z_t;h_t) H'(t, Z_t) D\varphi(t,\zeta_t;Y_t) dt
\end{align*}
}

where $D\varphi = \frac{\partial \varphi}{\partial \zeta}$. Integrating and taking the expectation, we get
{\small
\begin{align*}
&   \bar{\mathbf{E}} \left[  e^{\theta \int_0^t g_s ds} \Psi_t \varphi(t,\zeta_t;Y_t) \right] 
- \varphi(0,\zeta_0;Y_0)
                                \nonumber\\
=&  \bar{\mathbf{E}} \left[ \int_0^t
 e^{\theta \int_0^s \hat g_u du} \Psi_s \left( 
        \frac{\partial \varphi}{\partial s}(s,\zeta_s;Y_s) 
        + \bar{L}\varphi(s,\zeta_s;Y_s) 
        + \check{a}^{Y'}(s,Z_s;h_s) H'(s, Z_s) D\varphi(t,Z_s)
        + \theta \hat g_s \varphi(s,\zeta_s;Y_s)\right)
ds\right].    
\end{align*}
}

Using Fubini, recalling the definitions of the operator $\hat L^h$, of $\check{a}$, of $\Psi_t$ as well as of $q_Z(\cdot)$, we end up with
{\small
\begin{align*}
q_Z^h(t)(\varphi_t) - q_Z^h(0)(\varphi_0)
&   =\bar{\mathbf{E}} \left[ e^{\theta \int_0^t \hat g_s ds} \Psi_t \varphi(t,\zeta_t;Y_t) \right] 
- \varphi(0,\zeta_0;Y_0)
                                \nonumber\\
&  =\bar{\mathbf{E}}  \left [ \int_0^t
[ e^{\theta \int_0^s \hat g_u du} \Psi_s \left( 
        \frac{\partial \varphi}{\partial t}(s,\zeta_s;Y_s) 
        + \hat L^h\varphi(s,\zeta_s;Y_s) 
        + \theta \hat g_s \varphi(s,\zeta_s;Y_s)\right)
ds\right]   \nonumber\\
&  =\int_0^t\bar{\mathbf{E}} \left[ e^{\theta \int_0^s \hat g_u du} \Psi_s \left( 
        \frac{\partial \varphi}{\partial t}(s,\zeta_s;Y_s) 
        + \hat L^h\varphi(s,\zeta_s;Y_s) 
        + \theta \hat g_s \varphi(s,\zeta_s;Y_s)\right) \right] \,ds \nonumber\\
&=  \int_0^t q_Z^h(s)\left( 
        \frac{\partial \varphi}{\partial t}(s,\zeta_s;Y_s) 
        + \hat L^h\varphi(s,\zeta_s;Y_s) 
        + \theta \hat g(s,Z_s,h_s;\theta) \varphi(s,\zeta_s;Y_s)\right) ds.
\end{align*}
}

%
%


\end{document}